\pgfplotsset{compat=1.12}
\definecolor{bostonuniversityred}{rgb}{0.7, 0.0, 0.0}
\definecolor{forestgreen}{rgb}{0.13, 0.45, 0.13}
\definecolor{blue-violet}{rgb}{0.5, 0.1, 0.8}
\definecolor{blue-new}{rgb}{0.2, 0.2, 0.8}
\definecolor{amber}{rgb}{1.0, 0.49, 0.0}
\definecolor{applegreen}{rgb}{0.55, 0.71, 0.0}
\newtheorem{theorem}{Theorem}[section]
\newtheorem{definition}{Definition}[section]
\newtheorem{lemma}[theorem]{Lemma}
\newtheorem{proposition}[theorem]{Proposition}
\newtheorem{corollary}[theorem]{Corollary}
\theoremstyle{definition}
\theoremstyle{definition}
\newtheorem{remark}[theorem]{Remark}
\numberwithin{equation}{section}
\def\thanks#1{\protected@xdef\@thanks{\@thanks
        \protect\footnotetext{#1}}}
\renewenvironment{abstract}[1]
{\list{}{\setlength{\leftmargin}{3em}
 \setlength{\rightmargin}{\leftmargin}}\item[]
\textbf{\abstractname.} #1\relax}
{\endlist}
\providecommand{\thanksbody}
{\hspace{-2em}
E. Mu\~noz-Hern\'andez is supported by the Research Grant PID2021-123343NB-I00 of the Ministry of Science, Technology, and Universities of Spain. 

E. Sovrano acknowledges the PRIN 2022 project \emph{Modeling, Control and
Games through Partial Differential Equations} (D53D23005620006),
funded by the European Union - Next Generation EU.

V. Taddei is supported by the Grant MIUR-PRIN 2020F3NCPX ``Mathematics for industry 4.0 (Math4I4)''.

E. Sovrano and V. Taddei are members of the \emph{Grup\-po Na\-zio\-na\-le per l'Anali\-si Ma\-te\-ma\-ti\-ca, la Pro\-ba\-bi\-li\-t\`{a} e le lo\-ro Appli\-ca\-zio\-ni} (GNAMPA) of the \emph{Isti\-tu\-to Na\-zio\-na\-le di Al\-ta Ma\-te\-ma\-ti\-ca} (INdAM) and acknowledge financial support from this institution. 

}
\title{\textbf{Coupled reaction-diffusion equations with degenerate diffusivity: wavefront analysis}}
\author{Eduardo Mu\~noz-Hern\'andez, Elisa Sovrano, Valentina Taddei
\thanks{\thanksbody}
}
\begin{document}

\newcommand{\Addresses}{
\bigskip
\small

\noindent Eduardo Mu\~noz-Hern\'andez\\
Universidad Complutense de Madrid,\\
Instituto de Matem\'atica Interdisciplinar (IMI),\\
Departamento de An\'alisis Matem\'atico y Matem\'atica Aplicada,\\
Plaza de las Ciencias 3, 28040 Madrid, Spain\\
ORCID id: 0000-0003-1184-6231\\
email: \href{mailto:eduardmu@ucm.es}{\texttt{eduardmu@ucm.es}}

\medskip

\noindent Elisa Sovrano\\
Dipartimento di Scienze e Metodi dell'Ingegneria,\\
Universit\`a degli Studi di Modena e Reggio Emilia,\\
Via G. Amendola 2, 42122 Reggio Emilia, Italy\\
email: \href{mailto:elisasovrano@unimore.it}{\texttt{elisasovrano@unimore.it}}

\medskip

\noindent Valentina Taddei\\
Dipartimento di Scienze e Metodi dell'Ingegneria,\\
Universit\`a degli Studi di Modena e Reggio Emilia,\\
Via G. Amendola 2, 42122 Reggio Emilia, Italy\\
email: \href{mailto:valentina.taddei@unimore.it}{\texttt{valentina.taddei@unimore.it}}

}

\maketitle
\thispagestyle{empty}

{\small{
\begin{abstract}
\noindent We investigate traveling wave solutions for a nonlinear system of two coupled reaction-diffusion equations characterized by double degenerate diffusivity:
\[n_t= -f(n,b), \quad b_t=[g(n)h(b)b_x]_x+f(n,b).\]
These systems mainly appear in modeling spatio-temporal patterns during bacterial growth.
Central to our study is the diffusion term $g(n)h(b)$, which degenerates at $n=0$ and $b=0$; and the reaction term $f(n,b)$, which is positive, except for $n=0$ or $b=0$. Specifically, the existence of traveling wave solutions composed by a couple of strictly monotone functions for every wave speed in a closed half-line is proved, and some threshold speed estimates are given. Moreover, the regularity of the traveling wave solutions is discussed in connection with the wave speed. 

\textbf{Mathematics Subject Classifications:} 35C07, 35K55, 35K57.

\textbf{Keywords:} degenerate diffusion, coupled reaction-diffusion equations, traveling wave solution, wave speed, sharp profile.
\end{abstract}}
}
\bigskip
\section{Introduction}\label{section-1}
In this paper, we investigate the system of coupled reaction-diffusion equations:
\begin{subnumcases}{\label{eq-sys-pde}}
n_t= -f(n,b), \label{sys-ode1}\\
b_t=[g(n)h(b)b_x]_x+f(n,b). \label{sys-ode2}
\end{subnumcases}
Here, $n=n(x,t)$ and $b=b(x,t)$ denote two unknown functions dependent on time $t\geq0$ and space $x\in\mathbb{R}$. The terms $g(n)h(b)$ and $f(n,b)$ are representative of the diffusivity and the reaction, respectively. We focus on scenarios where the diffusivity doubly degenerates at zero, characterized by $g$ and $h$ vanishing when $n=0$ and $b=0$, respectively. 

Models related to~\eqref{eq-sys-pde} have played a crucial role in exploring the growth dynamics of the bacterial species named \textit{Bacillus subtilis} on agar plates, as reported in~\cite{BCL-00, KMUS-97, MSM-00, SMGA-01} where $n$ and $b$ stand for the nutrient and bacterial concentrations, respectively. All these works provided insights into studying spatiotemporal pattern formations, particularly when diffusivity is a proportional function of the product between the nutrient and bacterial concentrations. For instance, numerical analyses in~\cite{SMGA-01}, which specifically considered the case where $g(n)h(b)=nb$ and $f(n,b)=nb$, indicates the presence of traveling waves. These are solutions that support profiles with a constant speed of propagation. The research in~\cite{SMGA-01} highlighted the existence of a critical propagation speed where the traveling wave is non-differentiable at a certain point, referred to as a sharp traveling wave.

While significant focus has been on certain diffusivity and reaction choices, a comprehensive exploration of the existence and qualitative analysis of traveling waves for equation \eqref{eq-sys-pde} seems to be limited or largely unexplored. It is worth noting that other reactions such as $f(n,b)=\frac{nb}{1+kn}$ (with $k\in\mathbb{R}$) and diffusivities involving strictly increasing functions $g$ and $h$ vanishing only at $0$ are biologically relevant, as outlined in~\cite{BaDe-07, KMUS-97,Ha-04, Mu-02,Maini-new}. Also, in those cases, to the best of our knowledge, comprehensive mathematical analyses addressing traveling waves for systems as~\eqref{eq-sys-pde} are still lacking.

In response, we deal with system~\eqref{eq-sys-pde} by considering a wide range of diffusion and reaction terms, including all the examples above. The assumptions on those terms are detailed in Section~\ref{subsec-intro1}. We establish the existence of a half-line of admissible wave speeds with a unique associated traveling wave. Notably, all the traveling waves we found are composed of a couple of functions that are strictly monotone. We also discuss their regularity, and besides giving estimates of the threshold speed, we prove that the profiles are smooth functions, except for the one at the threshold speed, which exhibits a ``sharp'' behavior. These results align with those presented in~\cite{MS-24,SMGA-01} and mirror the behavior in the case of scalar equations with degenerate diffusivity and reaction terms of Fisher-KPP type~\cite{MaMa-03,Maini-JDE95,Maini-new}.

Besides the degenerate models we are dealing with in this paper, the existing literature showcases various applications that exploit reaction-diffusion systems similar to~\eqref{eq-sys-pde}. Some models use constant diffusivity, as in~\cite{Lo-97,LoLo-96}. Others assume diffusion terms that are not doubly degenerate, with assumptions such as $g(n)>0$ and $h(b)=0$ only at $b=0$~\cite{Coall-21,GaMa-22} or at $b=1$~\cite{Mitra-23}. These formulations have been pivotal in modeling combustion, chemical reactions, tumor growth dynamics, and cellulolytic biofilm growth. In all these models the diffusivity and reaction assumptions do not affect the existence of an admissible line of wave speeds. However, the different assumptions on the diffusivity and reaction deeply influence the qualitative properties of the associated traveling waves. As a consequence, these models might support traveling waves that are made by a couple of monotonic smooth functions \cite{GaMa-22, Lo-97,LoLo-96}. Alternatively, they may produce traveling waves that become sharp at the threshold speed~\cite{Coall-21}, similar to our findings, or may produce traveling waves made by a couple of functions that do not maintain monotonic properties~\cite{Mitra-23}.

\subsection{Traveling waves: assumptions and statement of main results}\label{subsec-intro1}

Our focus in this paper is on traveling waves of~\eqref{eq-sys-pde}, which are solutions that maintain a constant speed $c\in\mathbb{R}$. By introducing the traveling wave coordinate $\xi=x-ct$, we can express $\eta(\xi)=n(x,t)$ and $\beta(\xi)=b(x,t)$. This allows us to analyze the following system:
\begin{subnumcases}{\label{eq-sys}}
	c\eta' - f(\eta,\beta)=0, \label{eq-ode-1} \\
	\left(g(\eta)h(\beta)\beta'\right)'+c\beta'+f(\eta,\beta)=0,\label{eq-ode-2}
\end{subnumcases}
where $'=\frac{\mathrm{d}}{\mathrm{d}\xi}$. Taking into consideration the applications, we henceforth assume that:
\begin{enumerate}[labelwidth=20pt, align=left]
\item[$(A_1)$] $f\colon[0,1]^2\to[0,+\infty)$ is a function of class $C^1$ and such that $f(s,r)=0$ if and only if either $s=0$ or $r=0$, and there exists $0 < L_1\leq L_2$ such that $L_1sr\leq f(s,r)\leq L_2 sr$ for all $(s,r)\in(0,1)^2$;  
\item[$(A_2)$]  $g\colon[0,1]\to[0,+\infty)$ is a function of class $C^1$ such that $g(s)=0$ if and only if $ s=0$ and there exists $ M_g >0 $ such that $g(s)\geq M_g g(s_1)$ for all $s,s_1\in[0,1]$ with $s\geq s_1\geq0$ and $\dot g(0)>0$; 
\item[$(A_3)$] $h\colon[0,1]\to[0,+\infty)$ is a functions of class $C^1$ such that $ h(r)=0$ if and only if $r=0$ and $\dot h(0)>0$.
\end{enumerate}
It should be noted that the regularity assumptions outlined above can be relaxed by requiring them only in a neighborhood of $(s,r)$ with $s,r\in\{0,1\}$.  For the sake of exposition, we assume them in all domains while emphasizing the minimum assumptions required in each section.

\medskip

Since there is a degenerate diffusion term in~\eqref{eq-sys} we can expect traveling waves to have sharp behaviors. For this reason, we clarify below the notion of the solution we consider here.

\begin{definition}[Traveling wave]\label{def-wave}
Let $c\in\mathbb{R}$. A traveling wave of system~\eqref{eq-sys-pde} is a couple of functions $\left(n,b \right)$ such that $n(x,t)=\eta(x-ct)=\eta(\xi)$ and $b(x,t)=\beta(x-ct)=\beta(\xi)$ where $\eta,\beta\colon \mathbb{R}\to[0,1]$ satisfy
\begin{enumerate}[nosep,wide=0pt, labelwidth=15pt, align=left]
\item[$(i)$] $\eta$ is of class $C^1$ and solves~\eqref{eq-ode-1};
\item[$(ii)$] $\beta$ is continuous and differentiable a.e. with $h(\beta)\beta'\in L^1_{loc}(\mathbb{R})$ and solves weakly~\eqref{eq-ode-2}, namely, for every $\psi\in C^{\infty}_0(\mathbb{R})$, it satisfies
\begin{equation}\label{eq-weak}
\int_{\mathbb{R}} \Big(\left(g(\eta(\xi))h(\beta(\xi))\beta'(\xi)+c\beta(\xi)\right)\psi'(\xi) -f(\eta(\xi),\beta(\xi))\psi(\xi) \Big)\,\mathrm{d}\xi=0.
\end{equation} 
\end{enumerate}
The couple $(\eta,\beta)$ is the {wave profile}, and $c$ is the {wave speed}. 
\end{definition}
\noindent\textit{Warning.} The term ``traveling wave'' will be used also for the profile $(\eta,\beta)$ for the sake of brevity.

\begin{remark} According to the definition, a traveling wave takes values in $ [0,1]. $ Therefore, without loss of generality, we denote by $ f $ also its continuous extension in $ \mathbb R^2 $ and by $ g $ and $ h $ their continuous extensions in $ \mathbb R. $ 
\end{remark}

We are interested in profiles $(\eta,\beta)$ connecting the steady states of system~\eqref{eq-sys} which are $(0,\bar{\beta})$ with $\bar{\beta}\in[0,1]$ and $(\bar{\eta},0)$ with $\bar{\eta}\in[0,1]$. For simplicity and in alignment with application-specific requirements as discussed in~\cite{SMGA-01}, we will consider $\bar{\eta}=1=\bar{\beta}$, which leads to the boundary conditions:
\begin{subnumcases}{\label{eq-bc}}
	\left(\eta(-\infty),\beta(-\infty)\right)=(0,1), \label{eq-bc1} \\
	\left(\eta(+\infty),\beta(+\infty)\right)=(1,0), \label{eq-bc2}
\end{subnumcases}
where for any function $\rho$ having limit at $\pm\infty$, we abbreviate $\lim_{\xi\to\pm\infty} \rho(\xi)=\rho(\pm\infty).$

\smallskip 
Additionally, we classify these traveling waves into two categories based on whether they reach equilibrium in finite time or not. 

\begin{definition}[Classical and sharp traveling wave]\label{def-2}
A traveling wave $(\eta,\beta)$ is classical if the function $\beta$ is twice continuously differentiable in $\mathbb{R}$.
On the other hand, a traveling wave $(\eta,\beta)$ is sharp at the point $\ell\in\{0,1\}$ if there exists a real number $\xi_\ell$ such that $\beta(\xi_\ell)=\ell$, and the function $\beta$ is classical on the set $\mathbb{R}\setminus\{\xi_\ell\}$, but not differentiable at the point $\xi_\ell$.
\end{definition}

With these notions in mind, the following theorem collects the core findings of this paper. 

\begin{theorem}
Assume $(A_1)$--$(A_3)$. Then there exists $c_0\in(0,+\infty)$ such that
\begin{itemize}[nosep, leftmargin=*]
\item if $c<c_0$, then~\eqref{eq-sys-pde} has no traveling waves;
\item if $c\geq c_0$, then~\eqref{eq-sys-pde} has a unique (up to shifts) traveling wave whose profile $(\eta,\beta)$ is component-wise monotone, satisfies~\eqref{eq-bc}, and has speed $c$. 
\end{itemize}
Furthermore, the profile $(\eta,\beta)$ is sharp at $0$ if and only if $c=c_0$; otherwise, it is classical. By denoting
\[\begin{split}
&c_{\sharp}:=\max \left\{ \sqrt{L_1M_g \int_0^1 g(1-r)h(r)r\,\mathrm{d}r}, \sqrt{2L_1M_g \int_0^1 (1-r)g(1-r)h(r)r\,\mathrm{d}r} \right\},\\
&c_*:=2\sqrt{L_2\max_{s\in[0,1]}g(s)\sup_{r\in(0,1]}\frac{h(r)}{r}},
\end{split}\]
the following estimates hold:
\[c_{\sharp}\leq c_0\leq c_*.
\]
\end{theorem}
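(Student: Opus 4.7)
The plan is to exploit a first integral of~\eqref{eq-sys} to collapse the coupled second-order problem into a scalar first-order ODE, then run a shooting/comparison analysis in the speed~$c$. Summing~\eqref{eq-ode-1} and~\eqref{eq-ode-2} gives $(g(\eta)h(\beta)\beta'+c(\eta+\beta))'=0$; combined with~\eqref{eq-bc} and the degeneracies $g(0)=h(0)=0$ (which force $g(\eta)h(\beta)\beta'\to 0$ at $\xi=\pm\infty$), one obtains the first integral
\begin{equation}\label{eq-plan-fi}
g(\eta)h(\beta)\beta' = c\,(1-\eta-\beta).
\end{equation}
Since $\eta'=f(\eta,\beta)/c\geq 0$ by $(A_1)$, $\eta$ is non-decreasing, and~\eqref{eq-plan-fi} then forces $\beta$ non-increasing precisely on the triangle $T:=\{(\eta,\beta)\in[0,1]^2:\eta+\beta\geq 1\}$, which is positively invariant by inspection of the vector field on $\partial T$. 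On $T^\circ$ one can use $\eta$ as independent variable and set $\phi(\eta):=\beta(\xi(\eta))$, reducing~\eqref{eq-sys} to the scalar first-order ODE
\begin{equation}\label{eq-plan-red}
\phi'(\eta) = \Phi_c(\eta,\phi) := \frac{c^2(1-\eta-\phi)}{g(\eta)h(\phi)f(\eta,\phi)},\qquad \phi(0)=1,\ \phi(1)=0.
\end{equation}

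\textbf{Shooting in $c$.} An asymptotic analysis of~\eqref{eq-plan-red} at the singular corner $(0,1)$, using $\dot g(0)>0$, $h(1)>0$ and $f(\eta,1)\sim f_\eta(0,1)\eta$, produces for every $c>0$ a unique local trajectory $\phi_c$ entering $T^\circ$, with expansion $\phi_c(\eta)=1-\eta+(M/c^2)\eta^2+o(\eta^2)$ where $M:=\dot g(0)h(1)f_\eta(0,1)>0$. Extend $\phi_c$ maximally on $[0,1)$. Since $1-\eta-\phi<0$ on $T^\circ$, the slope comparison $\phi_{c_2}'<\phi_{c_1}'$ at any common point (for $c_1<c_2$) yields pointwise monotonicity $c\mapsto\phi_c$ decreasing, so the admissible set $\{c>0:\phi_c\text{ reaches }(1,0)\}$ is a closed half-line $[c_0,+\infty)$ with $c_0\in(0,+\infty)$ by the estimates below. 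The wave in $\xi$-coordinate is recovered from $\phi_c$ via the quadrature $\xi(\eta)=\int_0^\eta c\,\mathrm ds/f(s,\phi_c(s))$, which gives uniqueness of the profile up to $\xi$-translations.

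\textbf{Bounds on $c_0$.} For $c_0\leq c_*$, one constructs an explicit super-solution of~\eqref{eq-plan-red} at $c=c_*$, built from a suitable perturbation of the chord $\phi=1-\eta$ together with the bounds $h(\phi)/\phi\leq\sup_{r\in(0,1]}h(r)/r$, $f\leq L_2\eta\phi$ and $g\leq\max g$; comparison then traps $\phi_{c_*}$ below the super-solution and forces it to reach $(1,0)$. For $c_\sharp\leq c_0$, one derives two weighted integral identities by multiplying the second equation~\eqref{eq-ode-2} (or equivalently~\eqref{eq-plan-fi}) by $h(\beta)\beta$ and by $(1-\beta)h(\beta)\beta$ respectively, integrating over $\xi\in\mathbb{R}$, and exploiting $f\geq L_1\eta\beta$ from $(A_1)$ together with the key monotonicity $g(\eta)\geq M_g g(1-\beta)$ on $T$ from $(A_2)$; each weight yields one of the two quantities inside the $\max$ defining $c_\sharp$.

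\textbf{Sharp--classical dichotomy.} The principal obstacle is classifying the regularity at $c=c_0$. A local phase-plane analysis of~\eqref{eq-plan-red} at the corner $(1,0)$ admits exactly two asymptotic modes: a \emph{tangential} mode $\phi\sim(1-\eta)+O((1-\eta)^2)$, which reintegrates via~\eqref{eq-ode-1} to exponential decay $\beta\sim Ae^{-\lambda\xi}$ as $\xi\to+\infty$, producing a $C^2$ profile on all of $\mathbb{R}$; and a \emph{transversal} mode $\phi\sim k\sqrt{1-\eta}$ with $k^2=2c^2/(g(1)\dot h(0)f_\beta(1,0))$, which reintegrates to a finite $\xi_0\in\mathbb{R}$ at which $\beta$ vanishes linearly from the left and remains $\equiv 0$ to the right -- a sharp profile at $0$ in the sense of Definition~\ref{def-2}. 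The tangential branch is a generic one-parameter family of local solutions near $(1,0)$, whereas the transversal one is codimension-one; combined with the monotonicity $c\mapsto\phi_c$ and the characterization of $c_0$ as the infimum of admissible speeds, this pins the transversal (sharp) mode exactly to $c=c_0$, while every $c>c_0$ realizes the tangential (classical) mode.
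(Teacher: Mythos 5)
Your overall strategy coincides with the paper's: the same first integral, the same reduction to the scalar singular ODE $\dot\phi=c^2(1-\eta-\phi)/(g(\eta)h(\phi)f(\eta,\phi))$ (the paper's $B_c$ in~\eqref{eq-bb}), monotonicity of $\phi_c$ in $c$ via the quadratic expansion at $(0,1)$, integral identities for the lower bound, and a local analysis at $(1,0)$ distinguishing a tangential from a transversal mode. The problem is that the steps you present as routine are exactly where the work lies. First, ``an asymptotic analysis at the singular corner $(0,1)$ produces for every $c>0$ a unique local trajectory'' is an assertion, not an analysis: the corner is a degenerate singular point (the denominator vanishes to second order there), existence of a trajectory leaving it is obtained in the paper by a shooting argument in $\beta_0$ plus a Schauder--Tychonoff fixed point in a Fr\'echet space (Proposition~\ref{th-beta}, Theorem~\ref{th-m}), and uniqueness requires the center manifold theorem together with a separate proof that the center manifold is unique (Theorem~\ref{th-uniq}) --- center manifolds are in general non-unique, so the ``unique up to shifts'' part of the statement has no justification as written. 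Second, monotonicity of $c\mapsto\phi_c$ shows the admissible set is an interval unbounded above once nonempty, but it does not give closedness at the left endpoint; the paper needs equiboundedness of $\dot B_{c_n}$, Ascoli--Arzel\`a and identification of the limit (Proposition~\ref{prop-c0}) to conclude $c_0\in\mathcal C$.

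Third, and most seriously, the sharp--classical dichotomy is not proved by your codimension heuristic: that the transversal mode $\phi\sim k\sqrt{1-\eta}$ is ``codimension one'' does not explain why it occurs precisely at $c=c_0$ rather than at some other admissible speed, or at none. The paper needs two distinct ingredients: uniqueness of the sharp speed, by comparing the coefficients $k=c\sqrt{2/(g(1)\dot h(0)f'_\beta(1,0))}$ of two hypothetical sharp speeds against the monotonicity in $c$ (Lemma~\ref{lem-us}); and the proof that $c_0$ itself is sharp, which approaches $c_0$ from below through non-admissible speeds, upgrades to uniform convergence via Dini's theorem, and contradicts the modulus of equicontinuity if $\dot B_{c_0}(1)=-1$ (Proposition~\ref{prop-B1inf}). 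Neither appears in your sketch. Likewise, the upper bound $c_0\le c_*$ rests on a super-solution ``built from a suitable perturbation of the chord'' that you never exhibit; the paper instead passes to the flux variable $z_c(\beta)=g(\eta)h(\beta)\beta'$ and compares with the Aronson--Weinberger problem~\eqref{eq-w}, which is what actually produces the constant $c_*$. Your integral identities for $c_\sharp$ are in the right spirit, though the weights you name ($h(\beta)\beta$ and $(1-\beta)h(\beta)\beta$) do not reproduce the two integrands in $c_\sharp$; the paper multiplies~\eqref{eq-ode-2} by $\beta$ and by $g(\eta)h(\beta)\beta'$ and only afterwards inserts $f\ge L_1\eta\beta$ and $g(\eta)\ge M_g g(1-\beta)$.
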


Specifically, it establishes the existence of a half-line of admissible wave speeds with a unique associated traveling wave. Notably, this traveling wave exhibits ``sharp'' behavior exclusively at the threshold speed $c_0$. We will refer to Remark~\ref{rem41} for comparing the estimates $c_{\sharp}$ and  $c_*$ of the threshold speed.

\subsection{Organization of the paper}

In Section~\ref{section-2}, we discuss the preliminary properties of system~\eqref{eq-sys}--\eqref{eq-bc}. To streamline the subsequent analysis, an equivalent formulation of this system is introduced via equation~\eqref{eq-aux1}.

In Section~\ref{section-3}, we establish the existence and uniqueness, for sufficiently large speeds, of a classical traveling wave whose profile is component-wise monotone (see Theorem~\ref{th-positive}). Firstly, we use the shooting method and fixed-point theory in locally convex topological vector space to prove the existence of such a traveling wave on the negative half-line for every positive wave speed (see Section~\ref{sub-ex-n}). The uniqueness of this traveling wave on the negative half-line is proved using the central manifold theorem and the proof that there is at most one central manifold (since, in general, it is not necessarily unique). Using a comparison technique, we ultimately extend this traveling wave on the positive half-line, yielding a global traveling wave (see Section~\ref{sub-ex-p}).

Finally, in Section~\ref{section-4}, we prove the existence of a threshold speed and explore the qualitative properties of these traveling waves. These properties are analyzed using a first-order reduction method.

\section{Preliminary results}\label{section-2}
In this section, we prove some properties of traveling waves of~\eqref{eq-sys}--\eqref{eq-bc} that will be crucial in the forthcoming sections. Although we implicitly assume conditions $(A_1)$--$(A_3)$, it is worth noting that, regarding the regularity of function $f$, its Lipschitz continuity would be sufficient.

\begin{proposition}\label{prop1}
If $(\eta,\beta)$ is a traveling wave of~\eqref{eq-sys} and satisfies either~\eqref{eq-bc1} or~\eqref{eq-bc2}, then $ \beta $ satisfies a.e. 
\begin{equation}\label{eq-aux1}
g(\eta)h(\beta)\beta'+c\beta+c\eta-c=0.
\end{equation}
Moreover, if $ J= \{ \xi \in \mathbb R \colon \beta(\xi) = 0\},$ then $\beta \in C^2(\mathbb R \setminus J)$ and satisfies~\eqref{eq-aux1} in $ \mathbb R \setminus J. $
\end{proposition}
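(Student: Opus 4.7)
The plan is to integrate the system once to obtain a first-order scalar identity and then pin down the integration constant via the boundary condition. First, I would use \eqref{eq-ode-1} to substitute $f(\eta,\beta)=c\eta'$ into the weak formulation \eqref{eq-weak}. Since $\eta\in C^{1}$, one integration by parts gives $\int c\eta'\psi\,\mathrm{d}\xi=-\int c\eta\,\psi'\,\mathrm{d}\xi$ for every $\psi\in C_{0}^{\infty}(\mathbb{R})$, and the weak equation collapses to
\[
\int_{\mathbb{R}}\bigl(g(\eta)h(\beta)\beta'+c\beta+c\eta\bigr)\psi'\,\mathrm{d}\xi=0\qquad\text{for all }\psi\in C_{0}^{\infty}(\mathbb{R}).
\]
The Du Bois--Reymond lemma then yields a constant $C\in\mathbb{R}$ with $g(\eta)h(\beta)\beta'+c\beta+c\eta=C$ a.e.; since $c\beta+c\eta$ is continuous, $g(\eta)h(\beta)\beta'$ admits the continuous representative $C-c\beta-c\eta$, so the identity in fact holds everywhere.

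Second, I would show $C=c$ assuming \eqref{eq-bc1} (the case \eqref{eq-bc2} is symmetric). Continuity gives $g(\eta)h(\beta)\beta'\to C-c$ as $\xi\to-\infty$, so it suffices to prove this limit is $0$. I distinguish two situations according to whether $\eta$ vanishes identically on some left half-line. If $\eta\equiv 0$ on $(-\infty,\xi^{\star}]$, the a.e. identity there collapses to $c\beta=C$, so $\beta\equiv C/c$ when $c\neq 0$; combined with $\beta(-\infty)=1$ this forces $C=c$ (the case $c=0$ is immediate). Otherwise $\eta>0$ on some left neighbourhood $(-\infty,\xi_{1}]$ of $-\infty$, and since $\beta\to 1$ one also has $g(\eta)h(\beta)>0$ there; rewriting the identity as $\beta'=(C-c\beta-c\eta)/(g(\eta)h(\beta))$ exhibits $\beta'$ as a continuous function, so $\beta\in C^{1}((-\infty,\xi_{1}])$. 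If $C\neq c$, the denominator tends to $0$ while the numerator tends to $C-c\neq 0$, forcing $|\beta'|\to+\infty$ with a definite sign near $-\infty$; integrating on $[\xi,\xi_{1}]$ and letting $\xi\to-\infty$ then contradicts $\beta\in[0,1]$.

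Finally, for the regularity claim on $\mathbb{R}\setminus J=\{\beta>0\}$, I would use the identity $g(\eta)h(\beta)\beta'=c(1-\beta-\eta)$. On the open subset where additionally $\eta>0$, the denominator is positive, so $\beta'$ equals a continuous function of $(\eta,\beta)$; this places $\beta$ in $C^{1}$, and a standard bootstrap using $\eta\in C^{1}$ and $g,h\in C^{1}$ upgrades $\beta$ to $C^{2}$. At any $\xi_{0}\in\mathbb{R}\setminus J$ with $\eta(\xi_{0})=0$, the identity forces $\beta(\xi_{0})=1$; a connectedness argument then shows $\beta\equiv 1$ on any component of $\{\eta=0\}\cap(\mathbb{R}\setminus J)$, on which $\beta$ is trivially smooth. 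Patching yields $\beta\in C^{2}(\mathbb{R}\setminus J)$ satisfying \eqref{eq-aux1} there. The main obstacle is the second step: since the weak formulation provides no direct control of $\beta'(\xi)$ at infinity, the identification $C=c$ must exploit the specific degeneracy $g(0)=0$ via the above case analysis and blow-up-versus-boundedness argument, rather than through a direct limit in the weak equation.
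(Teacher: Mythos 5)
Your overall route is the same as the paper's: substitute $f(\eta,\beta)=c\eta'$ into \eqref{eq-weak}, integrate by parts, apply the Du Bois--Reymond lemma to get $g(\eta)h(\beta)\beta'+c\beta+c\eta=C$ a.e., identify $C=c$ from the boundary condition, and then read the $C^2$ regularity off the resulting explicit formula for $\beta'$ on $\{\beta>0\}$. In fact you are more explicit than the paper on the identification $C=c$ (the paper dispatches it with ``from the boundary conditions, we get \eqref{eq-aux1}''), and your blow-up-versus-boundedness argument is a legitimate way to carry that step out.

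There is, however, one step that does not close as written. The dichotomy ``either $\eta\equiv 0$ on some left half-line, or $\eta>0$ on a left neighbourhood of $-\infty$'' is not exhaustive for an arbitrary continuous $\eta$: a priori $\eta$ could vanish on a sequence tending to $-\infty$ without vanishing identically on any half-line. What makes the split exhaustive is the propagation of zeros of $\eta$: since $f(0,r)=0$ and $f$ is locally Lipschitz, if $\eta(\xi_0)=0$ then $\eta$ solves the Cauchy problem $c\eta'=f(\eta,\beta)$, $\eta(\xi_0)=0$, $\eta'(\xi_0)=0$, whose only solution is $\eta\equiv 0$ on all of $\mathbb{R}$. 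This is exactly the lemma the paper proves in its regularity step, and you need it twice: once to justify your dichotomy, and once at the end, where your ``patching'' of the sets $\{\eta>0\}$ and $\{\eta=0\}$ would otherwise require gluing $C^2$ regularity across the boundary of $\{\eta=0\}$ --- with the propagation lemma there is nothing to patch, since $\{\eta=0\}$ is either empty or all of $\mathbb{R}$ (in which case $\beta\equiv 1$ and everything is trivial). A related caveat concerns your claim that \eqref{eq-bc2} is ``symmetric'': there the degeneracy sits in $h$ at $\beta=0$, and $\beta$ enjoys no uniqueness principle, so the analogous dichotomy on $\beta$ is not available; the clean fix is to integrate $h(\beta)\beta'=\bigl(C-c\beta-c\eta\bigr)/g(\eta)$, i.e.\ to run the boundedness contradiction on $H(\beta)$ with $H(r)=\int_0^r h(s)\,\mathrm{d}s$, which works whether or not $\beta$ has zeros accumulating at $+\infty$.
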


\begin{proof}
Let $(a_1,a_2)$ be a bounded interval.  Since $\beta$ is differentiable a.e and $ h(\beta)\beta' \in L^1_{loc} (\mathbb R), $ then $g(\eta)h(\beta)\beta'+c\beta +c\eta \in L^1(a_1,a_2).$ 
Assume that $ (\eta,\beta) $ is a traveling wave solution satisfying either~\eqref{eq-bc1} or~\eqref{eq-bc2}. Substituting $f(\eta,\beta) = c \eta'$ in \eqref{eq-weak} and denoted by $ w $ the null function, we obtain, for every $\psi\in C^{\infty}_0(a_1,a_2)$,
\[\begin{split}
\int_{a_1}^{a_2} w(\xi) \psi(\xi) d\xi = 
0= & \int_{a_1}^{a_2} \Big(\left(g(\eta(\xi))h(\beta(\xi))\beta'(\xi)+c\beta(\xi)\right) \psi'(\xi) -c\eta'(\xi)\psi(\xi) \Big)\,\mathrm{d}\xi  \\
=&  \int_{a_1}^{a_2}\left(g(\eta(\xi))h(\beta(\xi))\beta'(\xi)+c\beta(\xi) +c\eta(\xi)\right)\psi'(\xi)\,\mathrm{d}\xi - [c\eta(\xi)\psi(\xi)]_{a_1}^{a_2}  \\
=&  \int_{a_1}^{a_2} \left(g(\eta(\xi))h(\beta(\xi))\beta'(\xi)+c\beta(\xi) +c\eta(\xi)\right)\psi'(\xi)\,\mathrm{d}\xi. 
\end{split}\]
Therefore $g(\eta)h(\beta)\beta'+c\beta +c\eta \in W^{1,1}(a_1,a_2)$ and $\left(g(\eta)h(\beta)\beta'+c\beta +c\eta\right)' =0 $ a.e. It follows that there exists a continuous function $\gamma$ such that $\gamma=g(\eta)h(\beta)\beta'+c\beta +c\eta$ a.e. in $(a_1,a_2)$ and $\gamma(a_2)-\gamma(a_1)=\int_{a_1}^{a_2} w(\xi)\, \mathrm{d}\xi = 0$. Thus, for a.a. $a_1,a_2 \in\mathbb{R},$ $\left(g(\eta)h(\beta)\beta'+c\beta +c\eta\right)(a_2)=\left(g(\eta)h(\beta)\beta'+c\beta +c\eta\right)(a_1).$ From the boundary conditions, we get~\eqref{eq-aux1}. 

To prove that $\beta$ is of class $C^2$ in $\mathbb R \setminus J$, we consider two cases: $\eta(\xi_0)=0$, for some $\xi_0\in\mathbb{R}$, or $\eta(\xi)>0$, for all $\xi\in\mathbb{R}$. In the first case,
$\eta'(\xi_0) =  {f(\eta(\xi_0),\beta(\xi_0))}/{c} = 0$, making $\eta$ a solution of the initial value problem
\[\begin{cases}
	c\eta' - f(\eta,\beta)=0, \\
	\eta(\xi_0)=0, \quad \eta'(\xi_0)=0,
\end{cases}\]
which, by the local Lipschitz continuity of $f$, has only the trivial solution ($\eta\equiv0$). Thus, from $(A_2)$, the continuity of $\beta$, and~\eqref{eq-aux1}, it follows that $\beta\equiv1$, so that $\beta$ is of class $C^2$ in $\mathbb R$. In the second case, given the continuity of $\beta, \eta, g$, and $h$, for every $\xi_0 \not\in J$, there exists $ \epsilon>0 $ such that $ g(\eta)h(\beta) $ never vanishes in $ (\xi_0 - \epsilon, \xi_0 + \epsilon) $ and, applying the mean value theorem, it follows that
\[ \lim_{\delta \to 0} \frac {\beta(\xi_0+\delta) - \beta(\xi_0)}{\delta} = \lim_{\delta \to 0} \frac{1}{\delta} \int_{\xi_0}^{\xi_0 + \delta} \frac {c(1 - \beta(s) - \eta(s))} {g(\eta(s)) h(\beta(s))}\,  \mathrm{d}s =  \frac {c(1 - \beta(\xi_0) - \eta(\xi_0))} {g(\eta(\xi_0)) h(\beta(\xi_0))}, \]
yielding that $\beta$ is differentiable in $\mathbb{R} \setminus J$, and
\begin{equation} \label{beta'}
\beta'(\xi) = \frac {c(1 - \beta(\xi) - \eta(\xi))} {g(\eta(\xi)) h(\beta(\xi))}, 
\end{equation}
for every $ \xi \notin J. $
Since $g$ and $h$ are $C^1$ functions according to $(A_2)$ and $(A_3)$, respectively, the thesis is proven.
\end{proof}

\begin{lemma}\label{lemma0}
If $(\eta,\beta)$ is a traveling wave of~\eqref{eq-sys}--\eqref{eq-bc}, then $c>0.$
\end{lemma}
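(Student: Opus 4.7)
My plan is to argue by contradiction, supposing $c \leq 0$, and to handle the cases $c<0$ and $c=0$ separately.

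The case $c<0$ is handled by a direct monotonicity argument. By $(A_1)$, $f(s,r) \geq 0$ on $[0,1]^2$, so \eqref{eq-ode-1} gives $\eta'(\xi) = f(\eta(\xi),\beta(\xi))/c \leq 0$ for every $\xi \in \mathbb{R}$, i.e., $\eta$ is non-increasing on $\mathbb{R}$. This is incompatible with the boundary behavior $\eta(-\infty)=0 < 1 = \eta(+\infty)$ prescribed by \eqref{eq-bc}.

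For the case $c=0$, equation \eqref{eq-ode-1} degenerates to $f(\eta,\beta) \equiv 0$, which by the zero-set characterization of $f$ in $(A_1)$ forces $\eta(\xi)\beta(\xi) = 0$ pointwise on $\mathbb{R}$. I then introduce $\xi_1 := \inf\{\xi \in \mathbb{R} : \beta(\xi) = 0\}$ and observe that $\xi_1$ is finite: $\xi_1 > -\infty$ follows from continuity of $\beta$ together with $\beta(-\infty)=1$, while $\xi_1 < +\infty$ follows from $\eta(+\infty)=1$ combined with $\eta\beta\equiv 0$, which forces $\beta$ to vanish on a right half-line. On $(-\infty,\xi_1)$, $\beta>0$, hence $\eta\equiv 0$, and by continuity $\eta(\xi_1) = \beta(\xi_1) = 0$. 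I then invoke the identity \eqref{eq-aux1} of Proposition~\ref{prop1} (whose a.e.\ conservation law is derived independently of the sign of $c$): with $c=0$, it reads $g(\eta(\xi))h(\beta(\xi))\beta'(\xi) = 0$ a.e. The aim is to combine this with the weak formulation \eqref{eq-weak} and the continuity of $\beta$ at $\xi_1$ to force $\beta$ to be identically $1$ on $(-\infty,\xi_1)$, contradicting $\beta(\xi_1)=0$.

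The main obstacle is precisely the closing step of the $c=0$ case. Since $\eta\equiv 0$ on $(-\infty,\xi_1)$, the identity \eqref{eq-aux1} is automatically satisfied there (both sides vanish because $g(\eta)=0$), so it gives no direct control on $\beta'$. To resolve this, one must exploit the weak formulation \eqref{eq-weak} together with the $C^1$ regularity of $\eta$ imposed by Definition~\ref{def-wave} and the transition behavior of $\eta$ and $\beta$ at $\xi_1$, thereby excluding nontrivial profiles of $\beta$ on $(-\infty,\xi_1)$ and closing the contradiction.
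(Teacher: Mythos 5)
Your treatment of the case $c<0$ is correct and is essentially the paper's argument in pointwise rather than integrated form: the paper integrates \eqref{eq-ode-1} over $\mathbb{R}$ and uses \eqref{eq-bc} to get $c=\int_{\mathbb{R}}f(\eta(\xi),\beta(\xi))\,\mathrm{d}\xi\geq0$, while you observe directly that $c<0$ would make $\eta$ non-increasing, contradicting $\eta(-\infty)=0<1=\eta(+\infty)$. Either version is fine.

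The case $c=0$, however, is not proved: you state an ``aim'' and candidly flag the obstacle, but you never close the contradiction, and the route you sketch cannot work. On $(-\infty,\xi_1)$ you have $\eta\equiv0$, hence $g(\eta)\equiv g(0)=0$ and $f(\eta,\beta)\equiv0$; with $c=0$ every term in the weak formulation \eqref{eq-weak} then vanishes identically for any test function supported in $(-\infty,\xi_1)$, so \eqref{eq-weak} reduces to $0=0$ and imposes no constraint whatsoever on the profile of $\beta$ there --- in particular it cannot ``force $\beta$ to be identically $1$''. The same is true of \eqref{eq-aux1}, as you yourself note. For comparison, the paper disposes of this case in one line: from $f(\eta,\beta)\equiv0$, continuity, and $(A_1)$ it concludes that either $\eta\equiv0$ or $\beta\equiv0$, which contradicts \eqref{eq-bc}. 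Your own $\xi_1$-construction shows precisely why that dichotomy is delicate: the closed sets $\{\eta=0\}$ and $\{\beta=0\}$ are only required to cover $\mathbb{R}$, and a configuration in which $\eta$ vanishes on a left half-line and $\beta$ on a right half-line is compatible with $f(\eta,\beta)\equiv0$, with the regularity demanded by Definition~\ref{def-wave}, and with \eqref{eq-bc}. So you have correctly located the crux of the $c=0$ case, but you have not supplied an argument that excludes it, and the tools you propose for doing so (\eqref{eq-weak} and \eqref{eq-aux1} restricted to $(-\infty,\xi_1)$) are insufficient in principle, since both are vacuous on that interval. As it stands the proof is incomplete.
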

\begin{proof}
Integrating~\eqref{eq-ode-1} in $(-\infty,+\infty)$ and using~\eqref{eq-bc}, we obtain 
\begin{equation}\label{c1}
c=c\left(1-0\right)=c\int^{+\infty}_{-\infty}\eta'(\xi)\,\mathrm{d}\xi=\int_{-\infty}^{+\infty}f(\eta(\xi),\beta(\xi))\,\mathrm{d}\xi\geq0.
\end{equation}
Suppose, for the sake of contradiction, that $c=0$. Then, by assumption $(A_1)$ and since $\eta$ and $\beta$ are continuous, we would have $f(\eta(\xi),\beta(\xi))=0$ for every $\xi\in\mathbb{R}$. As a result, either $\eta\equiv0$ or $\beta\equiv0$, which contradicts the boundary conditions~\eqref{eq-bc}.
\end{proof}

\begin{lemma}\label{lemma1}
If $(\eta,\beta)$ is a traveling wave of~\eqref{eq-sys}--\eqref{eq-bc}, then there exists $\tau\in\mathbb{R}\cup\{+\infty\}$ such that
\begin{enumerate}[nosep,wide=0pt,  labelwidth=20pt, align=left]
\item[$(i)$] $\beta'(\xi)<0$ and $0<\beta(\xi)<1$, $\forall\xi<\tau$;
\item[$(ii)$] $\eta'(\xi)>0$ and $0<\eta(\xi)<1$, $\forall\xi<\tau$;
\item[$(iii)$] $c=\int_{-\infty}^{\tau} f(\eta(\xi),\beta(\xi))\, \mathrm{d}\xi$ and $\beta(\xi)=0$, $\eta(\xi)= 1$, $ \forall \,\xi \geq \tau$;
\item[$(iv)$] $\eta(\xi)+\beta(\xi)\geq1,$ $\forall\xi\in\mathbb{R}$.
\end{enumerate}
\end{lemma}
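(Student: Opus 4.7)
The plan is to first use the non-decreasingness of $\eta$ to identify a candidate for $\tau$, then establish the key inequality $\eta+\beta\geq 1$ of item~(iv) by a maximum-principle argument, and finally read off the strict monotonicity of both components from this bound combined with the pointwise identity~\eqref{eq-aux1}.

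First, since Lemma~\ref{lemma0} gives $c>0$ and~\eqref{eq-ode-1} reads $c\eta'=f(\eta,\beta)\geq 0$, the function $\eta\in C^1(\mathbb{R})$ is non-decreasing. A local Lipschitz uniqueness argument as in the proof of Proposition~\ref{prop1} shows that if $\eta(\xi_0)=0$ at some finite $\xi_0$ then $\eta\equiv 0$, contradicting~\eqref{eq-bc2}; hence $\eta>0$ on $\mathbb{R}$. Similarly, if $\beta(\xi_0)=1$ at some finite $\xi_0$, then $\xi_0\notin J$ and~\eqref{eq-aux1} gives $\beta'(\xi_0)=-c\eta(\xi_0)/\bigl(g(\eta(\xi_0))h(1)\bigr)<0$, contradicting the fact that $\xi_0$ is a global maximum of $\beta$; hence $\beta<1$ on $\mathbb{R}$. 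Define $\tau:=\inf\{\xi\in\mathbb{R}\colon \eta(\xi)=1\}\in\mathbb{R}\cup\{+\infty\}$.

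The heart of the proof is to show $W:=1-\eta-\beta\leq 0$. Set $M:=\sup_{\mathbb{R}}W$; since $W$ is continuous with $W(\pm\infty)=0$, if $M>0$ then this supremum is attained at some $\xi_1\in\mathbb{R}$. If $\beta(\xi_1)>0$, then $\xi_1\notin J$, so $W$ is $C^1$ at $\xi_1$ and $W'(\xi_1)=0$; but~\eqref{eq-aux1} forces $\beta'(\xi_1)=cW(\xi_1)/\bigl(g(\eta(\xi_1))h(\beta(\xi_1))\bigr)>0$, which together with $\eta'(\xi_1)\geq 0$ gives $W'(\xi_1)<0$, a contradiction. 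If instead $\beta(\xi_1)=0$, then~\eqref{eq-aux1} cannot be used pointwise at $\xi_1$; the strategy is to exploit the monotonicity of $\eta$ to find the compact interval $[a,b]:=\eta^{-1}(\eta(\xi_1))$, on which $\eta$ is constant and equal to $\eta(\xi_1)\in(0,1)$, so that $\eta'=0$ and hence $\beta\equiv 0$ there by~\eqref{eq-ode-1} and $(A_1)$; in particular $W\equiv M$ on $[a,b]$. For $\xi<a$ the inequality $W(\xi)\leq M$ together with $\eta(\xi)<\eta(\xi_1)$ forces $\beta(\xi)\geq \eta(\xi_1)-\eta(\xi)>0$, so $(-\infty,a)\subset\mathbb{R}\setminus J$; since $W(a)=M>0$, continuity and~\eqref{eq-aux1} yield $\beta'>0$ on a left-neighborhood of $a$. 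But then strict monotonicity combined with $\lim_{\xi\to a^-}\beta(\xi)=\beta(a)=0$ forces $\beta<0$ just below $a$, contradicting $\beta\geq 0$. Therefore $M\leq 0$, which is exactly~(iv).

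The remaining assertions follow quickly. If $\tau<+\infty$, monotonicity and continuity of $\eta$ give $\eta\equiv 1$ on $[\tau,+\infty)$, hence $f(1,\beta)\equiv 0$ and so $\beta\equiv 0$ on $[\tau,+\infty)$ by $(A_1)$; integrating~\eqref{eq-ode-1} over $\mathbb{R}$ as in Lemma~\ref{lemma0} then yields the identity in~(iii). On $(-\infty,\tau)$ we have $\eta<1$, and~(iv) gives $\beta\geq 1-\eta>0$, hence $\beta\in(0,1)$ and $(-\infty,\tau)\subset\mathbb{R}\setminus J$. Consequently $\beta'=cW/(g(\eta)h(\beta))\leq 0$ on $(-\infty,\tau)$; strict inequality is obtained by ruling out $W(\xi_0)=0$ for any $\xi_0<\tau$, since at such a (maximum) point one would have $\beta'(\xi_0)=0$ and $\eta'(\xi_0)=f(\eta(\xi_0),\beta(\xi_0))/c>0$, forcing $W'(\xi_0)<0$, which is impossible. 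This proves~(i), and~(ii) follows from $\eta'=f(\eta,\beta)/c>0$ on $(-\infty,\tau)$. The main obstacle is precisely the degenerate sub-case $\beta(\xi_1)=0$ of the maximum-principle step: because $g(\eta)h(\beta)$ vanishes exactly where $\beta=0$, the identity~\eqref{eq-aux1} provides no direct information at $\xi_1$, and one must propagate the information via the flat piece $[a,b]$ of $\eta$ to the non-degenerate region $(-\infty,a)$ before exploiting the continuity of $\beta$ at $a$.
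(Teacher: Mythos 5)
Your proof is correct, but it follows a genuinely different route from the paper's. The paper defines $\tau$ as the first zero of $\beta$, proves the strict monotonicity of $\beta$ \emph{first} by a second-order argument --- at a hypothetical critical point $\xi_1$ of $\beta$ the function $\Lambda=g(\eta)h(\beta)\beta'$ satisfies $\Lambda(\xi_1)=0$ and, by~\eqref{eq-ode-2}, $\Lambda'(\xi_1)=-f(\eta(\xi_1),\beta(\xi_1))<0$, so every critical point would be a strict local maximum, which is incompatible with $\beta(-\infty)=1$ and $\beta\le 1$ --- and only deduces $(iv)$ at the very end from~\eqref{eq-aux1}. You instead define $\tau$ through $\eta$ reaching $1$ (the two definitions coincide a posteriori), prove $(iv)$ \emph{first} by a maximum principle on $W=1-\eta-\beta$ using only the first-order identity~\eqref{eq-aux1} together with~\eqref{eq-ode-1}, and then read off $(i)$--$(iii)$ from the sign of $W$. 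The price of your ordering is the degenerate sub-case $\beta(\xi_1)=0$ at a putative positive maximum of $W$, where~\eqref{eq-aux1} is silent; your resolution --- passing to the level interval $[a,b]=\eta^{-1}(\eta(\xi_1))$, forcing $\beta\equiv 0$ there via $c\eta'=f(\eta,\beta)$, showing $\beta>0$ strictly to the left of $a$, and deriving $\beta'>0$ near $a^-$ against $\beta(a)=0$ --- is sound, and all the auxiliary facts you invoke ($\eta>0$ everywhere, $\beta<1$ everywhere, the pointwise validity of~\eqref{eq-aux1} off the zero set of $\beta$) are correctly justified from Proposition~\ref{prop1}, Lemma~\ref{lemma0} and~\eqref{eq-bc1}. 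What each approach buys: the paper's argument gets strict monotonicity cheaply but leans on the second-order structure through $\Lambda$, while yours stays entirely at the level of the first-order reduction at the cost of the extra case analysis at the degenerate points of $\sup W$; both yield exactly the statements $(i)$--$(iv)$.
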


\begin{proof}
First, we prove $(i)$--$(ii)$. Notice that by definition of traveling wave $\eta(\xi),\beta(\xi)\in[0,1]$, for every $\xi\in\mathbb{R}$. Using the same reasoning as in the proof of Proposition~\ref{prop1} and since~\eqref{eq-bc2} holds, we can show that $\eta(\xi) > 0$ for every $\xi\in\mathbb{R}$. Now, since $\beta$ is continuous and satisfies~\eqref{eq-bc1}, we define $\tau\in\mathbb{R}\cup\{+\infty\}$ such that 
\[
\beta(\tau)=0 \quad\text{and}\quad \beta(\xi)>0, \quad \forall \xi\in(-\infty,\tau).
\]
Hence, according to Proposition~\ref{prop1}, $\beta$ is of class $C^2$ in $(-\infty,\tau).$ Now, we claim that $\beta'(\xi)<0$, for every $\xi\in(-\infty,\tau)$. Let us suppose, by contradiction, that there exists $\xi_1 < \tau $ such that $\beta'(\xi_1)=0.$ By considering the function $\Lambda(\xi)=\left(g(\eta)h(\beta)\beta' \right)(\xi)$, we observe that $\Lambda(\xi_1)=0$ and $\Lambda'(\xi_1)<0.$ Thus, it follows that, for some $\epsilon>0$, $\Lambda(\xi)>0$ for every $\xi\in(\xi_1-\epsilon,\xi_1)$ and $\Lambda(\xi)<0$ for every $\xi\in(\xi_1,\xi_1+\epsilon)$. Since $\mathrm{sign}(\Lambda)=\mathrm{sign}(\beta')$ in $(-\infty,\tau)$, we deduce that $\xi_1$, as well as every other stationary point of $\beta$, is a local point of strict maximum for $\beta$ which is a contradiction. Finally, the boundary conditions~\eqref{eq-bc} prove the claim, so $(i)$ holds.

From~$(A_1)$, \eqref{eq-ode-1}, and~\eqref{c1}, we observe that $\eta'(\xi) \geq0$, for every $\xi\in\mathbb{R}$, and $\eta'(\xi) = 0 $ if and only if $ \eta(\xi)\beta(\xi)=0.$ Hence, $ \eta'(\xi)=0 $ if and only if $ \beta(\xi)=0.$ This leads to $\eta'(\xi) >0 $ and $\eta(\xi)<1,$ for every $\xi\in(-\infty,\tau)$. This proves~$(ii)$.

To prove~$(iii)$, we observe that $1 - \eta(\xi) - \beta(\xi)<0$, for every $\xi\in(-\infty,\tau)$, thanks to $(A_2)$, $(A_3)$, \eqref{beta'}, and~$(i)$. Passing to the limit as $\xi\to\tau^-$, we obtain that $\eta(\tau)\geq 1$. Since $\eta'(\xi) \geq0 $ and $\eta(\xi)\in[0,1]$ for every $\xi\in\mathbb{R}$, then we have $\eta(\xi)=1$ for every $\xi\geq\tau.$ As a consequence, $\eta'(\xi)=0$ for every $\xi\geq\tau,$ which implies $\beta(\xi)=0$ for every $\xi\geq\tau$, and using~\eqref{c1}, we have $c=\int_{-\infty}^{\tau} f(\eta(\xi),\beta(\xi))\, \mathrm{d}\xi$. 

At last, $(iv)$ follows from $(A_2)$, $(A_3)$, $(i)$, $(iii)$, and~\eqref{eq-aux1}.
\end{proof}

\begin{remark}
The set $\{\xi\in\mathbb{R}\colon \beta(\xi)>0\}=  \{\xi \in \mathbb R\colon \eta(\xi) <1 \} $ is a non-empty real interval. We define
\begin{equation}\label{eq-tau}
\tau:=\sup\{\xi\in\mathbb{R}\colon \beta(\xi)>0\}.
\end{equation}
If $ \tau \in \mathbb R, $ Lemma~\ref{lemma1} implies that $ (\eta,\beta) \equiv (1,0) $ in $ [\tau,+\infty). $ Thus, by Proposition~\ref{prop1}, $ \beta \in C^2(\mathbb R \setminus \{\tau\}). $
\end{remark}

\begin{remark}
In Lemma~\ref{lemma1}, we demonstrated that any solution of \eqref{eq-sys}--\eqref{eq-bc} satisfies $\eta(\xi) + \beta(\xi) \geq 1$ for all $\xi \in \mathbb{R}$. We stress that in our framework, equality cannot hold in the whole real line; i.e., there is no conservation of total mass. Indeed, under the given boundary conditions, system \eqref{eq-sys} reduces to $g(1-\beta(\xi))h(\beta(\xi)) f(1-\beta(\xi),\beta(\xi)) = 0$ for all $\xi \in \mathbb{R}$. 
Assumptions~$(A_2)$ and~$(A_3)$ would then yield $ \beta(\xi) = k $ for every $ \xi \in \mathbb R, $ with $ k \in \{0, 1\}, $ in contradiction with the boundary conditions.
\end{remark}

\begin{samepage}
In the next lemma, we investigate the properties of the derivatives of $ \eta $ and $ \beta. $
\begin{lemma} \label{lemma1bis}
If $(\eta,\beta)$ is a traveling wave of~\eqref{eq-sys}--\eqref{eq-bc}, then
\begin{enumerate}[nosep,wide=0pt,  labelwidth=20pt, align=left]
\item[$(i)$] $\eta'(-\infty)=0$ and $\beta'(-\infty)=0$;
\item[$(ii)$] $ \eta'(\tau)=0; $
\item[$(iii)$] if $ \tau = +\infty, \beta'(+\infty)=0$;
\item[$(iv)$] if $ \tau \in \mathbb R$, $\lim_{\xi\to\tau^-}h(\beta(\xi))\beta'(\xi)= 0.$
\end{enumerate}
\end{lemma}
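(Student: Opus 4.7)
I would treat the four statements in increasing order of difficulty, handling $(ii)$ and $(iv)$ first as direct consequences of continuity and Proposition~\ref{prop1}, and then tackling $(i)$ and $(iii)$, which require a finer asymptotic analysis because \eqref{beta'} produces a $0/0$ indeterminate form at the equilibria $(0,1)$ and $(1,0)$ where the diffusion degenerates.

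First, for $(ii)$: by Lemma~\ref{lemma1}$(iii)$, $\eta(\tau)=1$ and $\beta(\tau)=0$ (when $\tau\in\mathbb{R}$; otherwise $(ii)$ is vacuous), so $c\eta'(\tau)=f(1,0)=0$ by $(A_1)$. For $(iv)$: still from Lemma~\ref{lemma1}$(iii)$, $\eta(\xi)\to 1$ and $\beta(\xi)\to 0$ as $\xi\to\tau^-$, so $g(\eta(\xi))\to g(1)>0$ by $(A_2)$; rewriting \eqref{eq-aux1} as $h(\beta)\beta' = c(1-\beta-\eta)/g(\eta)$ and passing to the limit gives $\lim_{\xi\to\tau^-} h(\beta(\xi))\beta'(\xi) = c(1-1-0)/g(1)=0$.

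For $(i)$, the statement $\eta'(-\infty)=0$ is immediate from \eqref{eq-ode-1}, continuity of $f$, and the boundary condition \eqref{eq-bc1}: $c\eta'(\xi)=f(\eta(\xi),\beta(\xi))\to f(0,1)=0$. The hard part is $\beta'(-\infty)=0$: \eqref{beta'} reads $\beta'(\xi)=c(1-\beta-\eta)/(g(\eta)h(\beta))$, and both numerator and denominator tend to $0$ as $\xi\to-\infty$. My plan is to parametrise trajectories by $\eta$ (legitimate since, by Lemma~\ref{lemma1}$(ii)$, $\eta$ is a strictly increasing $C^1$-diffeomorphism on $(-\infty,\tau)$), and study the ratio $w(\eta) := (1-\beta(\eta))/\eta$, which by Lemma~\ref{lemma1}$(iv)$ takes values in $[0,1]$. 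Using $(A_1)$ to write $f(\eta,\beta) = \eta\beta\,\phi(\eta,\beta)$ with $\phi\in[L_1,L_2]$, $(A_2)$ to write $g(\eta)=\eta(\dot g(0)+o(1))$, and $(A_3)$ to write $h(\beta)=h(1)+o(1)$, dividing the two ODEs of \eqref{eq-sys} yields a first-order equation for $w(\eta)$ of the form
\[
\frac{dw}{d\eta} \;=\; \frac{K(\eta)(1-w)}{\eta^{2}} \;-\; \frac{w}{\eta}\,,
\]
where $K(\eta)\to K_0:=c^{2}/(L\dot g(0)h(1))>0$ with $L:=\phi(0,1)$. Since $w\in[0,1]$, the negative coefficient $-K(\eta)/\eta^{2}$ in front of $w$ drives $w$ to the slow manifold $w_{\mathrm{eq}}(\eta)=K(\eta)/(\eta+K(\eta))\to 1$ at rate that blows up as $\eta\to 0^{+}$; concretely, a comparison/Gronwall argument shows $w(\eta)\to 1$. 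Substituting back into \eqref{beta'} gives $\beta'(\xi)=c(w(\eta(\xi))-1)\eta(\xi)/(g(\eta(\xi))h(\beta(\xi)))\sim c(w-1)/(\dot g(0)h(1))\to 0$.

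For $(iii)$, if $\tau=+\infty$ then $\beta$ is strictly decreasing on $\mathbb{R}$ with $\beta(+\infty)=0$ and $\eta(+\infty)=1$, and the roles of $g$ and $h$ are swapped: now $g(\eta)\to g(1)>0$ while $h(\beta)\sim \dot h(0)\beta$. I would parametrise by $\beta$ (possible on $\mathbb{R}$ by the monotonicity of Lemma~\ref{lemma1}$(i)$) and carry out the symmetric analysis, proving that $(1-\eta(\beta))/\beta\to 1$ as $\beta\to 0^{+}$ and hence $\beta'(\xi)\to 0$. The main obstacle in both $(i)$ and $(iii)$ is precisely this asymptotic "slow manifold" step — the naive evaluation of the $0/0$ limit is insufficient and the argument rests crucially on $\dot g(0)>0$ and $\dot h(0)>0$ from $(A_2)$ and $(A_3)$.
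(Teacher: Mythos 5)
Your treatment of $(ii)$, $(iv)$, and the $\eta'$ half of $(i)$ is correct; for $(iv)$ it is in fact more direct than the paper's, which first establishes the dichotomy $\lim_{\xi\to\tau^-}\beta'\in\{0,-c/(g(1)\dot h(0))\}$ (needed later as Corollary~\ref{cor-alt}) and only then concludes, whereas you simply pass to the limit in $h(\beta)\beta'=c(1-\beta-\eta)/g(\eta)$. (Minor slip: $(ii)$ is not vacuous when $\tau=+\infty$; the same computation using \eqref{eq-bc2} covers that case.) For $\beta'(-\infty)=0$ you take a genuinely different route from the paper: the paper shows that $\lim_{\xi\to-\infty}\kappa(\xi)$, with $\kappa=(1-\beta-\eta)/g(\eta)$, exists via a liminf/limsup oscillation argument and then uses boundedness of $\beta$ to force the limit of $\beta'$ to be $0$; you instead reduce to the phase-plane ODE for $u=1-w=(\eta+\beta-1)/\eta\in[0,1]$, namely $u'=-K(\eta)u/\eta^{2}+(1-u)/\eta$, and your integrating-factor bound does close: with $P(\eta)=\int_{\eta_0}^{\eta}K(s)s^{-2}\,\mathrm{d}s$ one gets $u(\eta)\le \eta/K_-$, hence $w\to1$ and $\beta'\to0$, even with a rate. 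This in effect pre-proves $\dot B_c(0)=-1$, which the paper only establishes in Section~\ref{section-4} (Lemma~\ref{lem-dotB}) by the same oscillation technique; both arguments hinge on $\dot g(0)>0$.

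The gap is in $(iii)$. The analysis at $(1,0)$ is \emph{not} symmetric to the one at $(0,1)$: setting $z=1-(1-\eta)/\beta$ and parametrising by $\beta$, the reduced equation is $z'(\beta)=(1-z)/\beta-M(\beta)/z$ with $M$ bounded away from $0$, and unlike the equation for $u$ it does \emph{not} force $z\to0$ — the sharp profiles (which do exist, at $c=c_0$) satisfy the same reduced ODE with $z\to1$, i.e.\ $(1-\eta)/\beta\to0$. Hence no Gronwall or comparison bound valid for every solution taking values in $(0,1]$ can deliver $(1-\eta)/\beta\to1$; you must inject the hypothesis $\tau=+\infty$ somewhere. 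The argument can be rescued: $\tau=+\infty$ together with the boundedness and monotonicity of $\beta$ gives $\liminf_{\xi\to+\infty}|\beta'(\xi)|=0$, i.e.\ $\liminf z=0$, and a barrier argument (for $\beta$ small one has $z'>0$ on the strip $\delta/2\le z\le\delta$, so $z$ cannot re-enter $[\delta,1]$ once it has dropped below $\delta/2$) upgrades this to $\lim z=0$. But as written, ``carry out the symmetric analysis'' hides precisely the step where the classical/sharp dichotomy lives. The paper's route sidesteps this entirely: its oscillation argument yields only that $\lim_{\xi\to+\infty}\beta'$ exists, and boundedness of $\beta$ on the whole line then forces that limit to be $0$.
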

\end{samepage}

\begin{proof}
To prove the properties of $ \eta' $ it is sufficient to observe that, from $(A_1)$, \eqref{eq-ode-1}, and~\eqref{eq-bc1}, it follows that
\[ \lim_{ \xi \to -\infty} \eta'(\xi) = \frac 1 c \lim_{\xi \to -\infty} f(\eta(\xi),\beta(\xi))= f(0,1)=0, \]
and similarly
\[ \lim_{\xi \to \tau^-} \eta'(\xi)= f(1,0)=0. \]
Let us now prove the properties of $ \beta' $. First of all notice that $\beta'(-\infty)=0 $ or $\lim_{\xi \to -\infty} \beta'(\xi) $ does not exist, because $ \beta $ is bounded.  Assume by contradiction that $\lim_{\xi \to -\infty} \beta'(\xi)$ does not exist, then, from~\eqref{eq-aux1}, we have also that the limit of the following $C^1$ function
\[
\kappa(\xi)=\frac{1-\beta(\xi)-\eta(\xi)}{g(\eta(\xi))}
\]
does not exist, since
\begin{equation}\label{eq-b3}
\beta'(\xi)=\frac{c\kappa(\xi)}{h(\beta(\xi))}.
\end{equation} 
Hence, we assume that
\[
\ell_1:=\liminf_{\xi\to-\infty} \kappa(\xi)<\limsup_{\xi\to-\infty} \kappa(\xi)=:\ell_2.
\]
Let $\ell\in(\ell_1,\ell_2)$. Consider two sequences $\{\xi_n \}_n$ and $\{\sigma_n \}_n$ such that $\xi_n\to-\infty$ and $\sigma_n\to-\infty$ with $\lim_{n\to+\infty} \kappa(\xi_n)=\ell=\lim_{n\to+\infty} \kappa(\sigma_n)$, $\kappa'(\xi_n)>0$, and $\kappa'(\sigma_n)<0$, for all $n\in\mathbb{N}$. Thanks to the following computation
\[
\kappa'(\xi)=\frac{-\eta'(\xi)-\beta'(\xi)-\kappa(\xi)g'(\eta(\xi))\eta'(\xi))}{g(\eta(\xi))},
\]
and $(A_2)$, we obtain
\[\begin{split}
&-\eta'(\xi_n)-\beta'(\xi_n)-\kappa(\xi_n)g'(\eta(\xi_n))\eta'(\xi_n))>0, \quad \forall n\in\mathbb{N},\\
&-\eta'(\sigma_n)-\beta'(\sigma_n)-\kappa(\sigma_n)g'(\eta(\sigma_n))\eta'(\sigma_n))<0, \quad \forall n\in\mathbb{N}.
\end{split}\]
By passing to the limit as $n\to+\infty$ and using~\eqref{eq-b3}, we deduce that
\[\begin{split}
&\lim_{n \to +\infty} -\eta'(\xi_n)-\beta'(\xi_n)-\kappa(\xi_n)g'(\eta(\xi_n))\eta'(\xi_n)= -\frac{c}{h(1)}\ell\geq0,\\
&\lim_{n \to +\infty} -\eta'(\sigma_n)-\beta'(\sigma_n)-\kappa(\sigma_n)g'(\eta(\sigma_n))\eta'(\sigma_n)= -\frac{c}{h(1)}\ell\leq0.
\end{split}\]
Thus, it follows $\ell=0$, which contradicts the arbitrariness of $\ell$. Hence, $\lim_{\xi\to-\infty} \kappa(\xi)$ exists and, therefore,
\[
\lim_{\xi\to -\infty} \beta'(\xi) =  \lim_{\xi \to -\infty} \frac{c\kappa(\xi)}{h(\beta(\xi))} 
\]
exist, and so we deduce $\beta'(-\infty)=0$. This proves~$(i)$. 

Arguing in a similar way, we can prove that the $\lim_{\xi \to \tau^-} \beta'(\xi) $ exists. If $\tau=+\infty$, since $\beta$ is bounded, we deduce $\beta'(+\infty)=0$, proving $(iii)$. Otherwise, suppose $\lim_{\xi\to \tau^-} \beta'(\xi)<0$. Then, by applying De L'Hopital rule and using~$(ii)$, we obtain
\[
\lim_{\xi\to \tau^-} \beta'(\xi) =   - \frac{c}{g(1)\dot h(0)} \lim_{\xi \to\tau^-} \left(\frac{\eta'(\xi)}{\beta'(\xi)}+1\right)=- \frac{c}{g(1)\dot h(0)}.
\]
From this and~$(A_3)$, we deduce $\lim_{\xi\to\tau^-}h(\beta(\xi))\beta'(\xi)= 0$, proving $(iv)$. This concludes the proof.
\end{proof}

From the proof of Lemma~\ref{lemma1bis}, the next corollary follows straightforwardly.
\begin{corollary}\label{cor-alt}
If $(\eta,\beta)$ is a traveling wave of~\eqref{eq-sys}--\eqref{eq-bc}, then either
\begin{equation}\label{eq-alt}
\lim_{\xi\to\tau^-}\beta'(\xi)=0 \quad\text{or}\quad \lim_{\xi\to\tau^-}\beta'(\xi)=-\frac{c}{g(1)\dot h(0)}.
\end{equation}
\end{corollary}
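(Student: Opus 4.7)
The plan is to observe that Corollary~\ref{cor-alt} merely packages information already extracted inside the proof of Lemma~\ref{lemma1bis}: the existence of $\lim_{\xi\to\tau^-}\beta'(\xi)$ was established there (for $\tau=+\infty$ this is statement~(iii), and for $\tau\in\mathbb{R}$ it is obtained via the auxiliary function $\kappa=(1-\beta-\eta)/g(\eta)$ together with the relation~\eqref{eq-b3}). What remains is to identify the possible values of this limit and rule out everything but the two listed in~\eqref{eq-alt}.

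First I would dispose of the case $\tau=+\infty$: Lemma~\ref{lemma1bis}(iii) directly yields $\beta'(+\infty)=0$, which is the first alternative in~\eqref{eq-alt}. From here on I assume $\tau\in\mathbb{R}$. By Lemma~\ref{lemma1}(i), $\beta$ is strictly decreasing on $(-\infty,\tau)$, so the limit lies in $[-\infty,0]$; if it equals $0$, we are again in the first alternative, so suppose it is strictly negative. I would then invoke formula~\eqref{beta'}, valid on $(-\infty,\tau)$, which reads
\[
\beta'(\xi)=\frac{c\bigl(1-\beta(\xi)-\eta(\xi)\bigr)}{g(\eta(\xi))h(\beta(\xi))}.
\]
By Lemma~\ref{lemma1}(iii), $\eta(\tau)=1$ and $\beta(\tau)=0$, and since $h(0)=0$, both numerator and denominator tend to $0$ as $\xi\to\tau^-$. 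Applying De L'H\^opital's rule, simplifying with $g(1)>0$ from $(A_2)$, $\dot h(0)>0$ from $(A_3)$, and $\eta'(\tau)=0$ from Lemma~\ref{lemma1bis}(ii), I obtain
\[
\lim_{\xi\to\tau^-}\beta'(\xi)=-\frac{c}{g(1)\dot h(0)}\lim_{\xi\to\tau^-}\!\left(\frac{\eta'(\xi)}{\beta'(\xi)}+1\right)=-\frac{c}{g(1)\dot h(0)},
\]
because $\eta'(\xi)\to0$ while, by hypothesis, $\beta'(\xi)$ tends to a nonzero limit, so that $\eta'/\beta'\to0$.

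There is essentially no obstacle here, since the key computation already appears inside the proof of Lemma~\ref{lemma1bis}(iv); the corollary's job is only to extract it as a clean statement. The mildly delicate point is the dichotomy itself — namely that the limit is either $0$ or a \emph{finite} nonzero number, never $-\infty$. This is forced by the L'H\^opital identity above, whose right-hand side is the finite value $-c/(g(1)\dot h(0))$ as soon as we assume the limit is nonzero; hence $\beta'(\xi)$ cannot diverge, and the only two admissible values are those listed in~\eqref{eq-alt}.
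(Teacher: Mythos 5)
Your proposal is correct and takes essentially the same route as the paper, which simply reads the corollary off from the proof of Lemma~\ref{lemma1bis}: part~$(iii)$ handles $\tau=+\infty$, and for $\tau\in\mathbb{R}$ the existence of the limit together with the De L'H\^opital identity forces any nonzero limit to equal the finite value $-c/(g(1)\dot h(0))$. Your closing remark on why the limit cannot be $-\infty$ is exactly the implicit content of the paper's argument, so there is nothing to add.
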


Lemma~\ref{lemma1} implies that for a traveling wave $(\eta,\beta)$, both $\eta$ and $\beta$ are non-constant and monotonic functions. As a result, it is reasonable to focus on a particular class of traveling waves, namely wavefronts, which meet the following definition.

\begin{definition}[Wavefront and semi-wavefront]\label{def-wavefront}
Let $(\eta,\beta)$ be a traveling wave of~\eqref{eq-sys}. If $\eta$ and $\beta$ are non-constant monotone functions defined on the real line $\mathbb{R}$, then $(\eta,\beta)$ is called a wavefront solution (or simply a wavefront) of system~\eqref{eq-sys}. If $\eta$ and $\beta$ are non-constant monotone functions defined on the half-line $J=(-\infty,\xi_0)$ (respectively, $J=(\xi_0,+\infty)$) for some $\xi_0\in\mathbb{R}$, then $(\eta,\beta)$ is called a semi-wavefront in $J$.
\end{definition}

\begin{remark}\label{rem-sharp}
If $\tau=+\infty$, then $(\eta,\beta)$ is a classical wavefront thanks to Lemma~\ref{lemma1bis}~$(iii)$. Otherwise, if $\tau\in\mathbb{R}$, then from Corollary~\ref{cor-alt}, we can deduce that $(\eta,\beta)$ is a classical wavefront if $\lim_{\xi\to\tau^-}\beta'(\xi)=0$ and it is sharp at $0$ if $\lim_{\xi\to\tau^-}\beta'(\xi)=-{c}/{g(1)\dot h(0)}$. In Section~\ref{section-4}, we will prove that if $\tau\in\mathbb{R}$ then the wavefront is actually sharp at $0$. We also observe that, by Lemma~\ref{lemma1bis}~$(i)$ and~$(ii)$, there are no wavefront profiles $(\eta,\beta)$ that exhibit sharp behavior at~$1$.
\end{remark}

Taking into account both the degeneracy of diffusivity at zero and Lemma~\ref{lemma1}, it is possible for wavefronts to exhibit either classical or sharp behavior, with the latter characterized by the vanishing of $\beta$ in a finite time and in a non-differentiable way (see Remark~\ref{rem-sharp}). Figure~\ref{fig-1} depicts the two admissible wavefront types.

\begin{figure}[h!]
\centering
\begin{tikzpicture}
\begin{axis}[legend style={at={(axis cs:72,30)},anchor=north west},
  tick label style={font=\scriptsize},
  axis y line=middle, 
  axis x line=middle,
  ytick={0,1},
  yticklabel style={anchor=south east},
  xtick={0,7},
  xticklabels={$0$,$\tau$}, 
  xlabel={\small $\xi$}, ylabel={},
every axis x label/.style={
    at={(ticklabel* cs:1.0)},
    anchor=west,
},
every axis y label/.style={
    at={(ticklabel* cs:5.0)},
    anchor=south west
},
  set layers,
  width=8cm,
  height=4cm,
  xmin=-12,
  xmax=12,
  ymin=-0.1,
  ymax=1.2]
\addplot [draw=black, line width=0.6pt, smooth, on layer=axis background]coordinates {(-12,1)(12,1)}; 
\addplot [draw=black, dotted, line width=0.3pt, smooth, on layer=axis background]coordinates {(7,0)(7,1)}; 
\addplot [draw=black, line width=1pt, smooth]coordinates {(12,0) (7,0)};
\addplot [draw=black, line width=1pt, smooth]coordinates {(-12.0000,0.9700) (-11.3540,0.9704) (-10.7280,0.9707) (-10.1220,0.9707) (-9.5344,0.9706) (-8.9660,0.9703) (-8.4160,0.9698) (-7.8838,0.9691) (-7.3691,0.9681) (-6.8715,0.9668) (-6.3904,0.9653) (-5.9255,0.9635) (-5.4763,0.9613) (-5.0424,0.9588) (-4.6234,0.9560) (-4.2188,0.9528) (-3.8281,0.9492) (-3.4510,0.9452) (-3.0871,0.9408) (-2.7358,0.9360) (-2.3967,0.9307) (-2.0695,0.9250) (-1.7536,0.9188) (-1.4487,0.9120) (-1.1543,0.9048) (-0.8700,0.8970) (-0.5952,0.8887) (-0.3297,0.8798) (-0.0730,0.8703) (0.1754,0.8602) (0.4159,0.8495) (0.6490,0.8382) (0.8750,0.8263) (1.0944,0.8136) (1.3077,0.8003) (1.5153,0.7863) (1.7175,0.7715) (1.9149,0.7560) (2.1079,0.7398) (2.4824,0.7051) (2.6648,0.6865) (2.8445,0.6671) (3.1975,0.6258) (3.3718,0.6039) (3.5451,0.5810) (3.8906,0.5327) (4.0637,0.5071) (4.2376,0.4806) (4.5896,0.4246) (5.3223,0.3005) (5.5140,0.2668) (5.7099,0.2321) (6.1165,0.1593) (6.3279,0.1212) (6.5453,0.0820) (6.7692,0.0416) (7.0000,0.0000)};
\addplot [draw=black, line width=1pt, smooth]coordinates {(12,1) (7,1)};
\addplot [draw=black, line width=1pt, smooth]coordinates {(7.0000,1.0000) (6.6366,0.9993) (6.2956,0.9972) (5.9761,0.9937) (5.6771,0.9889) (5.3976,0.9829) (5.1367,0.9757) (4.8933,0.9672) (4.6664,0.9577) (4.4551,0.9471) (4.2585,0.9354) (4.0754,0.9227) (3.9049,0.9091) (3.7461,0.8946) (3.5979,0.8793) (3.4594,0.8631) (3.3295,0.8462) (3.2074,0.8285) (3.0919,0.8102) (2.8773,0.7717) (2.7760,0.7516) (2.6776,0.7310) (2.4851,0.6886) (2.3890,0.6668) (2.2918,0.6447) (2.0899,0.5998) (1.9833,0.5770) (1.8715,0.5541) (1.6288,0.5081) (1.4958,0.4851) (1.3537,0.4621) (1.2017,0.4393) (1.0386,0.4165) (0.8635,0.3940) (0.6754,0.3717) (0.2563,0.3279) (0.0234,0.3065) (-0.2265,0.2856) (-0.4943,0.2651) (-0.7810,0.2451) (-1.0875,0.2256) (-1.4150,0.2068) (-1.7643,0.1886) (-2.1364,0.1710) (-2.5324,0.1542) (-2.9531,0.1382) (-3.3997,0.1229) (-3.8730,0.1086) (-4.3741,0.0951) (-4.9039,0.0826) (-5.4635,0.0711) (-6.0538,0.0607) (-6.6758,0.0513) (-7.3304,0.0431) (-8.0188,0.0360) (-8.7418,0.0302) (-9.5004,0.0256) (-10.2960,0.0224) (-11.1290,0.0205) (-12.0000,0.0200)};
\node at (axis cs:-8,0.15) {\footnotesize{$\eta$}};
\node at (axis cs:-8,0.85) {\footnotesize{$\beta$}};
\end{axis}
\end{tikzpicture}
\qquad
\begin{tikzpicture}
\begin{axis}[legend style={at={(axis cs:72,30)},anchor=north west},
  tick label style={font=\scriptsize},
  axis y line=middle, 
  axis x line=middle,
  ytick={0,1},
  yticklabel style={anchor=south east},
  xtick={0,12},
  xtick style={draw=none},
  xticklabels={0,\color{white}$\tau$}, 
  xlabel={\small $\xi$}, ylabel={},
every axis x label/.style={
    at={(ticklabel* cs:1.0)},
    anchor=west,
},
every axis y label/.style={
    at={(ticklabel* cs:5.0)},
    anchor=south west
},
  set layers,
  width=8cm,
  height=4cm,
  xmin=-12,
  xmax=12,
  ymin=-0.1,
  ymax=1.2]
\addplot [draw=black, line width=0.6pt, smooth, on layer=axis background]coordinates {(-12,1)(12,1)}; 
\addplot [draw=black, line width=1pt, smooth]coordinates {(-12.0000,0.9700) (-11.0810,0.9705) (-10.2000,0.9697) (-9.3546,0.9674) (-8.5444,0.9638) (-7.7685,0.9590) (-7.0259,0.9529) (-6.3157,0.9456) (-5.6367,0.9371) (-4.9882,0.9276) (-4.3691,0.9170) (-3.7785,0.9054) (-3.2153,0.8928) (-2.6788,0.8793) (-2.1678,0.8649) (-1.6814,0.8497) (-1.2188,0.8337) (-0.7788,0.8169) (-0.3605,0.7995) (0.0369,0.7813) (0.4146,0.7626) (0.7733,0.7434) (1.1142,0.7236) (1.7461,0.6826) (2.0390,0.6615) (2.3179,0.6401) (2.8374,0.5964) (3.0799,0.5743) (3.3122,0.5519) (3.7500,0.5070) (3.9574,0.4845) (4.1585,0.4620) (4.5454,0.4172) (5.2852,0.3299) (5.4686,0.3088) (5.6534,0.2881) (6.0308,0.2480) (6.2253,0.2287) (6.4250,0.2100) (6.8438,0.1743) (7.0647,0.1576) (7.2947,0.1415) (7.5347,0.1263) (7.7856,0.1118) (8.0485,0.0983) (8.3242,0.0857) (8.6137,0.0740) (8.9180,0.0634) (9.2380,0.0538) (9.5748,0.0453) (9.9292,0.0379) (10.3020,0.0318) (10.6950,0.0269) (11.1080,0.0232) (11.5430,0.0209) (12.0000,0.0200)};
\addplot [draw=black, line width=1pt, smooth]coordinates {(12.0000,0.9700) (11.4010,0.9689) (10.8240,0.9665) (10.2680,0.9628) (9.7310,0.9580) (9.2135,0.9520) (8.7145,0.9449) (8.2335,0.9368) (7.7695,0.9275) (7.3221,0.9174) (6.8904,0.9062) (6.4739,0.8941) (6.0718,0.8812) (5.6834,0.8675) (5.3080,0.8529) (4.5938,0.8216) (4.2534,0.8050) (3.9234,0.7877) (3.2915,0.7514) (2.9883,0.7325) (2.6926,0.7131) (2.1211,0.6732) (1.8439,0.6527) (1.5716,0.6319) (1.0386,0.5896) (0.0000,0.5031) (-0.2579,0.4814) (-0.5165,0.4597) (-1.0386,0.4165) (-1.3034,0.3951) (-1.5716,0.3740) (-2.1211,0.3324) (-2.4037,0.3120) (-2.6926,0.2920) (-3.2915,0.2532) (-3.6030,0.2346) (-3.9234,0.2164) (-4.5938,0.1818) (-4.9451,0.1654) (-5.3080,0.1498) (-5.6834,0.1348) (-6.0718,0.1206) (-6.4739,0.1073) (-6.8904,0.0947) (-7.3221,0.0831) (-7.7695,0.0724) (-8.2335,0.0626) (-8.7145,0.0539) (-9.2135,0.0462) (-9.7310,0.0396) (-10.2680,0.0342) (-10.8240,0.0299) (-11.4010,0.0268) (-12.0000,0.0250)};
\node at (axis cs:-8,0.2) {\footnotesize{$\eta$}};
\node at (axis cs:-8,0.85) {\footnotesize{$\beta$}};
\end{axis}
\end{tikzpicture}
\caption{Admissible wavefront behaviors $(\eta,\beta)$ of~\eqref{eq-sys}: depicting a classical profile on the left and a sharp profile on the right.}
\label{fig-1}
\end{figure}
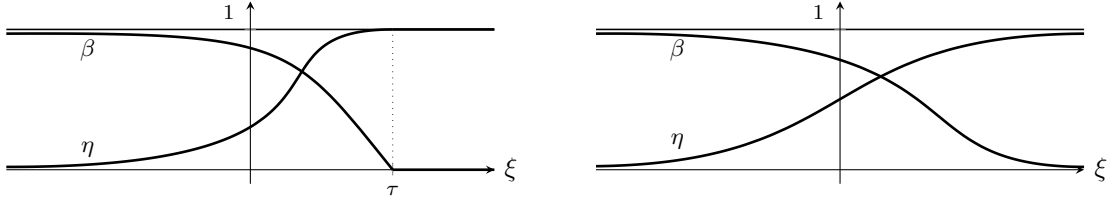

\begin{proposition}\label{prop28}
If $(\eta,\beta)$ is a wavefront of~\eqref{eq-sys}--\eqref{eq-bc}, then
\begin{equation}\label{est-c}
c\geq c_\sharp:=\max \left\{ \sqrt{L_1M_g \int_0^1 g(1-r)h(r)r\,\mathrm{d}r}, \sqrt{2L_1M_g \int_0^1 (1-r)g(1-r)h(r)r\,\mathrm{d}r} \right\}.
\end{equation}
\end{proposition}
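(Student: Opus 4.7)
The plan is to parametrize $\eta$ as a function of $\beta$ using the strict monotonicity of $\beta$ on $(-\infty,\tau)$ (Lemma~\ref{lemma1}), derive a single integral identity for $c^2$, and then split it into the two lower bounds composing $c_\sharp$ by means of two elementary inequalities for the ratio $E/(E+r-1)$.

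By Lemma~\ref{lemma1}, $\beta\colon(-\infty,\tau)\to(0,1)$ is a $C^1$, strictly decreasing bijection with $\beta(-\infty)=1$ and $\lim_{\xi\to\tau^-}\beta(\xi)=0$. Hence the function $E\colon(0,1)\to(0,1]$ defined by $E(r):=\eta(\beta^{-1}(r))$ is well posed and, by Lemma~\ref{lemma1}(iv), satisfies $E(r)\geq 1-r$; assumption $(A_2)$ then yields $g(E(r))\geq M_g\,g(1-r)$. Using Proposition~\ref{prop1},
\[
\beta'(\beta^{-1}(r))=-\frac{c\bigl(E(r)+r-1\bigr)}{g(E(r))\,h(r)}<0,\qquad r\in(0,1).
\]

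Integrating \eqref{eq-ode-1} on $(-\infty,\tau)$ and using Lemma~\ref{lemma1}(iii) gives $c=\int_{-\infty}^{\tau} f(\eta(\xi),\beta(\xi))\,\mathrm{d}\xi$. Performing the change of variables $r=\beta(\xi)$ on compact subintervals of $(-\infty,\tau)$ and passing to the limit, one obtains the identity
\[
c^2=\int_0^1 \frac{f(E(r),r)\,g(E(r))\,h(r)}{E(r)+r-1}\,\mathrm{d}r,
\]
whose left-hand side is finite, so the integral on the right converges (even though the denominator formally vanishes at $r=0,1$). Applying $f(E(r),r)\geq L_1 E(r) r$ from $(A_1)$ and the estimate for $g(E(r))$, one gets
\[
c^2\geq L_1 M_g \int_0^1 \frac{E(r)}{E(r)+r-1}\,g(1-r)\,h(r)\,r\,\mathrm{d}r.
\]

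The proof then reduces to two elementary inequalities on $(0,1)$:
\[
\text{(a)}\quad \frac{E(r)}{E(r)+r-1}\geq 1, \qquad\text{(b)}\quad \frac{E(r)}{E(r)+r-1}\geq 2(1-r).
\]
Inequality (a) follows from $r\leq 1$. Inequality (b), after multiplying by the nonnegative denominator, is equivalent to $E(r)(1-2r)\leq 2(1-r)^2$: trivial for $r\geq 1/2$, and for $r<1/2$ reduced via $E(r)\leq 1$ to the positivity of $2r^2-2r+1$ (negative discriminant). Substituting (a) and (b) separately into the integral bound and taking the maximum yields $c\geq c_\sharp$.

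The only delicate point is the rigorous justification of the change of variables at the endpoints $r=0,1$, where the denominator $E(r)+r-1$ vanishes; this is handled by first integrating on compact subintervals of $(-\infty,\tau)$ and invoking the finiteness of $\int_{-\infty}^{\tau} f\,\mathrm{d}\xi=c$ from Lemma~\ref{lemma1}(iii). Once the integral identity for $c^2$ is in hand, the rest is purely algebraic.
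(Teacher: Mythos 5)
Your proof is correct, but it reaches the two lower bounds by a genuinely different route than the paper. For the first bound the difference is mostly cosmetic: the paper works directly in the $\xi$ variable, writing $c\ge L_1\int\eta\beta\,\mathrm{d}\xi$, substituting $\eta$ from \eqref{eq-aux1}, and discarding the nonnegative term $c-c\beta$ — which, in your parametrization, is exactly the inequality $E/(E+r-1)\ge 1$. The real divergence is in the second bound. The paper follows~\cite{BNS-85,Ma-85}: it multiplies \eqref{eq-ode-2} once by $\beta$ to get $\tfrac{c}{2}\ge\int g h (\beta')^2\,\mathrm{d}\xi$, and once by $g(\eta)h(\beta)\beta'$ to get $c\int gh(\beta')^2\,\mathrm{d}\xi=-\int f\,gh\,\beta'\,\mathrm{d}\xi$, then chains the two energy identities. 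You instead extract everything from the single identity
\begin{equation*}
c^2=\int_0^1 \frac{f(E(r),r)\,g(E(r))\,h(r)}{E(r)+r-1}\,\mathrm{d}r,
\end{equation*}
obtained by the change of variables $r=\beta(\xi)$ in $c=\int_{-\infty}^{\tau}f\,\mathrm{d}\xi$, and the pointwise inequality $E/(E+r-1)\ge 2(1-r)$, which you verify by an elementary discriminant computation using only $1-r\le E\le 1$ (both of which the paper's Lemma~\ref{lemma1} supplies). Your handling of the endpoint degeneracy of the denominator — integrating on compact subintervals and using positivity of the integrand together with the finiteness of $\int f\,\mathrm{d}\xi$ — is adequate. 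What your approach buys is a unified and more elementary derivation: both constituents of $c_\sharp$ become consequences of one exact identity plus pointwise weight estimates, and the same template would produce further lower bounds from any other pointwise minorant of $E/(E+r-1)$. What the paper's energy-method version buys is independence from the strict monotonicity/invertibility of $\beta$ and a form that generalizes to settings where a first-order reduction in the $\beta$ variable is unavailable.
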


\begin{proof}
We will prove two different estimates as follows.

\noindent
\emph{First estimate.}
From assumption~$(A_1)$, \eqref{eq-aux1}, and~\eqref{c1}, we have 
\[\begin{split}
c=\int_{-\infty}^{+\infty}f\left(\eta(\xi),\beta(\xi)\right)\,\mathrm{d}\xi&\geq L_1 \int_{-\infty}^{+\infty}\eta(\xi)\beta(\xi)\,\mathrm{d}\xi\\
&= L_1  \int_{-\infty}^{+\infty}\beta(\xi)\frac{c-c\beta(\xi)-g\left(\eta(\xi)\right)h\left(\beta(\xi)\right)\beta'(\xi)}{c}\,\mathrm{d}\xi\\
&\geq L_1  \int_{-\infty}^{+\infty}\beta(\xi)\frac{-g\left(\eta(\xi)\right)h\left(\beta(\xi)\right)\beta'(\xi)}{c}\,\mathrm{d}\xi.
\end{split}\]
Using assumption~$(A_2)$ and Lemma~\ref{lemma1}, it thus follows
\[
c^2\geq L_1 M_g \int_{-\infty}^{+\infty}-g\left(1-\beta(\xi)\right)h\left(\beta(\xi)\right)\beta(\xi)\beta'(\xi)\,\mathrm{d}\xi.
\]
Applying the integration by substitution formula and~\eqref{eq-bc}, we finally get
\begin{equation}\label{est-c1}
c\geq \sqrt{L_1M_g \int_0^1 g(1-r)h(r)r\,\mathrm{d}r}.
\end{equation} 

\noindent \emph{Second estimate.}
Inspired by~\cite{BNS-85,Ma-85}, let us proceed in a different way. Multiplying \eqref{eq-ode-2} by $ \beta $  and integrating in $(-\infty,+\infty),$ according to Lemma~\ref{lemma1} and~\eqref{eq-bc}, we obtain
\[\begin{split}
0&= \int_{-\infty}^{+\infty} \left( \left(g(\eta(\xi))h(\beta(\xi))\beta'(\xi) \right)'	+c\beta'(\xi) + f(\eta(\xi),\beta(\xi))	\right)\beta(\xi) \, \mathrm{d}\xi \\
&= \left[g(\eta(\xi))h(\beta(\xi))\beta'(\xi) \beta(\xi)\right]_{-\infty}^{+\infty} - \int_{-\infty}^{+\infty}  g(\eta(\xi))h(\beta(\xi))\beta'(\xi)^2 \, \mathrm{d}\xi +\left[ \frac c 2 \beta(\xi)^2 \right]_{-\infty}^{+\infty}\\
&\quad+ \int_{-\infty}^{+\infty} f(\eta(\xi),\beta(\xi)) \beta(\xi) \, \mathrm{d}\xi  \\
&=   -  \int_{-\infty}^{+\infty}  g(\eta(\xi))h(\beta(\xi))\beta'(\xi)^2 \, \mathrm{d}\xi  - \frac c 2 + \int_{-\infty}^{+\infty} f(\eta(\xi),\beta(\xi)) \beta(\xi) \, \mathrm{d}\xi \\ 
&\leq   -  \int_{-\infty}^{+\infty}  g(\eta(\xi))h(\beta(\xi))\beta'(\xi)^2 \, \mathrm{d}\xi  - \frac c 2 +c \\
&=    -  \int_{-\infty}^{+\infty}  g(\eta(\xi))h(\beta(\xi))\beta'(\xi)^2 \, \mathrm{d}\xi  + \frac c 2. 
\end{split}\]
This leads to 
\begin{equation}\label{c2}
\frac{c}{2}\geq\int_{-\infty}^{+\infty}  g(\eta(\xi))h(\beta(\xi))\beta'(\xi)^2\, \mathrm{d}\xi.   
\end{equation}
On the other hand, multiplying~\eqref{eq-ode-2} by $g(\eta)h(\beta)\beta'$, integrating in~$(-\infty,+\infty)$, and using Lemma~\ref{lemma1} and~\eqref{eq-bc}, we have
\[\begin{split}
0&=  \int_{-\infty}^{+\infty} \left( \left(g(\eta(\xi))h(\beta(\xi))\beta'(\xi) \right)'	+c\beta'(\xi) + f(\eta(\xi),\beta(\xi))	\right) g(\eta(\xi))h(\beta(\xi)) \beta'(\xi) \, \mathrm{d}\xi \\
&=  \left[ \frac{1}{2} \left(g(\eta(\xi))h(\beta(\xi))\beta'(\xi)\right)^2 \right]_{-\infty}^{+\infty} + c \int_{-\infty}^{+\infty} g(\eta(\xi))h(\beta(\xi))\beta'(\xi)^2 \, \mathrm{d}\xi \\
&\quad+ \int_{-\infty}^{+\infty} f(\eta(\xi),\beta(\xi)) g(\eta(\xi))h(\beta(\xi))\beta'(\xi) \, \mathrm{d}\xi \\
&=   c \int_{-\infty}^{+\infty} g(\eta(\xi))h(\beta(\xi))\beta'(\xi)^2  \mathrm{d}\xi + \int_{-\infty}^{+\infty} f(\eta(\xi),\beta(\xi)) g(\eta(\xi))h(\beta(\xi))\beta'(\xi) \, \mathrm{d}\xi.
\end{split}\]
Using~\eqref{c2}, we deduce
\begin{equation} \label{c3}
\frac{c^2}{2}\geq- \int_{-\infty}^{+\infty} f(\eta(\xi),\beta(\xi)) g(\eta(\xi))h(\beta(\xi))\beta'(\xi) \, \mathrm{d}\xi. 
\end{equation}
Thanks to Lemma~\ref{lemma1}, and using~$(A_1)$ and $(A_2)$, we have
\[\begin{split}
f(\eta(\xi),\beta(\xi))&\geq L_1\eta(\xi)\beta(\xi)\geq L_1 \left(1-\beta(\xi)\right)\beta(\xi),	\quad \forall\xi\in\mathbb{R},\\
g(\eta(\xi))&\geq M_g g(1-\beta(\xi)), \quad \forall\xi\in\mathbb{R}.
\end{split}\]
From~\eqref{c3}, it thus follows 
\[
\frac{c^2}{2}\geq- L_1M_g\int_{-\infty}^{+\infty} \left(1-\beta(\xi)\right)\beta(\xi) g(1-\beta(\xi)) h(\beta(\xi)) \beta'(\xi) \, \mathrm{d}\xi.
\]
Applying the integration by substitution formula and~\eqref{eq-bc}, we obtain
\begin{equation}\label{est-c}
c\geq \sqrt{2 L_1M_g \int_0^1 (1-r) r g(1-r)h(r)\,\mathrm{d}r}.
\end{equation} 
Thus, \eqref{est-c1} and \eqref{est-c} yields the thesis.
\end{proof}

\begin{remark}\label{rem-csharp}
Under assumption $(A_2)$ and $(A_3)$, we recover many cases studied in the literature, where the diffusivity is expressed taking $g(s)=s^{\alpha}$ and $h(r)=r^{\gamma}$, with $ \alpha , \gamma > 0. $ In particular this implies that $ M_g =1. $ Even in this simple case it is not possible to establish which of the two estimates is better than the other one, i.e. which of the two lower bounds is the biggest one. Indeed, take $ \gamma = 1. $ Then
\[ \sqrt{L_1 M_g\int_0^1 g(1-r)h(r)r\,\mathrm{d}r} = \sqrt{L_1\int_0^1 (1-r)^{\alpha} r^2 \,\mathrm{d}r} = \sqrt{\frac {2L_1} {(\alpha +1)(\alpha + 2) (\alpha + 3)}} \]
while
\[\sqrt{2L_1M_g \int_0^1 (1-r)g(1-r)h(r)r\,\mathrm{d}r} =  \sqrt{2L_1\int_0^1 (1-r)^{\alpha + 1} r^2 \,\mathrm{d}r} = \sqrt{\frac {4L_1} {(\alpha +2)(\alpha + 3) (\alpha + 4)}}. \]
The first lower bound is bigger than the second one if and only if 
\[ \frac 1 {\alpha + 1} > \frac {2} {\alpha + 4}, \]
i.e., if and only if $ \alpha < 2. $

In particular, when $ \alpha = \gamma =1 $ and $ f(s,r) =sr, $ we recover the case studied by~\cite{SMGA-01} where an estimate for the lower bound for the speed is $c\geq\sqrt{\frac{1}{6}}$. They also guess that the threshold speed is $\sqrt{\frac{1}{2}}$. We notice that our best estimate in this case is the first one, i.e.
$c_\sharp= \sqrt{\int_0^1 (1-r) r^2\,\mathrm{d}r}= \sqrt{\frac{1}{12}}.$
\end{remark}

\section{Existence and uniqueness of wavefronts}\label{section-3}
In this section, our primary objective is two-fold. Firstly, we establish the existence of a unique semi-wavefront, $(\eta,\beta)$ of~\eqref{eq-sys}--\eqref{eq-bc1} on the negative half-line, for every speed $c > 0$. This existence and uniqueness, up to horizontal shifts, are detailed in Theorem \ref{th-m} and Theorem \ref{th-uniq}, respectively. Secondly, we prove the existence of a constant, $\rho_*>0$, such that for all $c \geq \rho_*$, we can extend the solution $(\eta,\beta)$ into a classical wavefront~\eqref{eq-sys}--\eqref{eq-bc} on the all real line. This result is contained in Theorem \ref{th-positive}.

In terms of assumptions $(A_1)$--$(A_3)$, apart from the regularity conditions in the proof of the uniqueness of the semi-wavefront, one can assume that $f$ is a Lipschitz continuous function.

\subsection{Existence of semi-wavefronts on the negative half-line}\label{sub-ex-n}

Given a wavefront $(\eta,\beta)$, we define the following quantities:
\[
\eta_0:=\eta(0), \qquad \beta_0:=\beta(0), \qquad \beta'_0:=\beta'(0).
\]

\begin{proposition}\label{proplus}
If $(\eta,\beta)$ is a wavefront of~\eqref{eq-sys}--\eqref{eq-bc}, then
\begin{equation} \label{etalus}
\eta_0 e^{\frac{L_2\xi}{c}}\leq \eta(\xi)\leq \eta_0 e^{\frac{L_1(1-\eta_0) \xi}{c}},	\quad\forall\xi\leq 0.
\end{equation}
\end{proposition}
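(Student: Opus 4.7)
\begin{proofB}[Plan]
The plan is to view \eqref{eq-ode-1} as an ODE for $\eta$ with $\beta$ playing the role of a known coefficient, and then apply a Gronwall-type comparison after dividing by $\eta$.

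First I would note that by Lemma~\ref{lemma1}(ii), $\eta(\xi)>0$ for every $\xi\in\mathbb{R}$, so that $\ln\eta$ is well-defined and $\eta_0>0$. Rewriting \eqref{eq-ode-1} as
\[
(\ln\eta)'(\xi)=\frac{f(\eta(\xi),\beta(\xi))}{c\,\eta(\xi)},
\]
assumption $(A_1)$ (extended by continuity to $[0,1]^2$) yields the pointwise two-sided bound
\[
\frac{L_1\,\beta(\xi)}{c}\le (\ln\eta)'(\xi)\le \frac{L_2\,\beta(\xi)}{c}, \qquad \forall\,\xi\in\mathbb{R}.
\]
Integrating this chain of inequalities on $[\xi,0]$ for an arbitrary $\xi\le 0$ gives
\[
\int_{\xi}^{0}\frac{L_1\,\beta(s)}{c}\,\mathrm{d}s \;\le\; \ln\eta_0-\ln\eta(\xi) \;\le\; \int_{\xi}^{0}\frac{L_2\,\beta(s)}{c}\,\mathrm{d}s.
\]

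Next I would bound the integrals of $\beta$. For the lower bound on $\eta(\xi)$, the upper bound $\beta(s)\le 1$ coming from Definition~\ref{def-wave} suffices: the right-hand integral is at most $-L_2\xi/c$, which after exponentiation gives $\eta(\xi)\ge \eta_0 e^{L_2\xi/c}$. For the upper bound on $\eta(\xi)$, I would combine Lemma~\ref{lemma1}(iv), which gives $\beta(s)\ge 1-\eta(s)$, with the monotonicity of $\eta$ from Lemma~\ref{lemma1}(ii): since $\eta$ is non-decreasing and $s\le 0$, we have $\eta(s)\le \eta_0$, hence $\beta(s)\ge 1-\eta_0$ on $[\xi,0]$. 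The left-hand integral is then bounded below by $-L_1(1-\eta_0)\xi/c$, and exponentiation yields $\eta(\xi)\le \eta_0 e^{L_1(1-\eta_0)\xi/c}$.

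I do not expect a serious obstacle: the argument is essentially a direct Gronwall/separation-of-variables estimate once the sign of $\eta$ and the monotonicity information from Lemma~\ref{lemma1} are in hand. The only mild subtlety is to note that the estimates remain trivially valid in the degenerate case $\tau\le 0$ (where $\eta_0=1$, the right inequality collapses to $\eta(\xi)\le 1$, and the left one still follows from $\beta\le 1$), so one can carry out the argument uniformly without splitting into cases.
\end{proofB}
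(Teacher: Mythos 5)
Your proposal is correct and follows essentially the same route as the paper: the paper bounds $\eta'$ via $(A_1)$ using $\beta\le 1$ for the lower estimate and $\beta\ge\beta_0\ge 1-\eta_0$ for the upper one, then applies Gronwall's lemma, which is exactly your separation-of-variables/logarithmic-derivative argument in slightly different clothing. The only cosmetic difference is that you invoke Lemma~\ref{lemma1}$(iv)$ pointwise instead of evaluating it at $\xi=0$ and using the monotonicity of $\beta$.
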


\begin{proof}
From~\eqref{eq-ode-1} and using $(A_1)$ and Lemma~\ref{lemma1}, for every $\xi\in(-\infty,0]$, we can deduce that
\[ \eta' (\xi) = \frac{f(\eta(\xi),\beta(\xi))}c \leq \frac {L_2 \eta(\xi)\beta(\xi)} c \leq \frac {L_2\eta(\xi)}c .\]
Thus, by Gronwall's lemma, we get that
\[\eta(\xi) \geq \eta_0 e^{\frac{L_2 \xi} c}, \quad \forall \xi\in(-\infty,0].
\]
Similarly, using $(A_1)$ and Lemma~\ref{lemma1}, for every $\xi\in(-\infty,0]$, we have
\[
\eta'(\xi) = \frac{f(\eta(\xi),\beta(\xi))}c \geq \frac {L_1 \eta(\xi)\beta(\xi)} c \geq \frac {L_1 \beta_0\eta(\xi)}c \geq \frac {L_1 (1-\eta_0)\eta(\xi)} c.
\]
Applying Gronwall's lemma, we have
\[\eta(\xi) \leq \eta_0 e^{\frac{L_1(1-\eta_0) \xi} c},\quad \forall\xi\in(-\infty,0]. \qedhere
 \]
\end{proof}

\begin{remark}\label{rem32}
Notice that condition \eqref{etalus} is satisfied by any solution $ \eta $ of \eqref{eq-ode-1} and any function $ \beta $ such that $ \beta_0 \leq \beta(\xi) \leq 1 $ for every $\xi\leq 0.$ 
\end{remark}

Let us fix $c>0$ and $\eta_0\in(0,1)$. We introduce
\[
\mathfrak{N}:=\left\{\eta\in C^1(-\infty,0] \colon \eta_0 e^{\frac{L_2\xi}{c}}\leq \eta(\xi)\leq \eta_0 e^{\frac{L_1(1-\eta_0) \xi}{c}},\, \eta'(\xi)\geq0,\, \forall \xi\in(-\infty,0]\right\}.
\]
We observe that $\mathfrak{N}$ is a closed and convex subset of the separable Fr\'echet space $C^1(-\infty,0]$.

\begin{proposition} \label{th-beta}
For every $\eta\in\mathfrak{N}$ and $c>0$ there exists a unique $\beta_0\in(0,1)$ such that the following problem
\begin{equation}\label{eq-beta}
\begin{cases}
\beta'=\dfrac{c\left(1-\beta-\eta\right)}{g(\eta)h(\beta)},\\
\beta(0)=\beta_0,
\end{cases}\end{equation}
has a solution $\beta$ satisfying $\beta(\xi)\in[\beta_0,1)$, $\beta'(\xi)<0 $ for all $\xi\in(-\infty,0]$, and $\beta(-\infty)=1.$
\end{proposition}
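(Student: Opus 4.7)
The plan is to fix $\eta\in\mathfrak{N}$ and $c>0$ throughout and regard~\eqref{eq-beta} as a non-autonomous scalar Cauchy problem parametrized by $\beta_0\in(0,1)$. Since $\eta(\xi)\geq\eta_0 e^{L_2\xi/c}>0$ for $\xi\leq0$ and $h(\beta)>0$ for $\beta\in(0,1]$, the right-hand side of~\eqref{eq-beta} is locally Lipschitz in $\beta$ on $(-\infty,0]\times(0,1]$, so Picard–Lindel\"of yields, for each $\beta_0\in(0,1)$, a unique maximal backward solution $\beta(\cdot;\beta_0)$. Because $\mathrm{sign}(\beta')=\mathrm{sign}(1-\beta-\eta)$, the requirement $\beta'<0$ already forces $\beta_0\in(1-\eta_0,1)$ and $\beta(\xi)>1-\eta(\xi)$ throughout, so the shooting parameter can be restricted to $(1-\eta_0,1)$.

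Next I would introduce the auxiliary variable $u:=\beta+\eta-1$; using~\eqref{eq-ode-1}, it solves the first-order linear ODE
\[
u'(\xi)+\frac{c\,u(\xi)}{g(\eta(\xi))h(\beta(\xi))}=\frac{f(\eta(\xi),\beta(\xi))}{c},
\]
whose forcing is non-negative and whose coefficient is strictly positive. Multiplying by the integrating factor $\mu(\xi):=\exp\!\bigl(\int_0^\xi c/(g(\eta(s))h(\beta(s)))\,\mathrm{d}s\bigr)$ yields $(\mu u)'\geq 0$. This monotonicity produces the representation formula $u(\xi)=\mu(\xi)^{-1}\int_{-\infty}^{\xi}\mu(s)f(\eta(s),\beta(s))/c\,\mathrm{d}s$, valid whenever $u(-\infty)=0$, and controls the location at which $u$ can vanish in backward time.

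For existence, I would perform a shooting argument by partitioning $(1-\eta_0,1)$ into
\[
\mathcal{A}:=\{\beta_0\colon \beta(\xi;\beta_0)=1\text{ for some finite }\xi<0\}, \quad
\mathcal{B}:=\{\beta_0\colon \beta(\cdot;\beta_0)\text{ exists on }(-\infty,0]\text{ with }\sup\beta<1\}.
\]
Continuous dependence on initial data makes both sets open. For $\beta_0$ slightly greater than $1-\eta_0$, the initial value $u(0)=\beta_0+\eta_0-1$ is small, and the identity $(\mu u)(\xi)=u(0)-\int_\xi^{0}\mu q\,\mathrm{d}s$ together with a uniform lower bound for $\mu q$ on a bounded interval $[-\delta,0]$ forces $u$ to vanish at some finite $\xi^-\in(-\delta,0)$; beyond $\xi^-$ one has $\beta<1-\eta$ and the solution remains trapped below $1-\eta<1$, placing $\beta_0\in\mathcal{B}$. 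For $\beta_0$ close to $1$, the estimate $|\beta'(\xi)|\gtrsim c\eta_0/(g(\eta_0)h(1))$ on a fixed backward neighborhood of $0$ shows that $\beta$ reaches $1$ in finite backward time, so $\beta_0\in\mathcal{A}$. Connectedness of $(1-\eta_0,1)$ then guarantees that $\mathcal{G}:=(1-\eta_0,1)\setminus(\mathcal{A}\cup\mathcal{B})$ is non-empty, and any $\beta_0\in\mathcal{G}$ produces a monotone solution on $(-\infty,0]$ trapped in $(1-\eta(\xi),1)$ whose limit at $-\infty$ is forced to equal $1$ by the definition of $\mathcal{G}$.

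The main obstacle is uniqueness of $\beta_0$, since a priori several admissible initial data could produce solutions all converging to $1$. The key is the degeneracy $g(0)=0$ at the limit equilibrium: combining the linear $u$-equation with the exponential asymptotics $\eta(\xi)\asymp e^{\lambda\xi}$ provided by~\eqref{etalus}, one extracts the sharp behavior $u(\xi)/\eta(\xi)\to 1$ as $\xi\to-\infty$, meaning every admissible profile approaches $(\eta,\beta)=(0,1)$ tangent to the same direction. Two such trajectories cannot coexist without coinciding, because their difference would satisfy a linearized equation around this common asymptotic regime admitting only the zero decaying solution. Making this rigorous amounts to a uniqueness statement for the strong-stable/center manifold at the degenerate equilibrium $(0,1)$, which is precisely the machinery announced in the introduction of Section~\ref{section-3} and will be the most delicate step of the proof.
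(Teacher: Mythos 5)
Your existence argument follows the same shooting skeleton as the paper, but two of its load-bearing claims are off. First, your set $\mathcal{B}$ is mis-defined: once a trajectory dips below $1-\eta$, it does \emph{not} stay trapped there on all of $(-\infty,0]$. Going backward, $\beta$ decreases while the numerator $c(1-\beta-\eta)$ stays bounded away from $0$ and the denominator $g(\eta)h(\beta)\to g(\eta)h(0)=0$, so $\beta'$ blows up and $\beta$ reaches $0$ at a \emph{finite} $\xi$, where the equation is singular and the maximal interval ends. Hence your $\mathcal{B}$ (``exists on $(-\infty,0]$ with $\sup\beta<1$'') is empty, and the connectedness argument collapses as written. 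The paper defines $\mathcal{B}$ precisely as this finite-time crash set (its Step~0 is the trapping-plus-blow-up lemma you need), which is what makes the dichotomy $\mathcal{A}\cup\mathcal{B}\neq(0,1)$ work. Second, your linear equation for $u=\beta+\eta-1$ uses $\eta'=f(\eta,\beta)/c$, but in this proposition $\eta$ is an \emph{arbitrary} element of $\mathfrak{N}$, not coupled to the $\beta$ being constructed; all you know is $\eta'\ge0$ and the exponential bounds, so the ``uniform lower bound for $\mu q$'' you invoke is not available. (The conclusion that the good $\beta_0$ gives a monotone global solution with $\beta(-\infty)=1$ also needs the argument of the paper's Step~4, not just ``the definition of $\mathcal{G}$''.)

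The uniqueness step is the genuine gap. You defer it to center/strong-stable manifold machinery at $(0,1)$, but that machinery does not apply here: for a fixed $\eta\in\mathfrak{N}$ the equation for $\beta$ is non-autonomous with a prescribed, essentially arbitrary coefficient $\eta(\xi)$, so there is no planar autonomous vector field with an equilibrium at $(0,1)$ to linearize. The paper uses the center-manifold argument only later (Theorem~\ref{th-uniq}), for the \emph{coupled} system where $\eta$ actually solves~\eqref{eq-ode-1}, and even there it must separately prove the center manifold is unique — tangency of all trajectories to a common direction (your ``$u/\eta\to1$'', which in any case should be $u=o(\eta)$, cf.\ Lemma~\ref{lem-dotB}) does not by itself force coincidence. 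The paper's actual uniqueness proof for this proposition is elementary and unrelated: if $\rho<\beta_0$ both gave admissible solutions, then $\beta_\rho<\beta$ everywhere by uniqueness of the Cauchy problem, and
\[
\int_{-\infty}^{0}\bigl(h(\beta)\beta'-h(\beta_\rho)\beta_\rho'\bigr)\,\mathrm{d}\xi
=\int_{\rho}^{\beta_0}h(y)\,\mathrm{d}y>0,
\qquad
\int_{-\infty}^{0}\frac{c}{g(\eta)}\bigl(\beta_\rho-\beta\bigr)\,\mathrm{d}\xi<0,
\]
while~\eqref{eq-beta} makes these two integrals equal — a contradiction that crucially exploits the fact that both candidates share the \emph{same} $\eta$. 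Without this (or a worked-out substitute), your proposal establishes at most existence.
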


\begin{proof}
Let us fix $\beta_0\in(0,1)$ and consider the solution $\beta$ of~\eqref{eq-beta} defined in its maximal interval of existence. We define 
\[\begin{split}
&\mathcal{A}:=\{\beta_0\in(0,1)\colon \exists\xi_0<0\text{ s.t. }\beta(\xi_0)=1\},\\
&\mathcal{B}:=\left\{\beta_0\in(0,1)\colon \exists\xi_0<0\text{ s.t. }\mathrm{lim}_{\xi\to\xi_0^+}\beta(\xi)=0\right\}.
\end{split}\]

\noindent\emph{Step $0$. If there is $\xi_1\leq0$ such that $\beta'(\xi_1)>0$, then $\beta_0\in\mathcal{B}$.} Consider the function $ \Gamma(\xi)= 1 - \beta(\xi) - \eta(\xi)$, then 
\[
\beta'(\xi)=\frac{c\Gamma(\xi)}{g(\eta(\xi))h(\beta(\xi))}.
\]
Since $\beta'(\xi_1)>0$, then we obtain $\Gamma(\xi_1)>0$. Let us show that $\beta'(\xi)>0$, for every $\xi$ in the maximal interval of existence of $\beta$. Suppose by contradiction that there exists $\xi_0<\xi_1$ such that $\beta'(\xi_0)=0 $ and $ \beta'(\xi) > 0 $ for every $ \xi \in (\xi_0, \xi_1). $  Then, by definition of~$ \mathfrak{N},  \Gamma'(\xi) < 0 $ for every $ \xi \in (\xi_0, \xi_1).$ Thus, being $\Gamma$ decreasing in $(\xi_0,\xi_1)$, we have
\[0= \beta'(\xi_0) = \frac {c\Gamma (\xi_0)}{g(\eta(\xi_0)) h(\beta(\xi_0))} > \frac  {c\Gamma (\xi_1)}{g(\eta(\xi_0)) h(\beta(\xi_0))} >0,
\]
which is a contradiction. Moreover, by the same reasoning, for every $\xi$ in the maximal interval of existence of $\beta$, we have
\[\beta'(\xi) > \frac  {c\Gamma (\xi_1)}{\max_{[0,\eta_0]} g \max_{[0,\beta(\xi_1)]}h} > 0. \]
Accordingly, there is $\xi_2<\xi_1$ such that $\beta(\xi_2)=0$ and so $\beta_0 \in\mathcal{B}. $ 

\noindent\emph{Step $1$. $\mathcal{A}$ is a non-empty open interval.} Consider the solution $\beta$ of~\eqref{eq-beta} defined in its maximal interval of existence such that $\beta(0)=1$. Notice that, from~\eqref{eq-beta}, $(A_2)$, and~$(A_3)$, $\beta'(0)<0$. We now claim that $\beta(\xi)>1$ for every $\xi<0$. By contradiction, let $\xi_1<0$ such that $\beta(\xi)>1$ for every $\xi\in(\xi_1,0)$ and $\beta(\xi_1)=1$. Then, since $\eta\in\mathfrak{N}$ and $c>0$, from~\eqref{eq-beta} follows $\beta'(\xi_1)<0$ which is a contradiction. Using the continuous dependence of solutions on initial data, there exists $\delta > 0$ such that for any $\beta_0 \in (1-\delta,1)$, the solution $\beta$ of~\eqref{eq-beta} defined on its maximal existence interval is such that there exists $\xi_0$ such that $\beta(\xi_0)=1$. Thus $\mathcal{A} \not= \emptyset. $ Using the continuous dependence on initial data again, we can prove that $\mathcal{A}$ is open. To prove that $\mathcal{A}$ is an interval, it is sufficient to show that it is connected. Take $ \beta_0^1,\beta_0^2 \in\mathcal{A}$ and $\beta_0^3 $ such that $ \beta_0^1 < \beta_0^3 < \beta_0^2. $ The uniqueness of the solution of the Cauchy problem \eqref{eq-beta} implies that the trajectory of the solution corresponding to $\beta_0^3$ lies between the trajectories corresponding to $ \beta_0^1 $ and $\beta_0^2, $ which proves that $ \beta_0^3 \in\mathcal{A}. $ 

\noindent\emph{Step $2$. $\mathcal{B}$ is a non-empty interval.} 
Take $ \beta_0 < 1 -\eta_0 $ and consider the solution $\beta$ of~\eqref{eq-beta} defined in its maximal interval of existence such that $\beta(0)=\beta_0. $ Then, by \eqref{eq-beta}, $(A_2)$, and $(A_3)$, we get that $ \beta'(0)>0$. From~\textit{Step~$0$}, we deduce $\beta_0 \in\mathcal{B}. $ Using the uniqueness of the solution of the Cauchy problem, it is possible to prove that also $\mathcal{B}$ is connected. 

\noindent\emph{Step $3$. $\mathcal{A}\cap \mathcal{B}=\emptyset$.}
Suppose that there exists $ \beta_0 \in \mathcal{A} \cap \mathcal{B}. $ Denote by $\beta$ the solution of~\eqref{eq-beta} defined in its maximal interval of existence such that $\beta(0) = \beta_0, \xi_1 = \inf\{\xi\colon \beta(\xi)>0 \} $ and by $ \xi_2 = \max \{ \xi\colon\beta(\xi)=1\}. $ Then, according to the definition of $\mathfrak{N}$ and using $(A_2)$, and $(A_3)$, we have   
\[\lim_{\xi \to \xi_1} \beta'(\xi) = + \infty.\]
Therefore, the maximal interval of existence of $\beta$ is $(\xi_1,0),$ i.e. $\xi_2> \xi_1.$ 
Then, by~\eqref{eq-beta}, $(A_2)$, and $(A_3)$, we get that $ \beta'(\xi_2)<0, $ and so $ \beta (\xi)> 1 $, for every $\xi$ in a left neighbourhood of $\xi_2$. Since $ \beta(\xi_1)=0, $ there exists $\xi_3 \in (\xi_1,\xi_2)$ such that $ \beta'(\xi_3) =0 $ and $ \beta(\xi_3)>1, $ which leads to a contradiction reasoning like in \textit{Step~$1$}. 

\noindent\emph{Step $4$. If $\beta(0)=\inf\mathcal{A}$ then $\beta(\xi)<1$, for every $\xi<0$, and $\beta(-\infty)=1$.} Let $\beta$ be the solution such that $\beta(0)=\inf\mathcal{A}$. Then, by \textit{Step~$1$}, we have $\beta(\xi)<1$, for every $\xi<0$. We prove that $\beta'(\xi)\leq 0$ for every $\xi<0$. Suppose by contradiction that there is $\xi_1<0$ such that $\beta'(\xi_1)>0$. Then, from~\textit{Step~$0$}, it follows that $\inf\mathcal{A}\in\mathcal{B}$. Therefore, there is $\xi_2<\xi_1$ such that $\beta(\xi_2)=0<\inf\mathcal{A}$. By the continuous dependence of the solutions on the initial data, there is $\rho$ in a right neighborhood of $\inf\mathcal{A}$ such that 
\[
\beta_\rho(\xi_2)\leq \inf\mathcal{A}<\rho,
\]
where $\beta_\rho$ is the solution of~\eqref{eq-beta} associated with the initial condition $\beta(0)=\rho$. By the mean value theorem, there is $\xi_3>\xi_2$ such that $\beta'_\rho(\xi_3)>0$. From~\textit{Step~$0$} and~\textit{Step~$3$}, $\rho\in\mathcal{B}$ and so $\rho\not\in\mathcal{A}$, which is a contradiction because $\mathcal{A}=(\inf\mathcal{A},1)$. Thus, we obtain that $\beta'(\xi)\leq0$, for every $\xi<0$, that is $1-\beta(\xi)-\eta(\xi)\leq0$ thanks to Lemma~\ref{lemma0}, $(A_2)$, and $(A_3)$. It now follows $1-\eta(\xi)\leq\beta(\xi)<1$ and so, by passing to the limit as $\xi\to-\infty$, it follows $\beta(-\infty)=1$ (according to the definition of $\mathfrak{N}$). Arguing as in Lemma~\ref{lemma1}, we conclude that $\beta'(\xi)<0$, for every $\xi<0$.

\noindent\emph{Step $5$. $\inf\mathcal{A}=\sup\mathcal{B}$.} Suppose by contradiction that $\inf\mathcal{A}>\sup\mathcal{B}$. Let $\beta$ be the solution of~\eqref{eq-beta} associated with the initial condition $\beta(0)=\inf\mathcal{A}$ defined in $(-\infty,0$). For any given $\rho\in(\sup\mathcal{B},\inf\mathcal{A})$, let $\beta_\rho$ be the solution of~\eqref{eq-beta} associated with the initial condition $\beta(0)=\rho$ defined in its maximal interval of existence. Since $\rho\not\in\mathcal{B}$, then $\beta'_\rho(\xi)\leq0$ for every $\xi$ in its maximal interval of existence. Since $\rho\not\in\mathcal{A}$, we can argue as in~\textit{Step~$4$} and deduce that $\beta_\rho$ is defined in $(-\infty,0)$ and $\beta_\rho(-\infty)=1$. By the uniqueness of the solutions of the associated Cauchy problems, we have $\beta_\rho(\xi)<\beta(\xi)$, for every $\xi\leq0$. Hence, by using~\eqref{eq-beta}, we obtain
\[\begin{split}
0>\int^0_{-\infty}\frac{c}{g(\eta(\xi))}\left(\beta_\rho(\xi)-\beta(\xi)\right)\,\mathrm{d}\xi&=\int_{-\infty}^0 \left(h(\beta(\xi))\beta'(\xi)-h(\beta_\rho(\xi))\beta'_\rho(\xi)\right)\,\mathrm{d}\xi\\
&=\int_{\rho}^{\inf\mathcal{A}}h(y)\,\mathrm{d}y>0,
\end{split}\]
which is a contradiction. This concludes the proof.
\end{proof}

\begin{remark}\label{rem-i}
If $\beta_0<1-\eta_0$, then there is $\xi_0\in(-\infty,0)$ such that $\beta(\xi_0)=0$, that is $\beta_0\in \mathcal{B}$.
\end{remark}

Using fixed-point theory in separable Fréchet spaces, we prove what follows. 

\begin{theorem}\label{th-m}
For every $c>0$ and for every $\eta_0\in(0,1)$, there is a semi-wavefront $(\eta,\beta)$ of~\eqref{eq-sys} defined in $(-\infty,0]$ and satisfying~\eqref{eq-bc1}.
\end{theorem}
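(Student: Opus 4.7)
The plan is to apply a Schauder--Tychonoff fixed-point argument in the separable Fréchet space $C^1(-\infty,0]$ equipped with the topology of uniform convergence of functions and their first derivatives on compact subsets. For each $\eta\in\mathfrak{N}$, Proposition~\ref{th-beta} associates a unique $\beta_0[\eta]\in(0,1)$ and a strictly decreasing solution $\beta[\eta]\colon(-\infty,0]\to[\beta_0[\eta],1)$ of~\eqref{eq-beta} with $\beta[\eta](-\infty)=1$. Using $\beta[\eta]$, I would define the operator $T$ by $T(\eta):=\tilde\eta$, where $\tilde\eta$ solves the Cauchy problem
\[
c\tilde\eta'=f(\tilde\eta,\beta[\eta]),\qquad \tilde\eta(0)=\eta_0.
\]
Global existence on $(-\infty,0]$ follows from the local Lipschitz continuity of $f$ together with the a priori bound $0<\tilde\eta\leq\eta_0$. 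A fixed point of $T$ will provide the required semi-wavefront together with $\beta^*:=\beta[\eta^*]$.

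First I would verify the inclusion $T(\mathfrak{N})\subset\mathfrak{N}$. Monotonicity $\tilde\eta'\geq 0$ is immediate from $(A_1)$. For the exponential bounds, Remark~\ref{rem-i} combined with \emph{Step 5} of the proof of Proposition~\ref{th-beta} forces $\beta_0[\eta]\geq 1-\eta_0$, so $\beta[\eta](\xi)\in[1-\eta_0,1)$ on $(-\infty,0]$. Hence $L_1(1-\eta_0)\tilde\eta\leq c\tilde\eta'\leq L_2\tilde\eta$, and integrating backward from $\xi=0$ via Gronwall's inequality yields exactly the bracket defining $\mathfrak{N}$; this is the content of Remark~\ref{rem32} applied to the pair $(\tilde\eta,\beta[\eta])$.

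The hard part will be the continuity of $T$. Relative compactness of $T(\mathfrak{N})$ in $C^1(-\infty,0]$ is comparatively routine: on every compact interval $[-R,0]$ the family $\{\tilde\eta\}$ is uniformly bounded by $\eta_0$, while the derivatives $\tilde\eta'=f(\tilde\eta,\beta[\eta])/c$ are uniformly bounded and equicontinuous, so Ascoli--Arzelà together with a diagonal extraction yields relative compactness in the Fréchet topology. Continuity amounts to showing that $\eta_n\to\eta$ in $\mathfrak{N}$ implies $\beta[\eta_n]\to\beta[\eta]$ locally uniformly on $(-\infty,0]$; once this is known, standard continuous dependence on parameters for~\eqref{eq-ode-1} delivers $T(\eta_n)\to T(\eta)$. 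To obtain convergence of the $\beta[\eta_n]$, my approach would be to exploit the characterization $\beta_0[\eta]=\inf\mathcal{A}[\eta]=\sup\mathcal{B}[\eta]$ from \emph{Step 5}: the sets $\mathcal{A}[\eta]$ and $\mathcal{B}[\eta]$ depend upper/lower semicontinuously on $\eta$ via continuous dependence in the nondegenerate region $\{g(\eta)h(\beta)>0\}$, which pins down $\beta_0[\eta_n]\to\beta_0[\eta]$ by a squeezing argument, and continuous dependence for the Cauchy problem~\eqref{eq-beta} then propagates the convergence to the whole trajectory on every bounded interval.

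With $T$ continuous and $T(\mathfrak{N})$ relatively compact inside the closed convex subset $\mathfrak{N}$, the Schauder--Tychonoff theorem provides a fixed point $\eta^*=T(\eta^*)\in\mathfrak{N}$. Setting $\beta^*:=\beta[\eta^*]$, the pair $(\eta^*,\beta^*)$ satisfies~\eqref{eq-ode-1}--\eqref{eq-ode-2} on $(-\infty,0]$. The upper bound in $\mathfrak{N}$ yields $\eta^*(\xi)\leq\eta_0 e^{L_1(1-\eta_0)\xi/c}\to 0$ as $\xi\to-\infty$, while Proposition~\ref{th-beta} ensures $\beta^*(-\infty)=1$, so the boundary condition~\eqref{eq-bc1} holds. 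Strict monotonicity and non-constancy of both components are inherited from $\mathfrak{N}$ and Proposition~\ref{th-beta}, hence by Definition~\ref{def-wavefront} the pair $(\eta^*,\beta^*)$ is a semi-wavefront on $(-\infty,0]$, as required.
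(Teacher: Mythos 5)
Your proposal is correct and follows the same architecture as the paper's proof: the same invariant set $\mathfrak{N}$, the same composite operator (written in the paper as $\mathcal{T}=\mathcal{W}\circ\mathcal{V}$), the same invariance check via the Gronwall bounds of Proposition~\ref{proplus} together with $\beta_0[\eta]\geq 1-\eta_0$, the same Ascoli--Arzel\`a compactness argument, and the Schauder--Tychonoff theorem at the end. The one genuine divergence is how continuity of the operator is obtained. The paper never proves continuity directly: it shows that $\mathcal{T}$ is compact and sequentially closed (passing to the limit in the integral identities defining $\mathcal{V}$ and $\mathcal{W}$ along sequences where both inputs and outputs converge), and then uses the fact that a compact map with sequentially closed graph into a metrizable space is continuous. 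This device sidesteps the need to prove $\beta_0[\eta_n]\to\beta_0[\eta]$ head-on: once a subsequence of the outputs converges, the limit is a monotone solution of~\eqref{eq-beta} confined to $[1-\eta_0,1]$, so its value at $0$ can lie in neither $\mathcal{A}$ nor $\mathcal{B}$ and must equal $\beta_0[\bar\eta]$. Your route instead establishes $\beta_0[\eta_n]\to\beta_0[\eta]$ directly through semicontinuity of $\mathcal{A}[\eta]$ and $\mathcal{B}[\eta]$ plus squeezing; this is viable, but the lower estimate needs a word of justification, since membership in $\mathcal{B}[\eta]$ must be detected away from the degenerate level $\beta=0$, namely via the open condition that $\beta'>0$ somewhere in $\{g(\eta)h(\beta)>0\}$ (\emph{Step~0} of Proposition~\ref{th-beta}), which is what makes the perturbation argument in the nondegenerate region legitimate. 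Two smaller points you gloss over but which are needed and available: equicontinuity of the derivatives $\tilde\eta'$ on compact sets requires the uniform bound on $\beta[\eta]'$ coming from $\eta\geq\eta_0 e^{L_2\xi/c}$ and $\beta[\eta]\geq 1-\eta_0$ (the constant $K_2$ in the paper's proof), and the identification of the fixed point as a semi-wavefront uses that~\eqref{eq-beta} is the first integral~\eqref{eq-aux1}, so that differentiating it along the fixed point recovers~\eqref{eq-ode-2}.
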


\begin{proof}
Let us fix $c>0$ and $\eta_0\in(0,1)$. We consider $\eta\in\mathfrak{N}$. Then, we take $\tilde{\beta}$ the solution of~\eqref{eq-beta} satisfying $\tilde{\beta}(-\infty)=1$ (whose existence is guaranteed by Proposition~\ref{th-beta}). We thus introduce the operator $\mathcal{V}\colon\mathfrak{N}\to C^1(-\infty,0]$ such that $\mathcal{V}(\eta)=\tilde{\beta}$.

Next, we consider the problem
\begin{equation}\label{eq-eta}
\begin{cases}
\zeta'=\dfrac{f(\zeta,\tilde{\beta})}{c},\\
\zeta(0)=\eta_0.
\end{cases}\end{equation}
From $(A_1)$, we observe that problem~\eqref{eq-eta} has a unique solution $\tilde{\eta}$. We therefore consider $\mathcal{W}\colon\mathcal{V}(\mathfrak{N})\subseteq C^1(-\infty,0]\to\mathfrak{N}$ such that $\mathcal{W}(\tilde\beta)=\tilde{\eta}$.

In this manner, we define the operator
\[\mathcal{T}\colon\mathfrak{N}\to C^1(-\infty,0], \quad \mathcal{T}=\mathcal{W}\circ\mathcal{V}.\]
We observe that $ \mathcal T $ is well-defined. Moreover, thanks to Proposition~\ref{th-beta}, $\tilde{\beta}(\xi)\in[\beta_0,1),$ for every $\xi\in(-\infty,0]$. From Proposition~\ref{proplus} and Remark~\ref{rem32}, we deduce that $\mathcal{T}(\mathfrak{N})\subseteq\mathfrak{N}$. 

Next, we show that $\mathcal{T}$ is compact. Let us consider $(\tilde{\eta}_j)_j\subseteq\mathcal{T}(\mathfrak{N}).$ Thus, we notice that $(\tilde{\eta}_j)_j$ is equibounded since $\mathcal{T}(\mathfrak{N})\subseteq\mathfrak{N}$ and $\mathfrak{N}$ is bounded. First, we show that $(\tilde{\eta}'_j)_j$ is equibounded. For every $j$, associated to $\tilde{\eta}_j$ there is $\eta_j\in\mathfrak{N}$ and $\tilde{\beta}_j$ which is the solution of~\eqref{eq-beta} with $\tilde{\beta}_j(-\infty)=1$ such that $\tilde{\eta}_j$ is the solution of~\eqref{eq-eta}. Hence, it holds
\begin{equation}\label{eq-ee2}
\tilde\eta'_j(\xi)=\dfrac{f(\tilde\eta_j(\xi),\tilde{\beta}_j(\xi))}{c}\leq\frac{\max_{[0,\eta_0]\times[0,1]} f}{c}=:K_1, \quad\xi\leq 0,
\end{equation}
and so $(\tilde{\eta}'_j)_j$ is equibounded which in turns implies that $(\tilde{\eta}_j)_j$ is equicontinous.
Second, we prove that $(\tilde\eta'_j)_j$ is equicontinuous on compact subsets of $(-\infty,0]$. To this purpose, let us fix $T>0$ and consider $-T\leq\xi<\xi_1\leq0$. Since $f$ is a Lipschitz function (with constant $L$), we deduce by the Mean Value Theorem
\begin{equation}\label{eq-ee}
\begin{split}
|\tilde\eta'_j(\xi)-\tilde\eta'_j(\xi_1)|&=\frac{\left|f\left(\tilde\eta_j(\xi),\tilde{\beta}_j(\xi)\right) - f\left(\tilde\eta_j(\xi_1),\tilde{\beta}_j(\xi_1)\right)	\right|}{c}\\
&\leq \frac{L\left(\left|	\tilde\eta_j(\xi)-\tilde\eta_j(\xi_1)\right| +\left|\tilde\beta_j(\xi)-\tilde\beta_j(\xi_1)\right|	\right)		}{c}\\
&= \frac{L\left(\left|	\tilde\eta'_j(\xi^\sharp)\right| +\left|\tilde\beta'_j(\xi^\star)\right|	\right)\left|\xi-\xi_1	\right|		}{c},\quad \text{for some }\xi^\sharp,\xi^\star\in(\xi,\xi_1).
\end{split}\end{equation}
Since $\eta_j\in\mathfrak{N}$ and $\xi^\star\in[-T,0]$, we have $\eta_0\geq\eta_j(\xi^\star)\geq \eta_0 e^{\frac{-L_2 T}{c}}=:m$. Moreover, from Remark~\ref{rem-i}, we have $1\geq\tilde\beta_j(\xi^\star)\geq \tilde\beta_j(0)\geq1-\eta_0$. Thanks to~\eqref{eq-beta} and using these inequalities, we obtain
\[
\left|\tilde\beta'_j(\xi^\star)\right|=\frac{c\left(\eta_j(\xi^\star)+\tilde\beta_j (\xi^\star)-1	\right)}{g\left(\eta_j(\xi^\star)\right) h\left((\tilde\beta_j(\xi^\star)\right)}\leq \frac{c\eta_0}{\left(\min_{\left[m,\eta_0\right]}g\right) \left(\min_{\left[1-\eta_0,1\right]}h\right)}=:K_2.
\]
Thank to the previous inequality and~\eqref{eq-ee2}, from~\eqref{eq-ee} we have
\[
|\tilde\eta'_j(\xi)-\tilde\eta'_j(\xi_1)|\leq \frac{L (K_1+K_2)}{c}\left| \xi-\xi_1\right|.
\]
By the Ascoli-Arzel\`a Theorem, we obtain that $(\tilde\eta_j)_j$ has a subsequence converging in $C^1(-\infty,0]$, and so $\mathcal{T}$ is compact. 

At last, we show that $\mathcal{T}$ is sequentially closed.  It is sufficient to prove that the maps $\mathcal{V}$ and $\mathcal{W}$ are sequentially closed. To prove that $\mathcal{V}$ is sequentially closed we consider $(\tilde{\beta}_j)_j\subseteq \mathcal{V}(\mathfrak{N})$. Then, for every $j$, there is $\eta_j\in\mathfrak{N}$ such that $\mathcal{V}(\eta_j)=\tilde{\beta}_j$. If $\eta_j\to\bar\eta$ and $\tilde\beta_j\to\bar\beta$ in $C^1(-\infty,0]$, then we aim to prove that $\bar{\beta}=\mathcal{V}(\bar\eta)$.
Since $\tilde\beta_j$ is solution of~\eqref{eq-beta}, it follows that for every $\xi\in(-\infty,0]$
\[
\tilde\beta_j (\xi)=\tilde\beta_j(0)-\int_\xi^0 \frac{c\left(1-\eta_j(\sigma)-\tilde\beta_j (\sigma)\right)	}{g\left(\eta_j(\sigma)\right) h((\tilde\beta_j(\sigma))	}\,\mathrm{d}\sigma \to
\bar\beta(0)-\int_\xi^0 \frac{c\left(1-\bar\eta(\sigma)-\bar\beta (\sigma)\right)	}{g\left(\bar\eta(\sigma)\right) h((\bar\beta(\sigma))	}\,\mathrm{d}\sigma,
\] 
because the topology in $C^1(-\infty,0]$ is equivalent to the uniform convergence on compact subsets. As a consequence, $\bar\beta=\mathcal{V}(\bar\eta)$.

We are left to prove that $\mathcal{W}$ is sequentially closed. To do so, we consider $(\tilde{\eta}_j)_j\subseteq \mathfrak{N}$. Then, for every $j$, there is $\tilde\beta_j\in \mathcal{V}(\mathfrak{N})$ such that $\mathcal{W}(\tilde\beta_j)=\tilde{\eta}_j$. If $\tilde\beta_j\to\bar\beta$ and $\tilde\eta_j\to\bar\eta$ in $C^1(-\infty,0]$, then we aim to prove that $\bar{\eta}=\mathcal{W}(\bar\beta)$.
Since $\tilde\eta_j$ is solution of~\eqref{eq-eta}, it follows that, for every $\xi\in(-\infty,0]$,
\[
\tilde\eta_j (\xi)=\tilde\eta_j(0)-\int_\xi^0 \frac{f\left(\tilde\eta(\sigma),\tilde\beta(\sigma)\right)}{c}\,\mathrm{d}\sigma \to
\bar\eta(0)-\int_\xi^0 \frac{f\left(\bar\eta(\sigma),\bar\beta(\sigma)\right)}{c}\,\mathrm{d}\sigma.
\] 
We obtain similarly that $\bar{\eta}=\mathcal{W}(\bar\beta)$. Thus, $\mathcal{T}$ is sequentially closed and since it is also compact, it follows that $\mathcal{T}$ is continuous. 

Then, we apply the Schauder-Tychonoff fixed point theorem, and we get a fixed point $\tilde{\eta}$ for~$\mathcal{T}$. Since $\tilde{\eta}\in\mathfrak{N}$, then $\tilde{\eta}(-\infty)=0$ and so we conclude that the pair $(\tilde{\eta},\tilde{\beta})$ is a semi-wavefront of~\eqref{eq-sys} in $(-\infty,0]$ satisfying~\eqref{eq-bc1}. 
\end{proof}

\subsection{Uniqueness and properties of semi-wavefronts on the negative half-line}\label{sub-uniq}
In this section, we employ a desingularization technique on system~\eqref{eq-sys} similar to one used in~\cite{GaMa-22,SMGA-01}. To ensure uniqueness, we proceed to demonstrate Theorem~\ref{th-uniq} by adopting the methodology outlined in~\cite{AiHuang-07}.

\begin{description}[leftmargin=*]
\item[Reduction to a desingularized first-order problem.]
For every $c>0$ and $\eta_0\in(0,1)$, let $(\eta,\beta)$ be a semi-wavefront on the interval $(-\infty,0]$. We consider the function $y=\Phi(\xi)$ such that 
\[\begin{cases}
\Phi'(\xi)=\dfrac{1}{g(\eta(\xi))h(\beta(\xi))}, \quad \xi\in(-\infty,0],\\
\Phi(0)=0.
\end{cases}\]
Notice that the function $\Phi$ is a diffeomorphism with $\Phi(-\infty)=-\infty$ because, by Remark~\ref{rem-i}, we have
\[
\Phi'(\xi)\geq\dfrac{1}{\max_{[0,\eta_0]}g \max_{[1-\eta_0,1]}h}>0.
\]
Hence, we can define
\[
p(y):=\eta\left(\Phi^{-1}(y)\right),\quad q(y):=\beta\left(\Phi^{-1}(y)\right)+\eta\left(\Phi^{-1}(y)\right)-1.
\]
Then, for every $y\in(-\infty,0]$, $(p,q)$ solves the following first-order problem
\begin{equation}\label{eq-manifold}
\begin{cases}
\dot p(y)=\dfrac{f(p(y),q(y)-p(y)+1) g(p(y)) h(q(y)-p(y)+1)}{c},\\
\dot q(y)=-c q(y)+\dfrac{f(p(y),q(y)-p(y)+1 ) g(p(y)) h(q(y)-p(y)+1)}{c}.
\end{cases}
\end{equation}
Since $\Phi^{-1}(-\infty)=-\infty$, we notice that $(p,q)$ satisfies the conditions
\begin{equation}\label{equilibrium}
p(-\infty)=0, \quad q(-\infty)=0.
\end{equation}
\end{description}

\begin{theorem}\label{th-uniq}
For every $c>0$ there is at most one (up to shift) semi-wavefront $(\eta,\beta)$ of~\eqref{eq-sys} defined in $(-\infty,0]$ and satisfying~\eqref{eq-bc1}.
\end{theorem}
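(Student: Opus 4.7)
The plan is to exploit the desingularization already performed in the excerpt. A semi-wavefront $(\eta,\beta)$ of~\eqref{eq-sys}--\eqref{eq-bc1} on $(-\infty,0]$ corresponds, via $y=\Phi(\xi)$, to a trajectory $(p,q)$ of the autonomous planar system~\eqref{eq-manifold} defined on $(-\infty,y_0]$ that satisfies~\eqref{equilibrium} and, by Lemma~\ref{lemma1} and the remark following it, stays in the open region $\{p>0,\,q>0\}$. Since~\eqref{eq-manifold} is autonomous, uniqueness ``up to shift'' of the original wave is exactly uniqueness, modulo $y$-translation, of such a trajectory.

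First I would linearize~\eqref{eq-manifold} at the equilibrium $(0,0)$. Using $f(0,\cdot)\equiv 0$, $g(0)=0$, $\dot g(0)>0$ and $h(1)>0$, all first partials of $F_1(p,q):=\tfrac{1}{c}f(p,q-p+1)g(p)h(q-p+1)$ vanish at the origin, so the Jacobian of the right-hand side is
\[
J(0,0)=\begin{pmatrix} 0 & 0 \\ 0 & -c \end{pmatrix},
\]
with eigenvalues $0$ and $-c$, the corresponding eigenspaces being the $p$-axis and the $q$-axis. Hence $(0,0)$ is semi-hyperbolic: any trajectory approaching it as $y\to-\infty$ must lie on the one-dimensional unstable set, which is the ``unstable branch'' of the one-dimensional center manifold $W^c$. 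The center manifold theorem yields a local $C^k$ graph $W^c=\{(p,\phi(p)):0\le p<\delta\}$ with $\phi(0)=\phi'(0)=0$, whose Taylor coefficients are uniquely fixed by the invariance equation
\[
\phi'(p)F_1(p,\phi(p))=-c\,\phi(p)+F_1(p,\phi(p)),
\]
giving in particular $\phi(p)\sim \tfrac{f_p(0,1)\dot g(0) h(1)}{c^{2}}\,p^{2}$ as $p\to 0^{+}$.

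The decisive step, and the one I expect to be the real obstacle, is to show that among graphs $q=\psi(p)$ corresponding to trajectories entering $(0,0)$ backwards in the positively invariant quadrant, there is only one. Following the methodology of~\cite{AiHuang-07}, parametrize the trajectory by $p$ on a small interval $[0,\delta]$; then $\psi$ solves the singular scalar initial-value problem
\[
\frac{d\psi}{dp}=1-\frac{c\,\psi(p)}{F_{1}(p,\psi(p))},\qquad \psi(0^{+})=0,
\]
where $F_1(p,\psi(p))\sim A p^{2}$ with $A>0$. Given two such solutions $\psi_{1},\psi_{2}$ with the same leading-order expansion, form $\Delta(p)=\psi_{1}(p)-\psi_{2}(p)$; subtracting the two equations, expanding $F_1$ to match the quadratic principal part, and using the strict positivity of $F_1$ on $(0,\delta]$ give a linear singular inequality for $\Delta$ whose singular coefficient becomes integrable once the leading $p^{2}$-contribution has been cancelled, so a Gronwall-type argument forces $\Delta\equiv 0$ on $[0,\delta]$.

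Once uniqueness of $\psi$ is established, the reduced one-dimensional dynamics $\dot p=F_{1}(p,\psi(p))>0$ determines $p(y)$, and hence $q(y)=\psi(p(y))$, uniquely up to a shift in $y$. Standard continuation along the trajectory in $\{p>0,\,q>0\}$ extends this to the full interval $(-\infty,0]$; pulling back through the diffeomorphism $\Phi$ yields uniqueness of $(\eta,\beta)$ up to a shift in $\xi$. The existence and smoothness of $W^c$ are classical; the real content and the only delicate point is the uniqueness of the center manifold in the relevant region, which requires the careful asymptotic analysis at $p=0$ sketched above.
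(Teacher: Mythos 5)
Your strategy coincides with the paper's: desingularize via $\Phi$, observe that $(0,0)$ is a semi-hyperbolic equilibrium of~\eqref{eq-manifold} with eigenvalues $0$ and $-c$, note that the connecting trajectory must lie on a one-dimensional (a priori non-unique) center manifold, and reduce the theorem to uniqueness of that center manifold on the side $p>0$. Your linearization and the asymptotics $\phi(p)\sim c^{-2}f'_\eta(0,1)\dot g(0)h(1)\,p^{2}$ are correct and consistent with~\eqref{ddB0} (the paper's displayed Jacobian corresponds to the coordinates $(p,\beta-1)$ rather than $(p,q)$; the qualitative picture is the same). The gap is precisely in the step you flag as the real obstacle.

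The singular coefficient does \emph{not} become integrable. Subtracting the two equations and applying the mean value theorem to $F_1$ gives
\[
\Delta'(p)=-\lambda(p)\,\Delta(p),\qquad
\lambda(p)=\frac{c}{F_1(p,\psi_1(p))}-\frac{c\,\psi_2(p)\,\partial_\psi F_1\left(p,\bar\psi(p)\right)}{F_1(p,\psi_1(p))\,F_1(p,\psi_2(p))}
=\frac{c^{2}}{A\,p^{2}}\bigl(1+o(1)\bigr),
\]
with $A=f'_\eta(0,1)\dot g(0)h(1)>0$, and $p^{-2}$ stays non-integrable at $0$ no matter how many terms of the common expansion of $\psi_1,\psi_2$ you cancel. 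Worse, a Gronwall-type argument is structurally the wrong tool: Gronwall only uses $|\Delta'|\leq\lambda(p)|\Delta|$, and that inequality with non-integrable $\lambda$ and $\Delta(0^{+})=0$ does not force $\Delta\equiv0$ --- the function $\Delta(p)=e^{-c^{2}/(Ap)}$ satisfies it, vanishes to infinite order at $0$, and is exactly the flat discrepancy by which distinct center manifolds of a semi-hyperbolic point differ in general (indeed, on the side $p<0$ the center manifold here is \emph{not} unique, so no sign-blind argument can succeed). What proves uniqueness is the \emph{sign} of $\lambda$ for $p>0$: the integrating factor yields $\Delta(p)=\Delta(p_0)\exp\left(\int_p^{p_0}\lambda(s)\,\mathrm{d}s\right)$ with $\int_p^{p_0}\lambda\sim c^{2}/(Ap)\to+\infty$ as $p\to0^{+}$, so boundedness of $\Delta$ forces $\Delta(p_0)=0$. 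This sign-dependent mechanism is exactly what the paper implements in the $y$-parametrization: it writes $\delta(y)e^{cy}=\int_{-\infty}^{y}\partial_k v\cdot\delta\cdot e^{cs}/c\,\mathrm{d}s$ and uses that $\partial_k v\to0$ at the equilibrium (because $g(0)=0$) together with the decaying kernel $e^{c(s-y)}$ to get a sup-norm contraction. Replace your Gronwall step by this integrating-factor (or contraction) argument and the proof closes.
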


\begin{proof}
Let $c>0$ be fixed. We deduce the uniqueness of a semi-wavefront $(\eta,\beta)$ of~\eqref{eq-sys} in $(-\infty,0]$, by using the reduction to the desingularized first-order problem~\eqref{eq-manifold} and proving that the latter system has at most one solution satisfying the boundary conditions~\eqref{equilibrium}. We thus compute the Jacobian matrix $J$ of~\eqref{eq-manifold} at the equilibrium point $(0,0)$ which is
\[
J(0,0)=\begin{bmatrix}
0		&	0\\
-c		&	-c\\
\end{bmatrix}.
\]
The eigenvalues of $J(0,0)$ are thus $\lambda_1=0$ and $\lambda_2=-c$ with associated eigenvectors $e_1=(1,-1)$ and $e_2=(0,1)$, respectively. Then by the Center Manifold Theorem (cf.,~\cite[Theorem~3.2.1]{GuHo-83}), there is a unique stable invariant manifold tangent to the eigenspace generated by $e_2$ and a not necessarily unique center invariant manifold tangent to the eigenspace generated by $e_1$.  Notice that these manifolds are one-dimensional. Since $(p,q)$ verifies~\eqref{equilibrium}, it leaves the origin through the center manifold. To prove that the center manifold is unique, we follow the approach in~\cite{AiHuang-07}. Let us assume that there are two center manifolds $W_1$ and $W_2$, then from~\cite[Theorem~10.14]{HaKo-91}, there exist two functions $k_1$ and $k_2$ of class $C^1$ such that $W_1=\left(p,k_1(p)\right)$ and $W_2=\left(p,k_2(p)\right)$ such that $k_i(0)=k'_i(0)=0$ for $i=\{1,2\}$. Since the center manifolds are invariant for the flow associated with~\eqref{eq-manifold} we obtain
\[
\dot k_i\left(p(y)\right)\frac{v\left(p(y),k_i\left(p(y)\right)\right)}{c}=-c k_i\left(p(y)\right)+\frac{v\left(p(y),k_i\left(p(y)\right)\right)}{c}, \quad i=\{1,2\},
\]
where
\[
v\left(p,k\right):=f(p,k-p+1)g(p)h(k-p+1).
\]
Let us consider
\[
\delta(y)=k_1(p(y))-k_2(p(y)),
\]
which is a bounded function in $(-\infty,0]$ because it is continuous and $\lim_{y\to-\infty}p(y)=0.$ Then, we have
\[
\dot\delta(y)+c\delta(y)=\frac{v\left(p(y),k_1(p(y))\right)-v\left(p(y),k_2(p(y))\right)}{c}.
\]
By integrating in $(-\infty,y]$ and since $\lim_{y\to-\infty}k_i\left(p(y)\right)=0$, for $i\in\{1,2\}$, we obtain
\begin{equation}\label{eq-s1}
\delta(y) e^{cy}= \int_{-\infty}^y 
\Big(v\left(p(s),k_1(p(s))\right)-v\left(p(s),k_2(p(s))\right)\Big)
\frac{e^{cs}}{c}\,\mathrm{d}s.
\end{equation}
By applying the Mean Value Theorem, we have
\begin{equation}\label{eq-s2}
v\left(p(s),k_1(p(s))\right)-v\left(p(s),k_2(p(s))\right)=
\frac{\partial v}{\partial k}\left(p(s),\bar{k}(s)	\right) \delta(s),
\end{equation}
with $\bar{k}(s)$ in between $k_1(p(s))$ and $k_2(p(s))$. We observe that 
\[
\lim_{s\to-\infty}\frac{\partial v}{\partial k}\left(p(s),\bar{k}(s)	\right)=0,
\]
because $g(p(-\infty))=0$. Thus, there is $\bar{y}\in(-\infty,0)$ such that 
\begin{equation}\label{eq-s3}
\left|\frac{\partial v}{\partial k}\left(p(s),\bar{k}(s)\right)\right|<\frac{1}{2},\quad \forall s<\bar{y}. 
\end{equation}
From~\eqref{eq-s1}, \eqref{eq-s2} and~\eqref{eq-s3}, for all $y<\bar{y}$, we have 
\[
|\delta(y)|\leq \frac{1}{2} \int_{-\infty}^y |\delta(s)|\frac{e^{c(s-y)}}{c}\,\mathrm{d}s
\leq \frac{1}{2}\sup_{y\in(-\infty,\bar{y}]} |\delta(y)|.
\]
Since $\sup_{y\in(-\infty,\bar{y}]} |\delta(y)|$ is finite, then $|\delta(y)|=0$ for all $y<\bar{y}$. This yields $k_1(p(y))=k_2(p(y))$ for all $y<\bar{y}$. By considering the initial value problems associated with
\[
\dot p=\dfrac{f(p,k_1(p)-p+1) g(p) h(k_1(p)-p+1)}{c},
\]
whose solutions are unique, we deduce that $k_1(p)=k_2(p)$ as long as they exist, namely $W_1=W_2$. This leads to the uniqueness of the solutions of \eqref{eq-manifold}--\eqref{equilibrium} of up to translation.
\end{proof}

\begin{proposition} \label{prop-conv}
Let $\{c_n\}_n$ be a sequence converging to $c_0>0$. Let $(\eta_{c_n},\beta_{c_n})$ be the semi-wavefront of \eqref{eq-sys}--\eqref{eq-bc1} in $(-\infty,0]$ associated with $c_n$ for $n\geq 0$. Then, $(\eta_{c_n},\beta_{c_n})$ converges to $(\eta_{c_0},\beta_{c_0})$ in $C(-\infty,0]$.
\end{proposition}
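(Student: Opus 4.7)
The plan is to combine a priori bounds with a compactness argument on compact subintervals and the uniqueness result of Theorem~\ref{th-uniq}. Since $c_n\to c_0>0$, I discard finitely many terms and assume $c_n\in[c_0/2,2c_0]$ throughout. Proposition~\ref{proplus} then supplies, with constants independent of $n$,
\[
\eta_0 e^{2L_2\xi/c_0}\le\eta_{c_n}(\xi)\le \eta_0 e^{L_1(1-\eta_0)\xi/(2c_0)},\qquad \xi\le 0,
\]
and, combined with Lemma~\ref{lemma1}\,$(iv)$ together with $\beta_{c_n}\le 1$,
\[
1-\eta_0 e^{L_1(1-\eta_0)\xi/(2c_0)}\le\beta_{c_n}(\xi)\le 1.
\]
These $n$-independent bounds yield uniform convergence to the boundary data at $-\infty$ and will later be used to transfer convergence from compact sets to the whole half-line if needed.

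Next I would show equicontinuity on each compact interval $[-T,0]$. Monotonicity and Lemma~\ref{lemma1}\,$(iv)$ give $\beta_{c_n}(\xi)\ge\beta_{c_n}(0)\ge 1-\eta_0$, while the previous display yields $\eta_{c_n}(\xi)\ge\eta_0 e^{-2L_2T/c_0}$ on $[-T,0]$. Hence $g(\eta_{c_n})$ and $h(\beta_{c_n})$ stay bounded away from $0$ uniformly in $n$. Using~\eqref{eq-ode-1}, the identity~\eqref{beta'}, and $|1-\beta-\eta|\le 1$, both $\eta_{c_n}'$ and $\beta_{c_n}'$ are then uniformly bounded on $[-T,0]$. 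A diagonal Ascoli--Arzel\`a argument extracts from any subsequence a further subsequence $(\eta_{c_{n_k}},\beta_{c_{n_k}})\to(\bar\eta,\bar\beta)$ uniformly on compact subsets of $(-\infty,0]$, with $\bar\eta,\bar\beta$ continuous and component-wise monotone.

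To identify the limit, I pass to the limit in the integrated forms of~\eqref{eq-ode-1} and~\eqref{beta'}, whose integrands are uniformly controlled on each compact by the previous step; this shows that $(\bar\eta,\bar\beta)$ solves~\eqref{eq-sys} with speed $c_0$ and satisfies $\bar\eta(0)=\eta_0$. The uniform tail estimates of the first paragraph pass to the limit, yielding $\bar\eta(-\infty)=0$ and $\bar\beta(-\infty)=1$. Thus $(\bar\eta,\bar\beta)$ is a semi-wavefront of~\eqref{eq-sys}--\eqref{eq-bc1} on $(-\infty,0]$ with speed $c_0$ carrying the same normalization as $(\eta_{c_0},\beta_{c_0})$, so Theorem~\ref{th-uniq} forces $(\bar\eta,\bar\beta)=(\eta_{c_0},\beta_{c_0})$. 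Since every subsequence admits a sub-subsequence with this common limit, the full sequence converges to $(\eta_{c_0},\beta_{c_0})$ uniformly on compact subsets of $(-\infty,0]$, and the uniform tail bounds upgrade this to convergence on all of $(-\infty,0]$ if that is the intended meaning.

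The delicate point will be that Theorem~\ref{th-uniq} only gives uniqueness up to translation, so the shift has to be pinned down. This is handled by the construction in Theorem~\ref{th-m}: all $(\eta_{c_n},\beta_{c_n})$ share the prescribed value $\eta_{c_n}(0)=\eta_0$, a property that passes to $\bar\eta$ under uniform convergence, removing the shift ambiguity when Theorem~\ref{th-uniq} is applied.
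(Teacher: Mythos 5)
Your proposal is correct and follows essentially the same route as the paper: uniform-in-$n$ bounds from Proposition~\ref{proplus}, equiboundedness and equicontinuity of $(\eta_{c_n},\beta_{c_n})$ and their derivatives on compacta via \eqref{eq-ode-1} and \eqref{beta'}, Ascoli--Arzel\`a, passage to the limit in the integrated equations, and identification of the limit through Theorem~\ref{th-uniq} with the normalization $\eta_{c_n}(0)=\eta_0$ pinning down the shift. The only (harmless) deviations are that you cite Lemma~\ref{lemma1}\,$(iv)$ --- stated for full wavefronts --- where Remark~\ref{rem-i} together with the sign of $\beta'$ supplies the needed $\beta\geq 1-\eta$ for semi-wavefronts, and that your uniform tail bound yields $\bar\beta(-\infty)=1$ somewhat more directly than the paper's monotonicity-plus-ODE argument, while your subsequence/sub-subsequence step makes the convergence of the full sequence explicit.
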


\begin{proof}
Since $\{c_n\}_n$ converges to $c_0>0$ there exist $m_l\leq c_n\leq m_u$, for every $n$. Without loss of generality, let us set $\eta_{c_n}(0)=1/2$,  for all $n$. Then, by Remark~\ref{rem-i}, we notice that $\beta_{c_n}(0)\geq1/2$,  for all $n$. 
Next, we divide the proof into three steps. 

Firstly, we prove that $\{\eta_{c_n}\}_n$ converge uniformly in compact sets of $(-\infty,0]$. From~\eqref{etalus}, we obtain
\begin{equation}\label{eq-nn}
\frac{1}{2} e^{\frac{L_2\xi}{m_u}}\leq \eta_{c_n}(\xi)\leq \frac{1}{2} e^{\frac{L_1(1-\eta_0) \xi}{m_l}},	\quad\forall\xi\leq 0.
\end{equation}
Moreover, from~$(A_1)$, \eqref{eq-ode-1}, and~\eqref{etalus}, we can deduce that
\[
\frac{L_1}{4 m_u} e^{\frac{L_2\xi}{m_u}}\leq \eta'_{c_n}(\xi)\leq \frac{L_2}{2m_l},	\quad\forall\xi\leq 0.
\]
Thus, $\{\eta_{c_n}\}_n$ and $\{\eta'_{c_n}\}_n$ are equibounded, and so $\{\eta_{c_n}\}_n$ is equicontinuous. By the Ascoli-Arzel\`a Theorem we conclude that, up to a subsequence $\eta_{c_n}\to\bar\eta$ as $n\to+\infty$ uniformly in compact sets of $(-\infty,0]$. 

Secondly, we prove that $\{\beta_{c_n}\}_n$ converge uniformly in compact sets of $(-\infty,0]$. From~$(A_2)$, $(A_3)$, and~\eqref{beta'}, we have
\[
-\frac{m_u e^{\frac{L_1\xi}{2m_l}}}{2 A}\leq \beta'_{c_n}(\xi)\leq 0, 		\quad\forall\xi\leq 0,
\]
where 
\[
A:=M_g g\left(\tfrac{1}{2}e^{\frac{L_2\xi}{m_u}}\right)\min_{r\in[\frac{1}{2},1]}h(r).
\]
Since, $1/2 \leq \beta_{c_n}(\xi)\leq1$, for all $\xi\leq 0$, it follows that $\{\beta_{c_n}\}_n$ and $\{\beta'_{c_n}\}_n$ are equibounded, and so $\{\beta_{c_n}\}_n$ is equicontinuous. By the Ascoli-Arzel\`a Theorem we conclude that, up to a subsequence $\beta_{c_n}\to\bar\beta$ as $n\to+\infty$ uniformly in compact sets of $(-\infty,0]$. 

At last, by showing that $(\bar\eta,\bar\beta)$ solves~\eqref{eq-sys}--\eqref{eq-bc1} for $c=c_0$, we will conclude that $(\bar\eta,\bar\beta)=(\eta_{c_0},\beta_{c_0})$. By integrating~\eqref{eq-ode-1} with $c=c_n$ in $(\xi,0)$, we have
\[
\eta_{c_n}(\xi)=\eta_{c_n}(0)-\int_\xi^0 \frac{f(\eta_{c_n}(s),\beta_{c_n}(s))}{c_n}\,\mathrm{d}s \to \bar\eta(0)-\int_\xi^0 \frac{f(\bar\eta(s),\bar\beta(s))}{c_0}\,\mathrm{d}s,
\]
and we obtain  
\[\bar\eta'(\xi)=\frac{f(\bar\eta(\xi),\bar\beta(\xi))}{c_0}, \quad\forall\xi\leq 0.\] 
By integrating~\eqref{beta'} with $c=c_n$ in $(\xi,0)$ and then passing to the limit as $n\to+\infty$ like before, we obtain 
\begin{equation}\label{eq-pom}
\bar\beta'(\xi)=\frac {c_0(1 - \bar\beta(\xi) - \bar\eta(\xi))} {g(\bar\eta(\xi)) h(\bar\beta(\xi))},  \quad\forall\xi\leq 0.
\end{equation}
From~\eqref{eq-nn}, one can prove that $\bar\eta(-\infty)=0$.  Since $\beta'_{c_n}(\xi)<0$, for every $\xi$ and for every $n$, then by passing to the limit as $n\to+\infty$, we have $\bar{\beta}'(\xi)\leq 0$, for every $\xi$. Thus, $\bar\beta$ has limit as $\xi\to-\infty$ and so we conclude, by using~\eqref{eq-pom}, $\bar{\beta}(-\infty)\geq 1$. On the other hand, $\beta_{c_n}(\xi)<1$, for every $\xi$ and for every $n$, and so by passing to the limit as $n\to+\infty$, we deduce $\bar\beta(\xi)<1.$ At last, by passing to the limit 
as $\xi\to-\infty$, we have $\bar{\beta}(-\infty)\leq 1$. We thus conclude that $\bar{\beta}(-\infty)= 1.$

Hence, we have proven that $(\bar\eta,\bar\beta)$ is a solution of~\eqref{eq-sys}--\eqref{eq-bc1} for $c=c_0$ and so, by the uniqueness of the semi-wavefronts in Theorem~\ref{th-uniq}, it follows that $(\bar\eta,\bar\beta)=(\eta_{c_0},\beta_{c_0})$, concluding the proof.
\end{proof}

\subsection{Extension to a classical wavefront}\label{sub-ex-p}

Consider the semi-wavefront $(\eta,\beta)$ of system~\eqref{eq-sys}--\eqref{eq-bc1} obtained on the negative half-line using Theorem~\ref{th-m}. It is possible to extend $(\eta,\beta)$ over the entire interval $[0, \tau)$, where $\tau \in \mathbb{R} \cup \{+\infty\}$ is defined as in~\eqref{eq-tau}. We henceforth maintain the same notation $(\eta,\beta)$ to denote the extended semi-wavefront. We will first outline several properties of the semi-wavefront. Subsequently, we will introduce a singular first-order reduction and exploit it to prove the existence of a classical wavefront satisfying~\eqref{eq-bc2}.

\begin{proposition}\label{prop-3}
Let $(\eta,\beta)$ be the extended semi-wavefront on the interval $[0, \tau)$, then it satisfies the following properties:
\begin{enumerate}[nosep,wide=0pt,  labelwidth=20pt, align=left]
\item[$(i)$]  $\eta' (\xi) > 0 $ and $\beta'(\xi) < 0 $ for every $ \xi \in [0,\tau); $
\item[$(ii)$] if $ \tau = + \infty$, then $\eta(+\infty)=1$  and $\beta(+\infty)=0;$ 
\item[$(iii)$] if $ \tau \in \mathbb R$, then $ \eta(\tau) \geq 1.$
\end{enumerate}
\end{proposition}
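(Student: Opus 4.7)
The plan is to prove the three assertions in the order (i), (iii), (ii), since (iii) is an almost immediate consequence of (i), while (ii) requires the most delicate limiting arguments.

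For (i), the heart of the matter is to establish $\beta'(\xi) < 0$ throughout $[0,\tau)$; once this is available, $\eta(\xi) \geq \eta_0 > 0$ (from the forward monotonicity that it implies) and $\beta(\xi) > 0$ (by the definition of $\tau$) combine with $(A_1)$ and \eqref{eq-ode-1} to give $\eta'(\xi) = f(\eta(\xi),\beta(\xi))/c > 0$. To force $\beta'<0$, I would introduce the auxiliary function $\Gamma(\xi) := 1 - \eta(\xi) - \beta(\xi)$, so that by \eqref{beta'}, $\mathrm{sign}(\beta'(\xi)) = \mathrm{sign}(\Gamma(\xi))$ wherever $g(\eta)h(\beta) > 0$. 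Lemma~\ref{lemma1}(i) yields $\Gamma < 0$ on $(-\infty,0)$, so by continuity $\Gamma(0) \leq 0$. I argue by contradiction: if $\xi^* := \inf\{\xi \in [0,\tau) : \Gamma(\xi) \geq 0\}$ were finite, continuity would force $\Gamma(\xi^*) = 0$ and therefore $\beta'(\xi^*) = 0$, which gives
\[
\Gamma'(\xi^*) = -\eta'(\xi^*) - \beta'(\xi^*) = -\frac{f(\eta(\xi^*),\beta(\xi^*))}{c} < 0
\]
thanks to $(A_1)$ and the positivity of $\eta(\xi^*), \beta(\xi^*)$. Hence $\Gamma$ strictly decreases through $0$ at $\xi^*$, so $\Gamma > 0$ on a left neighborhood of $\xi^*$, contradicting either the definition of $\xi^*$ (if $\xi^* > 0$) or the sign of $\Gamma$ on $(-\infty,0)$ (if $\xi^* = 0$).

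Part (iii) follows immediately: with $\beta' < 0$ on $[0,\tau)$, identity \eqref{eq-aux1} gives $\eta + \beta > 1$ on $[0,\tau)$, so sending $\xi \to \tau^-$ and using $\beta(\tau) = 0$ produces $\eta(\tau) \geq 1$.

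For (ii), the strict monotonicity of $\eta$ and $\beta$ from (i) yields limits $L_\eta := \eta(+\infty)$ and $L_\beta := \beta(+\infty) \in [0,\beta_0)$, and the objective reduces to showing $L_\beta = 0$ and $L_\eta = 1$. To rule out $L_\beta > 0$, I would integrate \eqref{eq-ode-1}: $c(\eta(\xi) - \eta_0) \geq L_1 \eta_0 L_\beta\, \xi$ for $\xi > 0$, forcing unbounded growth of $\eta$, and then use identity \eqref{eq-aux1} with $h(\beta) \geq h(L_\beta) > 0$ and the lower bound $g(\eta) \geq M_g g(\eta_0)$ from $(A_2)$ to conclude $\beta'(\xi) \to -\infty$, so that $\beta$ vanishes in finite time, contradicting $\tau = +\infty$. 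Once $L_\beta = 0$ is in hand, the inequality $\eta + \beta \geq 1$ passes to the limit to give $L_\eta \geq 1$. The opposite inequality $L_\eta \leq 1$ comes from the same blow-up mechanism: if $L_\eta > 1$, then $\eta + \beta - 1 \to L_\eta - 1 > 0$ while, by $(A_3)$, $h(\beta) \to 0$, so \eqref{eq-aux1} yields $|\beta'(\xi)| \to +\infty$ and $\beta$ again reaches zero in finite time.

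The main obstacle is the second half of (ii): both pathological limits $L_\beta > 0$ and $L_\eta > 1$ are excluded by the same mechanism, namely that any failure of the trajectory to approach $(1,0)$ triggers a blow-up of $\beta'$ through identity \eqref{eq-aux1} combined with the degeneracy $h(0) = 0$; translating this qualitative blow-up into a quantitative extinction time for $\beta$ requires a careful use of the structural bounds on $g$ from $(A_2)$ and of the behavior $h(\beta) \sim \dot h(0)\,\beta$ near zero from $(A_3)$, comparing $\beta$ to a suitable linearly decreasing supersolution on a tail interval.
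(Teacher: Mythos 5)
Your parts (i) and (iii) coincide with the paper's: for (i) the paper argues ``as in Lemma~\ref{lemma1}'' with the function $\Lambda=g(\eta)h(\beta)\beta'$, which by~\eqref{eq-aux1} equals $c\Gamma$, so your $\Gamma$-argument is the same one in different clothing (just make sure you quote the sign of $\Gamma$ on $(-\infty,0]$ from Proposition~\ref{th-beta} rather than from Lemma~\ref{lemma1}, which is stated for waves satisfying both boundary conditions); part (iii) is verbatim the paper's limit in~\eqref{eq-aux1}. Where you genuinely diverge is (ii). The paper's route is softer: since $\eta$ and $\beta$ are monotone they have limits $m$ and $l$; if $l>0$ then $g(m)h(l)\neq0$, so $\beta'$ and $\eta'$ themselves converge, and the limits of derivatives of bounded monotone functions must vanish, giving $1-l-m=0$ and $f(m,l)=0$, hence a contradiction with $(A_1)$. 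Your route is a quantitative blow-up argument: any failure to approach $(1,0)$ drives the numerator of~\eqref{beta'} away from zero (or to $-\infty$) while the denominator stays controlled, forcing $\beta'\to-\infty$ and extinction of $\beta$ in finite time, contradicting $\tau=+\infty$. Both work; the paper's is shorter for excluding $l>0$, but your version has the merit of making explicit the step $L_\eta\leq1$, which the paper leaves implicit after concluding $l=0$ (only $L_\eta\geq1$ follows from $\eta+\beta\geq1$). Two small repairs are needed in your write-up. First, to conclude $\beta'\to-\infty$ from a numerator tending to $-\infty$ you need the denominator $g(\eta)h(\beta)$ bounded \emph{above}, not below: the bounds you invoke ($h(\beta)\geq h(L_\beta)$, which moreover presumes a monotonicity of $h$ not contained in $(A_3)$, and $g(\eta)\geq M_g g(\eta_0)$) go the wrong way; the correct upper bounds are immediate from continuity on the relevant compact ranges and the bounded continuous extensions of $g$ and $h$. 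Second, the closing machinery (linear supersolutions, the asymptotics $h(\beta)\sim\dot h(0)\beta$) is unnecessary: once $\beta'\leq-1$ on a tail interval, direct integration already forces $\beta$ to cross zero at a finite abscissa, which is all the contradiction requires.
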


\begin{proof}
The assertions in $ (i)$ are a consequence of $ (A_1)$ by reasoning like in the proof of Lemma~\ref{lemma1}. 

To prove $(ii)$ and $(iii)$, we notice first that $(\eta,\beta)$ is a traveling wave of~\eqref{eq-sys} satisfying~\eqref{eq-bc1}, therefore, according to Proposition~\ref{prop1}, $\beta$ is a solution of \eqref{beta'} in $ (-\infty,\tau). $ 

To prove $(ii)$, from $(i)$ it follows the existence of $\eta(+\infty)=m \in (0,+\infty] $ and $\beta(+\infty) = l \in [0,\beta_0). $ Suppose by contradiction that $ l> 0. $ From $(A_2)$ and $(A_3)$, we obtain $h(l)g(m) \neq 0,$ thus $\beta'$ has limit as $\xi\to+\infty$, and  
\[
\beta'(+\infty) = \frac {c(1-l-m)} {g(m) h(l)}. 
\]
As a consequence, $ 1 - l - m = 0. $ In particular $ m = 1-l \in \mathbb{R}. $ We notice that $\eta'$ has limit as $\xi\to+\infty$ and  
\[
\eta'(+\infty) = \frac {f(m,l)} c. 
\]
Again it must hold $f(m,l)=0.$ Hence, from $(A_1)$, we have either $m=0$ (which is not possible because $m>0$) or $l=0$ (which is a contradiction with $l>0$). Then, it follows that $\eta(+\infty)=1$ and $\beta(+\infty)=0$. 

To prove $(iii)$, from $(i)$ and passing to the limit in~\eqref{eq-aux1} as $ \xi \to \tau$, we get that $ 1- \beta(\tau)-\eta(\tau) \leq 0,$ that is $\eta(\tau) \geq 1.$ 
\end{proof}

\begin{corollary}\label{cor-cond}
Let $(\eta,\beta)$ be the extended semi-wavefront on the interval $[0, \tau)$. Then $(\hat\eta,\hat\beta)$ defined as
\begin{equation}\label{eq-hat}
\hat{\eta}(\xi):=\begin{cases}
\eta(\xi) &\xi<\tau,\\
\eta(\tau) &\xi\geq\tau,
\end{cases}
\quad \text{and}\quad
\hat{\beta}(\xi):=\begin{cases}
\beta(\xi) &\xi<\tau,\\
0 &\xi\geq\tau,
\end{cases}\end{equation} 
is a wavefront of~\eqref{eq-sys}--\eqref{eq-bc} if and only if 
\begin{equation}\label{eq-lim}
\lim_{\xi\to\tau^-} h(\hat\beta(\xi))\hat\beta'(\xi)=0,
\end{equation}
or equivalently,
\begin{equation}\label{eq-st}
\tau=\sigma:=\inf\{\xi\in\mathbb{R}\colon \eta(\xi)=1\}.
\end{equation}
\end{corollary}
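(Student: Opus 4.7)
I would prove the two equivalences as a chain:
$$(\hat\eta,\hat\beta)\text{ is a wavefront of \eqref{eq-sys}--\eqref{eq-bc}} \iff \eqref{eq-st} \iff \eqref{eq-lim},$$
treating the case $\tau=+\infty$ as essentially automatic (via Propositions~\ref{prop-3}(ii) and Lemma~\ref{lemma1bis}(iii), under which $(\hat\eta,\hat\beta)=(\eta,\beta)$ is already a classical wavefront and both \eqref{eq-st} and \eqref{eq-lim} hold trivially). Accordingly, I focus on $\tau\in\mathbb{R}$, where by Proposition~\ref{prop-3}(iii) we have $\eta(\tau)\geq 1$, so \eqref{eq-st} is equivalent to the equality $\eta(\tau)=1$.

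For \eqref{eq-st}$\iff$\eqref{eq-lim}, I would use that by Proposition~\ref{prop1}, $\beta$ solves \eqref{eq-aux1} classically on $(-\infty,\tau)$, giving
$$g(\eta(\xi))h(\beta(\xi))\beta'(\xi)=c\bigl(1-\beta(\xi)-\eta(\xi)\bigr),\qquad \xi<\tau.$$
Passing to the limit as $\xi\to\tau^{-}$ and using continuity of $\eta$ together with $g(\eta(\tau))\geq g(1)>0$ (by $(A_2)$ and $\eta(\tau)\geq 1$), one obtains
$$\lim_{\xi\to\tau^{-}} h(\beta(\xi))\beta'(\xi)=\frac{c(1-\eta(\tau))}{g(\eta(\tau))},$$
which vanishes exactly when $\eta(\tau)=1$, i.e., exactly when $\tau=\sigma$.

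For $(\hat\eta,\hat\beta)$ being a wavefront $\iff$ \eqref{eq-st}, the forward direction follows from Lemma~\ref{lemma1}(iv): a wavefront satisfies $\hat\eta+\hat\beta\geq 1$ everywhere, so at $\xi=\tau$ (where $\hat\beta(\tau)=0$) we get $\eta(\tau)\geq 1$, and combining with $\hat\eta\in[0,1]$ yields $\eta(\tau)=1$. For the converse, assuming $\eta(\tau)=1$, I would verify each condition of Definition~\ref{def-wave}. Item~(i): $\hat\eta$ is $C^1$ and solves \eqref{eq-ode-1}, since the left limit $\eta'(\tau^-)=f(\eta(\tau),0)/c=f(1,0)/c=0$ matches the right value $\hat\eta'(\tau^+)=0$, using $(A_1)$. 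Item~(ii): $\hat\beta$ is continuous (as $\beta(\tau^-)=0$), differentiable a.e., and $h(\hat\beta)\hat\beta'\in L^1_{\mathrm{loc}}(\mathbb{R})$ thanks to \eqref{eq-lim}, which guarantees boundedness of this product near $\tau$. The boundary conditions \eqref{eq-bc} hold because $\hat\eta(+\infty)=\eta(\tau)=1$ and $\hat\beta(+\infty)=0$.

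The only delicate point is the weak formulation \eqref{eq-weak}. For $\psi\in C_0^\infty(\mathbb{R})$, I split the integral at $\tau$. On $[\tau,+\infty)$ the integrand vanishes identically since $\hat\beta\equiv 0$, $\hat\beta'\equiv 0$, and $f(1,0)=0$. On $(-\infty,\tau)$, I substitute \eqref{eq-aux1} (which holds classically there) to replace the potentially singular quantity $g(\eta)h(\beta)\beta'$ by the bounded continuous expression $c(1-\beta-\eta)$; the integrand then collapses to $c(1-\eta)\psi'-c\eta'\psi$. Integrating by parts on $(-\infty,\tau)$ gives the boundary term $c(1-\eta(\tau))\psi(\tau)$, which vanishes for every test function exactly when $\eta(\tau)=1$. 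The main obstacle is precisely this integration by parts in the presence of possibly unbounded $\beta'$ near $\tau$, and it is handled cleanly by substituting \eqref{eq-aux1} before taking boundary values, thereby avoiding any direct estimate on $\beta'$ itself.
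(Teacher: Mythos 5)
Your proof is correct and reaches the paper's conclusion, but the way you verify the weak formulation is a genuine (and arguably cleaner) variant of the paper's argument. The paper keeps the flux term $g(\hat\eta)h(\hat\beta)\hat\beta'$ in the integrand, integrates by parts on $(\tau-\varepsilon,\tau-\delta)$ using the $C^2$ regularity of $\beta$ there, and kills the resulting boundary term $\bigl(g(\hat\eta)h(\hat\beta)\hat\beta'+c\hat\beta\bigr)(\tau-\delta)\,\psi(\tau-\delta)$ by invoking \eqref{eq-lim} as $\delta\to0$. You instead substitute \eqref{eq-aux1} first, so that on $(-\infty,\tau)$ the integrand becomes the exact derivative $\tfrac{\mathrm{d}}{\mathrm{d}\xi}\bigl[c(1-\eta)\psi\bigr]$ and the whole integral collapses to $c(1-\eta(\tau))\psi(\tau)$; this sidesteps any direct handling of the possibly unbounded $\beta'$ and yields the equivalence with $\eta(\tau)=1$, i.e.\ with \eqref{eq-st}, in both directions at once. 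The remaining pieces match the paper: the equivalence \eqref{eq-lim}$\iff$\eqref{eq-st} via \eqref{eq-aux1} and Proposition~\ref{prop-3} is identical, and for necessity you appeal to Lemma~\ref{lemma1}~$(iv)$ together with the range constraint $\hat\eta\le 1$, where the paper cites Lemma~\ref{lemma1bis}; both are legitimate. Two small points you should make explicit: first, that \eqref{eq-aux1} does hold classically on $(-\infty,\tau)$ for the extended semi-wavefront (the paper justifies this at the start of the proof of Proposition~\ref{prop-3} by applying Proposition~\ref{prop1} to a traveling wave satisfying \eqref{eq-bc1}); second, that $h(\hat\beta)\hat\beta'\in L^1_{loc}$ near $\tau$ already follows from \eqref{eq-aux1} together with $g(\eta(\xi))\ge M_g\, g(\eta_0)>0$ for $\xi\ge 0$, so it does not really need \eqref{eq-lim}, contrary to the way you phrase it.
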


\begin{proof}
Firstly, we prove the if condition~\eqref{eq-lim} holds, then $(\hat\eta,\hat\beta)$ verifies Definition~\ref{def-wave} and~\eqref{eq-bc2}. If $\tau=+\infty$, according with Proposition~\ref{prop1}, it is sufficient to show that~\eqref{eq-bc2} holds. By Proposition~\ref{prop-3}, we have $\hat\beta(+\infty)=0$ and $\hat\eta(+\infty)=1$. On the other hand, if $\tau<+\infty$, by passing to the limit in~\eqref{eq-aux1} as $ \xi \to \tau$ and using~\eqref{eq-lim}, we get $0= 1- \hat{\beta}(\tau)-\hat{\eta}(\tau) = 1- \hat{\eta}(\tau),$ and so $\hat\eta(\tau)=1$. It remains to prove that $\hat\beta$ verifies~\eqref{eq-weak}. Since $\hat\beta$ is of class $C^2$ in $(-\infty,\tau)$ by Proposition~\ref{prop1}, without loss of generality, let $\varepsilon>0$ and $\psi\in C^{\infty}_0(\tau-\varepsilon,\tau+\varepsilon)$, then
\[\begin{split}
&\int^{\tau+\varepsilon}_{\tau-\varepsilon} \Big(\left(g(\hat\eta(\xi))h(\hat\beta(\xi))\hat\beta'(\xi)+c\hat\beta(\xi)\right)\psi'(\xi) -f(\hat\eta(\xi),\hat\beta(\xi))\psi(\xi) \Big)\,\mathrm{d}\xi	\\
&=\lim_{\delta\to0} \int^{\tau-\delta}_{\tau-\varepsilon} \Big(\left(g(\hat\eta(\xi))h(\hat\beta(\xi))\hat\beta'(\xi)+c\hat\beta(\xi)\right)\psi'(\xi) -f(\hat\eta(\xi),\hat\beta(\xi))\psi(\xi) \Big)\,\mathrm{d}\xi	\\
&=\lim_{\delta\to0} \left[\left(g(\hat\eta(\xi))h(\hat\beta(\xi))\hat\beta'(\xi)+c\hat\beta(\xi)\right)\psi(\xi)\right]^{\tau-\delta}_{\tau-\varepsilon}  \\
	&\quad-\lim_{\delta\to0}\int^{\tau-\delta}_{\tau-\varepsilon} \Big(\left(g(\hat\eta(\xi))h(\hat\beta(\xi))\hat\beta'(\xi)\right)'+c\hat\beta(\xi) -f(\hat\eta(\xi),\hat\beta(\xi))\Big) \psi(\xi)\,\mathrm{d}\xi\\
&= \lim_{\delta\to0} \Big(\left(g(\hat\eta)h(\hat\beta)\hat\beta'\right)(\tau-\delta) +c\hat\beta(\tau-\delta)\Big)\psi(\tau-\delta)\\
&= g(1) \psi(\tau) \lim_{\delta\to0} \left(h(\hat\beta)\hat\beta'\right)(\tau-\delta)=0.
\end{split}\]
This concludes that $(\hat\eta,\hat\beta)$ is a wavefront of~\eqref{eq-sys}--\eqref{eq-bc}. 

On the other hand, we observe that if $(\hat\eta,\hat\beta)$ is a wavefront of~\eqref{eq-sys}--\eqref{eq-bc}, then condition~\eqref{eq-lim} follows from Lemma~\ref{lemma1bis}. 

At last, the equivalence between conditions~\eqref{eq-lim} and~\eqref{eq-st} holds by using~\eqref{eq-aux1} and Proposition~\ref{prop-3}. Indeed, \eqref{eq-lim} is verified if and only if $\lim_{\xi\to\tau^-}c(1-\beta(\xi)-\eta(\xi))=0$, that is $\lim_{\xi\to\tau^-}\eta(\xi)=1$, namely $\tau=\sigma$.
\end{proof}

\begin{remark} \label{rem-figures}
Proposition~\ref{prop-3} states that if $\tau = +\infty$, then $(\hat{\eta},\hat{\beta})=({\eta},{\beta})$ is indeed a classical wavefront of system~\eqref{eq-sys}--\eqref{eq-bc} on $\mathbb{R}$. However, when $\tau \in \mathbb{R}$, the status of $(\hat{\eta},\hat{\beta})$ as a wavefront is uncertain. In such cases, it is possible that $(\hat{\eta},\hat{\beta})$ may or may not be a wavefront. In the latter scenario, we have $\tau \neq \sigma$, where ${\eta}(\tau) > 1$, $\eta(\sigma) = 1$, and $\beta(\sigma) > 0$. It is important to note that since the finiteness of $\tau$ is not known a priori, condition~\eqref{eq-lim} or~\eqref{eq-st} is employed to ensure that the extended semi-wavefront is a wavefront. 
In Figure~\ref{fig-extension}, we depict the various possible configurations. We specifically avoid the case where $\beta'(\tau)=0$ for $\tau\in\mathbb{R}$ since, in Section~\ref{section-4}, we will demonstrate that such a case cannot hold. Accordingly, the wavefront is sharp if $\tau\in\mathbb{R}$.
\end{remark} 

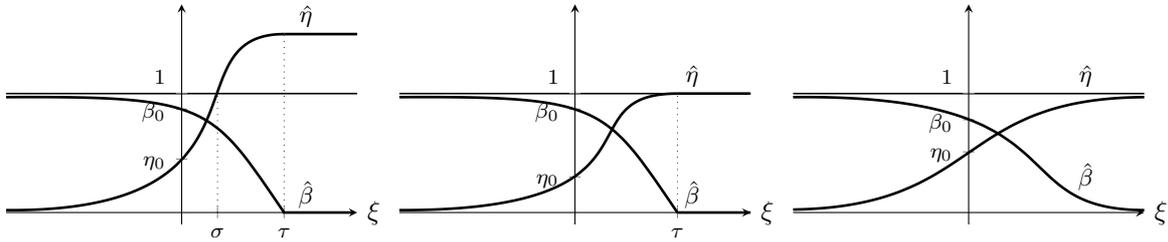
\begin{figure}[h!]
\begin{tikzpicture}
\begin{axis}[legend style={at={(axis cs:72,30)},anchor=north west},
  tick label style={font=\scriptsize},
  axis y line=middle, 
  axis x line=middle,
  ytick={0,1},
  yticklabel style={anchor=south east},
  xtick={0,2.45,7},
  xticklabels={$0$,$\sigma$,$\tau$}, 
  extra y tick style={anchor=north east,yticklabel style={anchor=east,yshift=-0.5mm}}, 
  extra y ticks={0.45,0.87,1},
  extra y tick labels={$\eta_0$,$\beta_0$},
  xlabel={\small $\xi$}, ylabel={},
every axis x label/.style={
    at={(ticklabel* cs:1.0)},
    anchor=west,
},
every axis y label/.style={
    at={(ticklabel* cs:5.0)},
    anchor=south west
},
  set layers,
  width=6.2cm,
  height=4.5cm,
  xmin=-12,
  xmax=12,
  ymin=-0.1,
  ymax=1.75]
\addplot [draw=black, line width=0.6pt, smooth, on layer=axis background]coordinates {(-12,1)(12,1)}; 
\addplot [draw=black, dotted, line width=0.3pt, smooth, on layer=axis background]coordinates {(7,0.000)(7,1.5)}; 
\addplot [draw=black, dotted, line width=0.3pt, smooth, on layer=axis background]coordinates {(2.45,0.000)(2.45,1)}; 
\addplot [draw=black, line width=1pt, smooth]coordinates {(6.9,0.000) (11.8,0)};
\addplot [draw=black, line width=1pt, smooth]coordinates {(-12.0000,0.9700) (-11.3540,0.9704) (-10.7280,0.9707) (-10.1220,0.9707) (-9.5344,0.9706) (-8.9660,0.9703) (-8.4160,0.9698) (-7.8838,0.9691) (-7.3691,0.9681) (-6.8715,0.9668) (-6.3904,0.9653) (-5.9255,0.9635) (-5.4763,0.9613) (-5.0424,0.9588) (-4.6234,0.9560) (-4.2188,0.9528) (-3.8281,0.9492) (-3.4510,0.9452) (-3.0871,0.9408) (-2.7358,0.9360) (-2.3967,0.9307) (-2.0695,0.9250) (-1.7536,0.9188) (-1.4487,0.9120) (-1.1543,0.9048) (-0.8700,0.8970) (-0.5952,0.8887) (-0.3297,0.8798) (-0.0730,0.8703) (0.1754,0.8602) (0.4159,0.8495) (0.6490,0.8382) (0.8750,0.8263) (1.0944,0.8136) (1.3077,0.8003) (1.5153,0.7863) (1.7175,0.7715) (1.9149,0.7560) (2.1079,0.7398) (2.4824,0.7051) (2.6648,0.6865) (2.8445,0.6671) (3.1975,0.6258) (3.3718,0.6039) (3.5451,0.5810) (3.8906,0.5327) (4.0637,0.5071) (4.2376,0.4806) (4.5896,0.4246) (5.3223,0.3005) (5.5140,0.2668) (5.7099,0.2321) (6.1165,0.1593) (6.3279,0.1212) (6.5453,0.0820) (6.7692,0.0416) (7.0000,0.0000)};
\addplot [draw=black, line width=1pt, smooth]coordinates {(12,1.5) (7,1.5)};
\addplot [draw=black, line width=1pt, smooth]coordinates {(7.0000,1.5000) (6.6366,1.4989) (6.2956,1.4957) (5.9761,1.4904) (5.6771,1.4832) (5.3976,1.4740) (5.1367,1.4630) (4.8933,1.4502) (4.6664,1.4356) (4.4551,1.4195) (4.2585,1.4017) (4.0754,1.3825) (3.9049,1.3618) (3.7461,1.3397) (3.5979,1.3164) (3.4594,1.2918) (3.3295,1.2661) (3.2074,1.2393) (3.0919,1.2114) (2.8773,1.1529) (2.7760,1.1224) (2.6776,1.0912) (2.4851,1.0267) (2.3890,0.9936) (2.2918,0.9601) (2.0899,0.8919) (1.9833,0.8574) (1.8715,0.8226) (1.6288,0.7529) (1.4958,0.7180) (1.3537,0.6832) (1.2017,0.6485) (1.0386,0.6141) (0.8635,0.5799) (0.6754,0.5462) (0.2563,0.4799) (0.0234,0.4476) (-0.2265,0.4160) (-0.4943,0.3850) (-0.7810,0.3548) (-1.0875,0.3255) (-1.4150,0.2971) (-1.7643,0.2696) (-2.1364,0.2432) (-2.5324,0.2180) (-2.9531,0.1939) (-3.3997,0.1710) (-3.8730,0.1495) (-4.3741,0.1294) (-4.9039,0.1108) (-5.4635,0.0937) (-6.0538,0.0781) (-6.6758,0.0643) (-7.3304,0.0522) (-8.0188,0.0419) (-8.7418,0.0335) (-9.5004,0.0270) (-10.2960,0.0226) (-11.1290,0.0202) (-12.0000,0.0200)};
\node at (axis cs:8.5,0.15) {\footnotesize{$\hat\beta$}};
\node at (axis cs:8.5,1.65) {\footnotesize{$\hat\eta$}};
\end{axis}
\end{tikzpicture}
\begin{tikzpicture}
\begin{axis}[legend style={at={(axis cs:72,30)},anchor=north west},
  tick label style={font=\scriptsize},
  axis y line=middle, 
  axis x line=middle,
  ytick={0,1},
  yticklabel style={anchor=south east},
  xtick={0,7},
  xticklabels={$0$,$\tau$}, 
  extra y tick style={anchor=north east,yticklabel style={anchor=east,yshift=-0.5mm}}, 
  extra y ticks={0.3,0.87,1},
  extra y tick labels={$\eta_0$,$\beta_0$},
  xlabel={\small $\xi$}, ylabel={},
every axis x label/.style={
    at={(ticklabel* cs:1.0)},
    anchor=west,
},
every axis y label/.style={
    at={(ticklabel* cs:5.0)},
    anchor=south west
},
  set layers,
  width=6.2cm,
  height=4.5cm,
  xmin=-12,
  xmax=12,
  ymin=-0.1,
  ymax=1.75]
\addplot [draw=black, line width=0.6pt, smooth, on layer=axis background]coordinates {(-12,1)(12,1)}; 
\addplot [draw=black, dotted, line width=0.3pt, smooth, on layer=axis background]coordinates {(7,0)(7,1)}; 
\addplot [draw=black, line width=1pt, smooth]coordinates {(12,0) (7,0)};
\addplot [draw=black, line width=1pt, smooth]coordinates {(-12.0000,0.9700) (-11.3540,0.9704) (-10.7280,0.9707) (-10.1220,0.9707) (-9.5344,0.9706) (-8.9660,0.9703) (-8.4160,0.9698) (-7.8838,0.9691) (-7.3691,0.9681) (-6.8715,0.9668) (-6.3904,0.9653) (-5.9255,0.9635) (-5.4763,0.9613) (-5.0424,0.9588) (-4.6234,0.9560) (-4.2188,0.9528) (-3.8281,0.9492) (-3.4510,0.9452) (-3.0871,0.9408) (-2.7358,0.9360) (-2.3967,0.9307) (-2.0695,0.9250) (-1.7536,0.9188) (-1.4487,0.9120) (-1.1543,0.9048) (-0.8700,0.8970) (-0.5952,0.8887) (-0.3297,0.8798) (-0.0730,0.8703) (0.1754,0.8602) (0.4159,0.8495) (0.6490,0.8382) (0.8750,0.8263) (1.0944,0.8136) (1.3077,0.8003) (1.5153,0.7863) (1.7175,0.7715) (1.9149,0.7560) (2.1079,0.7398) (2.4824,0.7051) (2.6648,0.6865) (2.8445,0.6671) (3.1975,0.6258) (3.3718,0.6039) (3.5451,0.5810) (3.8906,0.5327) (4.0637,0.5071) (4.2376,0.4806) (4.5896,0.4246) (5.3223,0.3005) (5.5140,0.2668) (5.7099,0.2321) (6.1165,0.1593) (6.3279,0.1212) (6.5453,0.0820) (6.7692,0.0416) (7.0000,0.0000)};
\addplot [draw=black, line width=1pt, smooth]coordinates {(12,1) (7,1)};
\addplot [draw=black, line width=1pt, smooth]coordinates {(7.0000,1.0000) (6.6366,0.9993) (6.2956,0.9972) (5.9761,0.9937) (5.6771,0.9889) (5.3976,0.9829) (5.1367,0.9757) (4.8933,0.9672) (4.6664,0.9577) (4.4551,0.9471) (4.2585,0.9354) (4.0754,0.9227) (3.9049,0.9091) (3.7461,0.8946) (3.5979,0.8793) (3.4594,0.8631) (3.3295,0.8462) (3.2074,0.8285) (3.0919,0.8102) (2.8773,0.7717) (2.7760,0.7516) (2.6776,0.7310) (2.4851,0.6886) (2.3890,0.6668) (2.2918,0.6447) (2.0899,0.5998) (1.9833,0.5770) (1.8715,0.5541) (1.6288,0.5081) (1.4958,0.4851) (1.3537,0.4621) (1.2017,0.4393) (1.0386,0.4165) (0.8635,0.3940) (0.6754,0.3717) (0.2563,0.3279) (0.0234,0.3065) (-0.2265,0.2856) (-0.4943,0.2651) (-0.7810,0.2451) (-1.0875,0.2256) (-1.4150,0.2068) (-1.7643,0.1886) (-2.1364,0.1710) (-2.5324,0.1542) (-2.9531,0.1382) (-3.3997,0.1229) (-3.8730,0.1086) (-4.3741,0.0951) (-4.9039,0.0826) (-5.4635,0.0711) (-6.0538,0.0607) (-6.6758,0.0513) (-7.3304,0.0431) (-8.0188,0.0360) (-8.7418,0.0302) (-9.5004,0.0256) (-10.2960,0.0224) (-11.1290,0.0205) (-12.0000,0.0200)};
\node at (axis cs:8,0.15) {\footnotesize{$\hat\beta$}};
\node at (axis cs:8,1.15) {\footnotesize{$\hat\eta$}};
\end{axis}
\end{tikzpicture}
\begin{tikzpicture}
\begin{axis}[legend style={at={(axis cs:72,30)},anchor=north west},
  tick label style={font=\scriptsize},
  axis y line=middle, 
  axis x line=middle,
  ytick={0,1},
  yticklabel style={anchor=south east},
  xtick={0,12},
  xtick style={draw=none},
  xticklabels={0,\color{white}$\tau$}, 
  extra y tick style={anchor=north east,yticklabel style={anchor=east,yshift=-0.5mm}}, 
  extra y ticks={0.51,0.78,1},
  extra y tick labels={$\eta_0$,$\beta_0$},
  xlabel={\small $\xi$}, ylabel={},
every axis x label/.style={
    at={(ticklabel* cs:1.0)},
    anchor=west,
},
every axis y label/.style={
    at={(ticklabel* cs:5.0)},
    anchor=south west
},
  set layers,
  width=6.2cm,
  height=4.5cm,
  xmin=-12,
  xmax=12,
  ymin=-0.1,
  ymax=1.75]
\addplot [draw=black, line width=0.6pt, smooth, on layer=axis background]coordinates {(-12,1)(12,1)}; 
\addplot [draw=black, line width=1pt, smooth]coordinates {(-12.0000,0.9700) (-11.0810,0.9705) (-10.2000,0.9697) (-9.3546,0.9674) (-8.5444,0.9638) (-7.7685,0.9590) (-7.0259,0.9529) (-6.3157,0.9456) (-5.6367,0.9371) (-4.9882,0.9276) (-4.3691,0.9170) (-3.7785,0.9054) (-3.2153,0.8928) (-2.6788,0.8793) (-2.1678,0.8649) (-1.6814,0.8497) (-1.2188,0.8337) (-0.7788,0.8169) (-0.3605,0.7995) (0.0369,0.7813) (0.4146,0.7626) (0.7733,0.7434) (1.1142,0.7236) (1.7461,0.6826) (2.0390,0.6615) (2.3179,0.6401) (2.8374,0.5964) (3.0799,0.5743) (3.3122,0.5519) (3.7500,0.5070) (3.9574,0.4845) (4.1585,0.4620) (4.5454,0.4172) (5.2852,0.3299) (5.4686,0.3088) (5.6534,0.2881) (6.0308,0.2480) (6.2253,0.2287) (6.4250,0.2100) (6.8438,0.1743) (7.0647,0.1576) (7.2947,0.1415) (7.5347,0.1263) (7.7856,0.1118) (8.0485,0.0983) (8.3242,0.0857) (8.6137,0.0740) (8.9180,0.0634) (9.2380,0.0538) (9.5748,0.0453) (9.9292,0.0379) (10.3020,0.0318) (10.6950,0.0269) (11.1080,0.0232) (11.5430,0.0209) (12.0000,0.0200)};
\addplot [draw=black, line width=1pt, smooth]coordinates {(12.0000,0.9700) (11.4010,0.9689) (10.8240,0.9665) (10.2680,0.9628) (9.7310,0.9580) (9.2135,0.9520) (8.7145,0.9449) (8.2335,0.9368) (7.7695,0.9275) (7.3221,0.9174) (6.8904,0.9062) (6.4739,0.8941) (6.0718,0.8812) (5.6834,0.8675) (5.3080,0.8529) (4.5938,0.8216) (4.2534,0.8050) (3.9234,0.7877) (3.2915,0.7514) (2.9883,0.7325) (2.6926,0.7131) (2.1211,0.6732) (1.8439,0.6527) (1.5716,0.6319) (1.0386,0.5896) (0.0000,0.5031) (-0.2579,0.4814) (-0.5165,0.4597) (-1.0386,0.4165) (-1.3034,0.3951) (-1.5716,0.3740) (-2.1211,0.3324) (-2.4037,0.3120) (-2.6926,0.2920) (-3.2915,0.2532) (-3.6030,0.2346) (-3.9234,0.2164) (-4.5938,0.1818) (-4.9451,0.1654) (-5.3080,0.1498) (-5.6834,0.1348) (-6.0718,0.1206) (-6.4739,0.1073) (-6.8904,0.0947) (-7.3221,0.0831) (-7.7695,0.0724) (-8.2335,0.0626) (-8.7145,0.0539) (-9.2135,0.0462) (-9.7310,0.0396) (-10.2680,0.0342) (-10.8240,0.0299) (-11.4010,0.0268) (-12.0000,0.0250)};
\node at (axis cs:8,0.3) {\footnotesize{$\hat\beta$}};
\node at (axis cs:8,1.15) {\footnotesize{$\hat\eta$}};
\end{axis}
\end{tikzpicture}
\caption{Possible configurations of the extended solution $(\hat{\eta},\hat{\beta})$ as defined in~\eqref{eq-hat}: exploring the behavior at $\tau$ to determine if $(\hat{\eta},\hat{\beta})$ satisfies~\eqref{eq-sys}--\eqref{eq-bc} on $\mathbb{R}$, resulting in non-solution (left), sharp (center), or classical (right) categories. }\label{fig-extension}
\end{figure}

To verify whether $(\hat{\eta},\hat{\beta})$ is a classical wavefront of system~\eqref{eq-sys}--\eqref{eq-bc} on $\mathbb{R}$, we rely on an established result reported below due to~\cite{ArWe-78}. 

\begin{theorem}\label{th-fkpp}
Let $\rho\in\mathbb{R}$, $K>1$, and $\gamma\in C^1([0, K])$ be such that $\gamma(0)=\gamma(K)=0$, $\gamma(s) > 0$ for all $s\in(0,K)$, and $\dot\gamma(0) > 0$. Then, 
\begin{equation}\label{eq-w}
\begin{cases}
\dot w(\beta)=-\rho-\dfrac{\gamma(\beta)}{w(\beta)}, & 0<\beta<K,\\
w(\beta)<0, & 0<\beta<K,\\
w(0)=w(K)=0,
\end{cases}
\end{equation}
is solvable if and only if $\rho\geq \rho_*$, with
\begin{equation}\label{eq-rho}
2\sqrt{\dot\gamma(0)}\leq \rho_*\leq 2\sqrt{\sup_{s\in(0,K]}\frac{\gamma(s)}{s}}.
\end{equation}
Moreover, the solution $w$ is unique and satisfies 
\begin{equation}\label{eq-derw}
\dot w(0)=\frac{-\rho-\sqrt{\rho^2-4\gamma'(0)}}{2}.
\end{equation}
\end{theorem}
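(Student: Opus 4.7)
The plan is to interpret~\eqref{eq-w} as the phase-plane reduction of the scalar traveling-wave equation $u'' + \rho u' + \gamma(u) = 0$: under the ansatz $u(\xi) = \beta$ along an orbit with $u' < 0$, the map $w(\beta) := u'(\xi(\beta))$ satisfies precisely the ODE in~\eqref{eq-w}. Hence solvability of~\eqref{eq-w} is equivalent to the existence of a heteroclinic orbit of the planar system $u' = v$, $v' = -\rho v - \gamma(u)$ joining $(K,0)$ to $(0,0)$ entirely in $\{v < 0\}$. I would proceed by adapting the Aronson--Weinberger phase-plane argument, splitting the analysis into the lower and upper bounds on $\rho_*$ plus uniqueness and the identification of the slope at the origin.

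For the lower bound $\rho_* \geq 2\sqrt{\dot\gamma(0)}$, I would linearize at the equilibrium $(0,0)$: the Jacobian has eigenvalues $\lambda_{\pm} = (-\rho \pm \sqrt{\rho^2 - 4\dot\gamma(0)})/2$. When $\rho < 2\sqrt{\dot\gamma(0)}$ these are non-real with negative real part, making $(0,0)$ a stable spiral; every orbit approaching it crosses $\{v = 0\}$ infinitely often, which contradicts the requirement $w < 0$. Hence no admissible orbit can land on the origin and no solution of~\eqref{eq-w} exists for such $\rho$.

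For the upper bound and existence I would combine a supersolution with a shooting argument. A direct computation shows that $\bar w(\beta) := -\rho\beta/2$ satisfies $\dot{\bar w} \geq -\rho - \gamma(\beta)/\bar w(\beta)$ as soon as $\rho^2/4 \geq \gamma(\beta)/\beta$ on $(0, K]$, i.e., whenever $\rho \geq 2\sqrt{\sup_{s\in(0,K]} \gamma(s)/s}$. The saddle-like equilibrium $(K,0)$ admits a unique unstable orbit entering $\{v<0\}$, which yields, after parametrizing by $\beta$, a local solution $w$ on a left neighborhood of $K$; I would then trap its maximal prolongation between $\bar w$ and the constraint $w(0)=0$, and show it reaches $\beta = 0$ with $w(0)=0$. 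Monotone dependence in $\rho$ (any solution $w_{\rho_1}$ becomes a supersolution for every $\rho_2 > \rho_1$, because the right-hand side of the ODE decreases with $\rho$ when $w < 0$) allows one to define $\rho_*$ as the infimum of admissible $\rho$ and to extend existence to the whole half-line $[\rho_*, +\infty)$, yielding~\eqref{eq-rho}.

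Uniqueness at fixed $\rho$ follows by a Gronwall-type argument: if $w_1, w_2$ both solve~\eqref{eq-w}, their difference satisfies $\dot w_1 - \dot w_2 = \gamma(\beta)(w_1 - w_2)/(w_1 w_2)$, a linear equation with vanishing data at $\beta = 0$, forcing $w_1 \equiv w_2$. Finally, formula~\eqref{eq-derw} is obtained by rewriting the ODE as $w\dot w + \rho w + \gamma(\beta) = 0$, letting $\beta \to 0^+$, and applying L'H\^opital to $\gamma(\beta)/w(\beta)$, which gives the characteristic equation $\dot w(0)^2 + \rho\, \dot w(0) + \dot\gamma(0) = 0$; the correct root in~\eqref{eq-derw} is then singled out by identifying the eigendirection of the linearization along which the shooting orbit actually approaches $(0,0)$. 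The main obstacle is precisely this last root-selection, together with the delicate analysis at the threshold $\rho = \rho_*$, where the two eigenvalues coalesce and one must track the trajectory through a degenerate stable node.
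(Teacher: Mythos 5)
Two preliminary remarks. First, the paper does not prove Theorem~\ref{th-fkpp}: it is imported from the literature (the text explicitly defers the proof to~\cite{ArWe-78, Fi-79, MaMa-03}), so there is no in-paper argument to compare with; your phase-plane reconstruction ($u'=v$, $v'=-\rho v-\gamma(u)$, $w=v$ as a function of $u$) is indeed the route of those references. Second, the skeleton is sound, but three steps do not close as written. (i)~Nonexistence for small $\rho$: the eigenvalues at the origin are \emph{not} ``non-real with negative real part'' for all $\rho<2\sqrt{\dot\gamma(0)}$ (for $\rho\le-2\sqrt{\dot\gamma(0)}$ they are real and positive, and the real part is nonnegative for $\rho\le0$); you must first rule out $\rho\le 0$, e.g.\ by integrating $w\dot w+\rho w+\gamma(\beta)=0$ over $(0,K)$ to get $\rho\int_0^K w\,\mathrm{d}\beta=-\int_0^K\gamma\,\mathrm{d}\beta<0$, hence $\rho>0$. (ii)~Uniqueness is not a Gronwall argument: the coefficient $\gamma(\beta)/(w_1w_2)$ behaves like $C/\beta$ near $0$ (and like $C/(K-\beta)$ near $K$), so it is not integrable at either endpoint and ``linear equation with vanishing data at $\beta=0$'' proves nothing (compare $\dot d=Cd/\beta$, $d(0)=0$, which admits $d=A\beta^{C}$). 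What works is the sign: $\frac{\mathrm{d}}{\mathrm{d}\beta}(w_1-w_2)^2=2\gamma(\beta)(w_1-w_2)^2/(w_1w_2)\ge0$, so $(w_1-w_2)^2$ is nondecreasing on $(0,K)$ and tends to $0$ as $\beta\to K^-$, forcing $w_1\equiv w_2$. (iii)~Solvability at $\rho=\rho_*$ itself (closedness of the admissible set, needed for the ``if and only if $\rho\ge\rho_*$'') is asserted but not argued; it requires a limiting argument along $\rho_n\downarrow\rho_*$ with a compactness step.

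The substantive obstruction is~\eqref{eq-derw}, which you rightly single out as the crux but leave open --- and which in fact cannot be established, as stated, for every $\rho\ge\rho_*$. Your L'H\^opital step gives $\dot w(0)\in\{\lambda_+,\lambda_-\}$ with $\lambda_\pm=\bigl(-\rho\pm\sqrt{\rho^2-4\dot\gamma(0)}\bigr)/2$, and~\eqref{eq-derw} selects the fast root $\lambda_-$. But your own supersolution already forces the opposite choice for large $\rho$: for $\rho>2\sqrt{\sup_{s}\gamma(s)/s}$ the connecting orbit satisfies $w(\beta)>-\rho\beta/2$ on $(0,K)$, hence $\dot w(0)=\lim_{\beta\to0^+}w(\beta)/\beta\ge-\rho/2>\lambda_-$, so necessarily $\dot w(0)=\lambda_+$. (The explicit Ablowitz--Zeppetella front for $\gamma(s)=s(1-s)$ and $\rho=5/\sqrt{6}$ decays at the slow rate and confirms this.) The correct classical statement is that~\eqref{eq-derw} holds at the threshold $\rho=\rho_*$, where the connection enters the node along the fast eigendirection, while for $\rho>\rho_*$ one has $\dot w(0)=\lambda_+$; consistently, the paper only ever invokes~\eqref{eq-derw} for $w_{\rho_*}$, in Step~3 of the proof of Theorem~\ref{th-positive}. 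So the ``root selection'' you defer is not merely delicate: for $\rho>\rho_*$ the other root obtains, and any attempt to prove the displayed formula for all admissible $\rho$ must fail.
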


The proof of this theorem can be found in references such as~\cite{ArWe-78, Fi-79, MaMa-03}. By applying this result, we exploit a singular first-order reduction technique.

\begin{description}[leftmargin=*]
\item[Reduction to a singular first-order problem.]
For every $c>0$, let $(\eta,\beta)$ be the extended semi-wavefront on the interval $[0, \tau)$. By Proposition~\ref{prop-3}, it follows that $\beta$ is a strict monotone function in $[0, \tau)$. Hence, we can define
\[N(\beta):=\eta\left(\xi(\beta)\right), \quad 0< \beta\leq \beta_0,
\]
where $\xi=\xi(\beta)$ is the inverse function of $\beta$. Let 
\begin{equation}\label{eq-z}
z_c(\beta):=g\left(N(\beta)\right)h(\beta)\beta'\left(\xi(\beta)\right), \quad 0<\beta\leq\beta_0.
\end{equation}
Notice that $z_c(\beta)<0$ for all $\beta\in(0,\beta_0)$. From~\eqref{eq-ode-2}, we obtain
\begin{equation}\label{eq-z}
\dot z_c(\beta)=-c-\frac{f(N(\beta), \beta) g(N(\beta)) h(\beta)}{z_c(\beta)}, \quad 0<\beta\leq\beta_0.
\end{equation}
\end{description}

Let $K>1$ be fixed. In the following result, we will exploit Theorem~\ref{th-fkpp} and so we define $\gamma\colon [0, K] \to \mathbb{R}$ such that
\[
\gamma(\beta):=\begin{cases}
\displaystyle L_2 h(\beta) \max_{[0,1]}g,& \beta\in[0,1],\\
\gamma_0(\beta),& \beta\in(1,K],
\end{cases}\]
where $\gamma_0(\beta)$ is any $C^1$ strictly positive function on $(1,K)$ that smoothly connects the points $(1,\gamma(1))$ and $(K,0)$. This way $\gamma\in C^1([0,K])$ with $\gamma(0)=\gamma(K)=0$, $\gamma(\beta)>0$ for every $\beta\in(0,K)$, and $\dot\gamma(0)>0$ by $(A_3)$. 

\begin{theorem}\label{th-positive}
There is $\rho_*>0$ such that, for all $c>\rho_*$, $(\hat\eta,\hat\beta)$ is a classical wavefront of~\eqref{eq-sys}--\eqref{eq-bc}.
\end{theorem}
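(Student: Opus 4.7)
The plan is to compare the first-order reduction $z_c$ with the barrier $w$ provided by Theorem~\ref{th-fkpp} applied to the function $\gamma$ defined just above the statement. Since $\gamma \in C^1([0,K])$, $\gamma(0)=\gamma(K)=0$, $\gamma>0$ on $(0,K)$, and $\dot\gamma(0)=L_2\dot h(0)\max_{[0,1]}g>0$, Theorem~\ref{th-fkpp} yields $\rho_*>0$ such that, for every $c\geq \rho_*$, a unique $w \in C^1([0,K])$ with $w(0)=w(K)=0$ and $w<0$ on $(0,K)$ solves \eqref{eq-w}. By \eqref{eq-derw},
\[
\dot w(0)=\frac{-c-\sqrt{c^2-4\dot\gamma(0)}}{2},
\]
so $\dot w(0)>-c$ \emph{strictly} whenever $c>\rho_*$; this strict slope inequality will later distinguish classical from sharp behavior.

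By Lemma~\ref{lemma1} and Proposition~\ref{prop-3}, $\beta$ is strictly decreasing on $(-\infty,\tau)$ and attains every value in $(0,1)$, so $z_c$ extends naturally to a negative function on $(0,1)$ satisfying the ODE in \eqref{eq-z}. Using Lemma~\ref{lemma1bis}$(i)$ together with $g(0)=0$, one gets $z_c(\beta)\to 0$ as $\beta\to 1^-$. The crucial pointwise estimate is
\[
f(N(\beta),\beta)\,g(N(\beta))\,h(\beta) \leq L_2\,N(\beta)\,\beta \cdot \max_{[0,1]}g \cdot h(\beta) \leq \gamma(\beta),\qquad \beta\in(0,1),
\]
with \emph{strict} inequality on the interior since $N(\beta),\beta\in(0,1)$.

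The main step is a maximum-principle-type comparison proving $z_c(\beta)>w(\beta)$ on all of $(0,1)$. Since $z_c(1^-)=0>w(1)$, the inequality holds near $\beta=1$; if it first failed, set $\beta^*:=\sup\{\beta\in(0,1): z_c(\beta)\leq w(\beta)\}\in(0,1)$. By continuity $z_c(\beta^*)=w(\beta^*)=:v<0$ and $z_c>w$ on $(\beta^*,1)$, which forces $\dot z_c(\beta^*)\geq \dot w(\beta^*)$. However, subtracting the two ODEs at $\beta^*$ yields
\[
\dot z_c(\beta^*)-\dot w(\beta^*) = \frac{\gamma(\beta^*)-f(N(\beta^*),\beta^*)\,g(N(\beta^*))\,h(\beta^*)}{v} < 0,
\]
a contradiction. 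Squeezing $w\leq z_c<0$ with $w(0)=0$ then gives $z_c(\beta)\to 0$ as $\beta\to 0^+$, which is exactly condition \eqref{eq-lim}, so by Corollary~\ref{cor-cond}, $(\hat\eta,\hat\beta)$ is a wavefront of \eqref{eq-sys}--\eqref{eq-bc}.

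To conclude classicality, the inequality $z_c\geq w$ near $0$ implies $\liminf_{\beta\to 0^+} z_c(\beta)/\beta\geq \dot w(0)>-c$. Were $(\hat\eta,\hat\beta)$ sharp, Corollary~\ref{cor-alt} would force $\tau<\infty$ and $\beta'(\tau^-)=-c/(g(1)\dot h(0))$; using $h(\beta)/\beta\to\dot h(0)$ and $N(\beta)\to 1$, one would obtain $z_c(\beta)/\beta\to -c$, contradicting the previous bound. Hence $\tau=+\infty$ and $(\hat\eta,\hat\beta)=(\eta,\beta)\in C^2(\mathbb R)$ is classical. The principal technical obstacle is the comparison step, where the \emph{strict} inequalities $\gamma>fgh$ in the interior (ruling out tangential crossings) and $\dot w(0)>-c$ for $c>\rho_*$ (ruling out the sharp alternative) are both essential.
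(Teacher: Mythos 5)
Your proposal is correct in substance and follows the same overall strategy as the paper: the singular first-order reduction $z_c$, a comparison with the barrier supplied by Theorem~\ref{th-fkpp} for the majorant $\gamma$, the conclusion $z_c(0)=0$ yielding the wavefront via Corollary~\ref{cor-cond}, and a slope estimate at $\beta=0$ ruling out the sharp alternative of Corollary~\ref{cor-alt}. You set up the comparison differently in two respects. First, you anchor it at $\beta\to1^-$, where $z_c(\beta)=c\left(1-\beta-N(\beta)\right)\to 0>w(1)$; the paper instead anchors it at $\beta_0=\beta(0)$ after normalizing the semi-wavefront through the ad hoc choice $\eta_0=(1-\delta_0)\mu_{\delta_0}/c$, which your version makes unnecessary --- a genuine simplification, legitimate because $z_c$ viewed as a function of $\beta$ on $(0,1)$ does not depend on the normalization. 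Second, you compare $z_c$ with $w_c$ (the solution of~\eqref{eq-w} at $\rho=c$) rather than with $w_{\rho_*}$, so the strict sign in $\dot z_c(\beta^*)-\dot w(\beta^*)<0$ must come entirely from the strict inequality $\gamma(\beta^*)>f(N(\beta^*),\beta^*)g(N(\beta^*))h(\beta^*)$, whereas the paper obtains it from $\rho_*-c<0$ and only needs the non-strict bound $\gamma\geq fgh$. This is the one delicate point in your argument: the justification ``since $N(\beta),\beta\in(0,1)$'' presupposes $N(\beta)<1$, which is essentially what is being proved; by Proposition~\ref{prop-3}, $\eta$ may a priori exceed $1$ on $(\sigma,\tau)$ when $\sigma<\tau$, so $N(\beta^*)\geq 1$ cannot be excluded at the touching point, and strictness there depends on the unspecified continuous extensions of $f$ and $g$ beyond $s=1$ (the paper's variant sidesteps this). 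Two minor further remarks: the bound $\dot w(0)>-c$ from~\eqref{eq-derw} holds for every admissible $c$, being equivalent to $\dot\gamma(0)>0$, so it is not tied to $c>\rho_*$; and your closing ``hence $\tau=+\infty$'' overreaches --- excluding the sharp alternative only gives $\lim_{\xi\to\tau^-}\beta'(\xi)=0$, which by Remark~\ref{rem-sharp} already makes the wavefront classical whether or not $\tau$ is finite, so the theorem's conclusion is unaffected.
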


\begin{proof}
By Theorem~\ref{th-fkpp}, problem~\eqref{eq-w} has a unique solution $w_{\rho_*}$ for $\rho=\rho_*$ satisfying~\eqref{eq-rho}. Let $c>\rho_*$ be fixed. Given $\delta\in(0,1)$, let us take
\[
\mu_\delta:=-\max_{\beta\in[\delta,1]}w_{\rho_*}(\beta).
\] 
Since $w_{\rho_*}(0)=0$, there is $\delta_0\in(0,1)$ such that $\mu_{\delta_0}<\rho_*.$ Let us take
\[
\eta_0:=(1-\delta_0)\frac{\mu_{\delta_0}}{c}.
\]
By using Theorem~\ref{th-m}, let $(\eta,\beta)$ be the extended semi-wavefront on $(-\infty,\tau)$ passing through $(\eta_0,\beta_0)$. Notice that $\eta_0<1-\delta_0$ and so $\beta_0\geq 1-\eta_0>\delta_0$. 

Let us consider the solution $z_c$ of~\eqref{eq-z} obtained by reducing to the singular first-order problem, as reported above. Next, we divide the proof into three steps.

\noindent\textit{Step~$1$.} We claim that $z_c(0)=0$ and 
\begin{equation}\label{eq-zw}
z_c(\beta)>w_{\rho_*}(\beta),\quad \forall\beta\in(0,\beta_0).
\end{equation}
To this purpose, from~\eqref{eq-aux1}, notice that
\[
z_c(\beta_0)=c-c\beta_0-cN(\beta_0)>-cN(\beta_0)=-(1-\delta_0)\mu_{\delta_0}>-\mu_{\delta_0}\geq w_{\rho_*}(\beta_0).
\]
To prove the claim, we argue by contradiction, and so we suppose that there is $\beta_1\in(0,\beta_0)$ such that 
\begin{equation}\label{eq-dd}
z_c(\beta_1)-w_{\rho_*}(\beta_1)=0, \quad z_c(\beta)-w_{\rho_*}(\beta)>0, \quad  \forall\beta\in(\beta_1,\beta_0).
\end{equation} 
Since 
\[\gamma(\beta_1)=L_2 h(\beta_1) \max_{[0,1]}g\geq f\left(N(\beta_1),\beta_1\right)g\left(N(\beta_1)\right)h(\beta_1),\]
and $w_{\rho_*}(\beta_1)<0$, then we deduce that
\[\begin{split}
\dot z_c(\beta_1)-\dot  w_{\rho_*}(\beta_1)
&=-c-\frac{f(N(\beta_1), \beta_1) g(N(\beta_1)) h(\beta_1)}{z(\beta_1)}+\rho_*+\frac{\gamma(\beta_1)}{w_{\rho_*}(\beta_1)}\\
&=\rho_*-c+\frac{\gamma(\beta_1)-f(N(\beta_1), \beta_1) g(N(\beta_1)) h(\beta_1)}{w_{\rho_*}(\beta_1)}<0,
\end{split}\]
which is in contradiction with~\eqref{eq-dd} and this proves~\eqref{eq-zw}. An immediate consequence is that 
\[0= \lim_{\beta\to0^+}w_{\rho_*}(\beta)\leq \lim_{\beta\to0^+}z_c(\beta)=c(1-\eta(\tau))\leq0,
\] 
by Proposition~\ref{prop1} and Proposition~\ref{prop-3}. Hence $z_c(0)=0$, concluding the proof of the claim. 

\noindent\textit{Step~$2$.} We claim that the couple of functions $(\hat{\eta},\hat{\beta})$ defined as in~\eqref{eq-hat} is a wavefront of system~\eqref{eq-sys}--\eqref{eq-bc}.
Thanks to~\eqref{eq-z} and the previous step, we obtain
\[\begin{split}
0&=\lim_{\beta\to0^+}z_c(\beta)=\lim_{\beta\to0^+} g(N(\beta)) h(\beta)\beta'\left(\xi(\beta)\right)\\
&=g(1)\lim_{\xi\to\tau^-} h(\beta(\xi)) \beta'(\xi)= g(1)\lim_{\xi\to\tau^-} h(\hat\beta(\xi))\hat\beta'(\xi).
\end{split}\]
By $(A_2)$, since $g(1)>0$, we have $\lim_{\xi\to\tau^-} h(\hat\beta(\xi))\hat\beta'(\xi)=0$. Hence, based on Corollary~\ref{cor-cond}, we proved the claim. 

\noindent\textit{Step~$3$.} We claim that $(\hat\eta,\hat\beta)$ is a classical wavefront. Notice that 
\[
\dot z_c(0)=\lim_{\beta\to0^+} \frac{z_c(\beta)}{\beta}
=g(1)\dot h(0)\lim_{\xi\to\tau^-} \beta'(\xi).
\]
By \eqref{eq-alt}, $\lim_{\beta\to0^+}\dot z_c(\beta)$ equals $0$ or $-c$. From \textit{Step~$1$} and~\eqref{eq-derw}, $\dot z_c(0)\geq \dot w_{\rho^*}(0)\geq -\rho^*>-c$. This way, $\dot z_c(0)=0$ and, from $(A_2)$--$(A_3)$, it follows $\beta'(\tau)=0$. Therefore, $(\hat\eta,\hat\beta)$ is a classical wavefront and this ends the proof.
\end{proof}

\begin{remark} \label{rem-dotz}
According to Corollary \ref{cor-cond} and the definition of $z_c$ in~\eqref{eq-z}, $(\eta,\beta)$ is a wavefront of \eqref{eq-sys}--\eqref{eq-bc}  if and only if $ z_c(0)=0. $ Moreover, in this case, we have
\[ g(1)\dot h(0)\lim_{\xi\to\tau^-} \beta'(\xi) =\dot z_c(0) = c\lim_{\xi \to \tau^-} \biggl[ -1 - \frac {\eta'(\xi)} {\beta'(\xi)} \biggr].
\] 
As a consequence, by recalling Remark~\ref{rem-sharp}, we deduce that the wavefront is classical if and only if
\[ \lim_{\xi \to \tau^-} \frac {\eta'(\xi)} {\beta'(\xi)}= -1, \]
and the wavefront is sharp if and only if
\[ \lim_{\xi \to \tau^-} \frac {\eta'(\xi)} {\beta'(\xi)}= 0. \]
\end{remark}

\begin{remark}\label{rem-star}
By the arbitrariness of the choice of $\gamma_0$, the existence of a classical wavefront of~\eqref{eq-sys}--\eqref{eq-bc} holds for all $c>c_*$, where
\begin{equation}\label{c-star}
c_*:=2\sqrt{L_2\max_{s\in[0,1]}g(s)\sup_{r\in(0,1]}\frac{h(r)}{r}}.
\end{equation}
In fact, we can choose a sequence $\{\gamma_{0,n}\}_n$ where $\gamma_{0,n} \colon[1,K_n]\to\mathbb{R}$ is of class $C^1$ and such that $\gamma_{0,n}(1)=L_2(\max_{[0,1]}g)h(1)$, $\dot\gamma_{0,n}(1)=L_2(\max_{[0,1]}g)\dot h(1)$, $\gamma_{0,n}(K_n)=0$, and 
\[
\lim_{n\to+\infty}\max_{s\in[1,K_n]} \gamma_{0,n}(s)= L_2\max_{s\in[0,1]}g(s)h(1).
\]
Then, by taking
\[\gamma_n(\beta):=\begin{cases}
\displaystyle L_2  (\max\nolimits_{[0,1]}g) h(\beta),& \beta\in[0,1],\\
\gamma_{0,n}(\beta),& \beta\in(1,K_n],
\end{cases}\]
it follows that 
\[\lim_{n\to+\infty}\sqrt{\sup_{s\in(0,K_n]}\frac{\gamma_n(s)}{s}}= \sqrt{L_2\max_{s\in[0,1]}g(s)\sup_{r\in(0,1]}\frac{h(r)}{r}}.
\]
Consequently, by applying Theorem~\ref{th-positive} for every $n$, we can deduce there is a classical wavefront of~\eqref{eq-sys}--\eqref{eq-bc} for all $c>\rho_{*,n}$, where 
\[\rho_{*,n}\leq 2\sqrt{\sup_{s\in(0,K_n]}\frac{\gamma_n(s)}{s}}.\]
Then, by passing to the limit as $n\to+\infty$, we obtain~\eqref{c-star}.
\end{remark}

\section{Qualitative properties of wavefronts}\label{section-4}

Regarding assumptions $(A_1)$--$(A_3)$, one can assume the regularity of the functions in a neighborhood of $0$ and $1$.

Let us consider the set of admissible speeds:
\[
\mathcal{C}=\{c>0\colon \text{\eqref{eq-sys}--\eqref{eq-bc} has a wavefront $(\eta,\beta)$ in }\mathbb{R}\}.
\] 

\begin{remark}\label{rem41}
According to Theorem~\ref{th-positive} and Remark~\ref{rem-star}, we observe that the set $\mathcal{C}$ is non-empty, specifically containing the interval $(c_*,+\infty)$, where $c_*$ is defined as in~\eqref{c-star}. Furthermore, we can show that $c_\sharp<c_*$, where $c_\sharp$ is defined as in~\eqref{est-c}. Accordingly, 
\[
\int_0^1 g(1-r)h(r)r\,\mathrm{d}r\leq \frac{1}{3}\max_{[0,1]}g \sup_{(0,1]}\frac{h(s)}{s}, \quad \int_0^1 (1-r)g(1-r)h(r)r\,\mathrm{d}r\leq \frac{1}{20}\max_{[0,1]}g \sup_{(0,1]}\frac{h(s)}{s}.
\]
In this manner, since $M_g\leq1$ and $L_1\leq L_2$, we obtain that 
\[
c_{\sharp}\leq \sqrt{L_1 M_g\frac{1}{3}\max_{[0,1]}g \sup_{(0,1]}\frac{h(s)}{s}} \leq 2 \sqrt{L_2\max_{[0,1]}g \sup_{(0,1]}\frac{h(s)}{s}}=c_*.
\]
As a consequence, we have
\[
(-\infty,c_\sharp)\cap\mathcal{C}=\emptyset, \qquad \left( c_*, + \infty \right) \subset \mathcal C, \qquad \left[c_\sharp, c_*\right]\not=\emptyset.
\]
In the general case, it is not possible to guarantee that $ c_\sharp \leq 2 \sqrt{\dot{\gamma}(0)}. $ However, when $ f(s,r)=sr, g(s)=s $, and $ h(r)=r, $ like in \cite{SMGA-01}, $ \gamma(s) = s,$ for every $s\in(0,1]$ and, according to Remark~\ref{rem-csharp}, $ c_\sharp = \sqrt {\frac 1 {12}} < 2 = 2 \sqrt{\dot {\gamma}(0)}=c_*.$ We stress that in~\cite{SMGA-01}, only a lower bound of the threshold speed is provided while no upper bound is given. 
\end{remark}

In this section, we extensively use the following reduction to prove both the connection and the closure of $\mathcal{C}$.

\begin{description}[leftmargin=*]
\item[Reduction to another singular first-order problem.]
For every $c>0$, let $(\eta,\beta)$ be the extended semi-wavefront on the interval $[0, \sigma)$, where $\sigma$ is defined as in~\eqref{eq-st}. Instead of looking at $\beta$, as in Section~\ref{sub-ex-p}, now again using Proposition~\ref{prop-3}, we observe that $\eta$ is a strict monotone function in $[0, \sigma)$. Hence, we can define
\[B_c(\eta):=\beta\left(\xi(\eta)\right), \quad 0< \eta< 1,
\]
where $\xi=\xi(\eta)$ is the inverse function of $\eta$. Notice that $0< B_c(\eta)<1$, for all $\eta\in(0,1)$. From~\eqref{eq-aux1} and~\eqref{eq-bc1}, we obtain
\begin{equation}\label{eq-bb}
\begin{cases}
\dot B_c(\eta)=\dfrac{c^2\left(1-\eta-B_c(\eta)\right)}{g(\eta)h(B_c(\eta)) f(\eta,B_c(\eta))}, \quad 0 < \eta <1, \\
B_c(0)=1.
\end{cases}
\end{equation}
\end{description}

\begin{lemma} \label{lem-dotB}
For all $c>0$, $\dot B_c(0)=-1.$	
\end{lemma}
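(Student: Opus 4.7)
The plan is to analyze the ODE~\eqref{eq-bb} as $\eta\to 0^+$, where both the numerator and the denominator on the right-hand side vanish simultaneously, and to identify the balance that forces $\dot B_c(0)=-1$.

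First, I would record the Taylor expansions at the equilibrium $(\eta,\beta)=(0,1)$. Since $(A_1)$ yields $f(0,r)\equiv 0$ on $[0,1]$, Hadamard's lemma gives $f(s,r)=s\,\tilde f(s,r)$ with $\tilde f$ continuous on $[0,1]^2$ and $\tilde f(0,1)=\partial_s f(0,1)\geq L_1>0$; in particular $f(\eta,B_c(\eta))/\eta\to \partial_s f(0,1)$ as $\eta\to 0^+$, using $B_c(0)=1$. Similarly, $(A_2)$ gives $g(\eta)/\eta\to\dot g(0)>0$, and $(A_3)$ combined with $B_c(\eta)\to 1$ gives $h(B_c(\eta))\to h(1)>0$.

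Second, assuming $\dot B_c(0)=\ell\in\mathbb{R}$ exists, I would write $B_c(\eta)=1+\ell\eta+o(\eta)$ and substitute into the right-hand side of~\eqref{eq-bb}. The numerator becomes $c^2(1-\eta-B_c(\eta))=-c^2(1+\ell)\eta+o(\eta)$, while the denominator $g(\eta)h(B_c(\eta))f(\eta,B_c(\eta))$ behaves like $\dot g(0)\,h(1)\,\partial_s f(0,1)\,\eta^2+o(\eta^2)$. Hence
\[
\dot B_c(\eta)\;\sim\;\frac{-c^2(1+\ell)}{\dot g(0)\,h(1)\,\partial_s f(0,1)}\,\frac{1}{\eta}\qquad\text{as }\eta\to 0^+,
\]
which is compatible with $\dot B_c(\eta)\to\ell\in\mathbb{R}$ only if $1+\ell=0$, forcing $\ell=-1$.

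The main obstacle is to justify that $\dot B_c(0)$ exists as a finite limit, without which the formal computation above is meaningless. I would settle this by invoking the center manifold analysis performed in the proof of Theorem~\ref{th-uniq}: the trajectory $(p,q)$ of the desingularized system~\eqref{eq-manifold} satisfying~\eqref{equilibrium} lies on a (unique) $C^1$ invariant manifold through the origin. Translating back through $\eta=p$ and $\beta=q-p+1$, this gives $B_c(\eta)=1-\eta+\phi(\eta)$ with $\phi\in C^1$ near $0$ and $\phi(0)=0$, so $B_c$ is $C^1$ at $\eta=0^+$ and $\dot B_c(0)$ is finite. The calculation above then isolates $\dot B_c(0)=-1$ as the only admissible value.
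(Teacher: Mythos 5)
Your proof is correct, but the way you secure the existence and finiteness of $\dot B_c(0)$ --- the step you rightly identify as the crux --- is genuinely different from the paper's. The paper stays entirely within elementary ODE estimates: it introduces the auxiliary function $\psi(\eta)=\frac{1-B_c(\eta)-\eta}{g(\eta)f(\eta,B_c(\eta))}$ and runs a $\liminf$/$\limsup$ oscillation argument (the same device as in Lemma~\ref{lemma1bis}) to show that $\lim_{\eta\to0}\dot B_c(\eta)$ exists, and then combines $\dot B_c\leq 0$, the inequality $B_c(\eta)-1>-\eta$, and L'H\^opital with exactly the order-of-vanishing balance you use in order to force the value $-1$. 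You instead import the center-manifold representation from the proof of Theorem~\ref{th-uniq}: the trajectory of~\eqref{eq-manifold} satisfying~\eqref{equilibrium} lies on a $C^1$ invariant graph through the origin, which after undoing the desingularization gives $B_c(\eta)=1-\eta+\phi(\eta)$ with $\phi\in C^1$ and $\phi(0)=0$, so $\dot B_c(0)$ is automatically finite; your closing computation is then identical to the paper's endgame (and to the expansion~\eqref{eq-x2} used later for Lemma~\ref{lemma-c1}). Both routes are sound: the paper's is self-contained and needs only the monotonicity information from Section~\ref{section-2}, while yours is shorter but couples the lemma to the dynamical-systems analysis of Section~\ref{sub-uniq}; note also that if you invoked the full tangency statement (the center manifold is tangent to the kernel of the linearization of~\eqref{eq-manifold}, which is the $p$-axis, so $\phi'(0)=0$), you would get $\dot B_c(0)=-1$ at once and your balance argument would become redundant. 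One caveat worth flagging: the eigenvector $e_1=(1,-1)$ displayed in the proof of Theorem~\ref{th-uniq} would give tangency $q\approx -p$ and hence $\dot B_c(0)=-2$; the Jacobian of~\eqref{eq-manifold} at the origin is in fact $\mathrm{diag}(0,-c)$ with center direction $(1,0)$, which is what makes everything consistent. Your write-up is insensitive to this because you only use $\phi\in C^1$ and $\phi(0)=0$, not the value of $\phi'(0)$.
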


\begin{proof}
Assume by contradiction that $\lim_{\eta \to 0} \dot B_c(\eta)$ does not exist, then we have also that the limit of the following $C^1$ function
\[
\psi(\eta)=\frac{1-B_c(\eta)-\eta}{g(\eta) f(\eta,B_c(\eta))}
\]
does not exist, since
\begin{equation}\label{eq-psi1}
\dot B_c(\eta)=\frac{c^2\psi(\eta)}{h(B_c(\eta))}.
\end{equation} 
Hence, we assume that
\[
\ell_1:=\liminf_{\eta\to0} \psi(\eta)<\limsup_{\eta \to 0 } \psi(\eta)=:\ell_2.
\]
Let $\ell\in(\ell_1,\ell_2)$. Consider two sequences $\{\xi_n \}_n$ and $\{\sigma_n \}_n$ such that $\xi_n\to 0$ and $\sigma_n\to 0$ with $\lim_{n\to\infty} \psi(\xi_n)=\ell=\lim_{n\to\infty} \psi(\sigma_n)$, $\dot \psi(\xi_n)>0$, and $ \dot \psi(\sigma_n)<0$, for all $n\in\mathbb{N}$. Thanks to the following computation
\[
\dot \psi (\eta) =\frac{-1 - \dot B_c(\eta) - \psi(\eta) [\dot g(\eta)f(\eta,B_c(\eta)) + g(\eta) (f'_{\eta}(\eta,B_c(\eta)) + f'_{\beta}(\eta,B_c(\eta))\dot B_c(\eta))]} {g(\eta)f(\eta,B_c(\eta))},
\]
and $(A_1)$--$(A_2)$, we obtain
\[\begin{split}
-1 - \dot B_c(\xi_n)- \psi(\xi_n) \Big(\dot g(\xi_n)f(\xi_n,B_c(\xi_n)) &+ g(\xi_n) (f'_{\eta}(\xi_n,B_c(\xi_n)) \\
&+ f'_{\beta}(\xi_n,B_c(\xi_n))\dot B_c(\xi_n))\Big) >0, \quad \forall n\in\mathbb{N},\\
-1 - \dot B_c(\sigma_n) - \psi(\sigma_n) \Big(\dot g(\sigma_n)f(\sigma_n,B_c(\sigma_n)) &+ g(\sigma_n) (f'_{\eta}(\sigma_n,B_c(\sigma_n)) \\
&+ f'_{\beta}(\sigma_n,B_c(\sigma_n))\dot B_c(\sigma_n))\Big)<0, \quad \forall n\in\mathbb{N}.
\end{split}\]
From $(A_1) $ we get that $ f'_{\eta} (0,1) \geq L_1 > 0 $ and $ f'_{\beta}(0,1)=0 $ thus, by passing to the limit as $n\to+\infty$ and using~\eqref{eq-psi1}, we deduce that
\[\lim_{n \to +\infty} = -1 -\frac{c^2}{h(1)}\ell\geq0,\quad
\lim_{n \to +\infty} = -1-\frac{c^2}{h(1)}\ell\leq0.
\]
Thus, it follows $\ell=-\frac {h(1)}{c^2}$, which contradicts the arbitrariness of $\ell$. Hence, $\lim_{\eta\to 0} \psi(\eta) $ exists. According to $(A_1)$--$(A_3) $ and Propositions \ref{th-beta} and \ref{prop-3} we have that $ \dot B_c(\eta) \le 0 $ for every $ \eta \in (0,1). $ Moreover, $ B_c(\eta) - 1 > - \eta $ and $ B_c(0)=1.$ Hence, by the L'Hôpital rule, $\dot B_c(0)=-1$. This ends the proof.
\end{proof}

\begin{remark}
In Section \ref{sub-uniq}, the unique semi-wavefront $ (\eta,\beta) $ of~\eqref{eq-sys}--\eqref{eq-bc} is associated with the desingularized first-order problem~\eqref{eq-manifold}, that we linearized around the equilibrium point $ (0,1). $ The property $ \dot B_c(0)=-1 $ is consistent with the proof of Theorem~\ref{th-uniq} where we proved that the solution of the linearized problem leaves the equilibrium point along the central manifold which is tangent to the eigenspace generated by the vector $ (1,-1). $ 
\end{remark}

We now provide a necessary and sufficient condition so that a speed is admissible by exploiting the behavior of the solution of~\eqref{eq-bb}. 
\begin{lemma}\label{lem-cc}
$ c \in\mathcal{C}$ if and only if $B_c(1)=0.$	
\end{lemma}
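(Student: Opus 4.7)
The plan is to reduce the equivalence to the identity $\tau=\sigma$, using Corollary~\ref{cor-cond} together with Theorem~\ref{th-uniq}, and then to translate that identity into the behaviour of $B_c$ at the endpoint $\eta=1$. More precisely, by Theorem~\ref{th-uniq} the semi-wavefront on $(-\infty,0]$ is unique up to translation, hence any wavefront of speed $c$ must coincide, after a shift, with the extended solution $(\eta,\beta)$ on $(-\infty,\tau)$ constructed in Section~\ref{sub-ex-p}; and by Corollary~\ref{cor-cond} this extension produces a wavefront of~\eqref{eq-sys}--\eqref{eq-bc} if and only if $\tau=\sigma$. Thus $c\in\mathcal{C}$ is equivalent to $\tau=\sigma$, and the only remaining task is to recast this condition in terms of $B_c(1)$.

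To do this I would invoke Proposition~\ref{prop-3}(i), which states that $\eta$ is a strictly increasing $C^1$-diffeomorphism from $(-\infty,\sigma)$ onto $(0,1)$, so that $B_c$ is well defined on $(0,1)$ and
\[
B_c(1)=\lim_{\eta\to 1^-}B_c(\eta)=\lim_{\xi\to\sigma^-}\beta(\xi),
\]
the limit existing because $\beta$ is strictly decreasing on $(-\infty,\tau)\supseteq(-\infty,\sigma)$, again by Proposition~\ref{prop-3}(i). I would then split into two sub-cases. If $\tau=+\infty$, Proposition~\ref{prop-3}(ii) gives $\eta(+\infty)=1$ and $\beta(+\infty)=0$, whence $\sigma=+\infty=\tau$ (so $c\in\mathcal{C}$) and simultaneously $B_c(1)=0$. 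If $\tau<+\infty$, Proposition~\ref{prop-3}(iii) yields $\eta(\tau)\geq 1$, so that $\sigma\in[0,\tau]$ is finite; since $\beta$ is continuous and strictly positive on $(-\infty,\tau)$ with $\beta(\tau)=0$, the limit $\beta(\sigma^-)=\beta(\sigma)$ vanishes precisely when $\sigma=\tau$.

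Combining the two sub-cases gives both implications at once: if $c\in\mathcal{C}$ then $\tau=\sigma$, so $B_c(1)=\beta(\sigma)=\beta(\tau)=0$; conversely, if $B_c(1)=0$ then $\beta(\sigma^-)=0$, which forces $\sigma=\tau$ and therefore, by Corollary~\ref{cor-cond}, $c\in\mathcal{C}$.

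I do not expect any genuine obstacle in this lemma, since all the machinery is already in place. The only point requiring care is the uniform handling of the cases $\tau<+\infty$ and $\tau=+\infty$, ensuring that the passage from $\tau=\sigma$ to $B_c(1)=0$ is meaningful in both regimes; this is why I explicitly appeal to parts (ii) and (iii) of Proposition~\ref{prop-3} in parallel.
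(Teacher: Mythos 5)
Your proposal is correct and follows essentially the same route as the paper: both reduce the statement to the equivalence $c\in\mathcal{C}\Leftrightarrow\tau=\sigma$ via Corollary~\ref{cor-cond} and then identify $B_c(1)$ with $\beta(\sigma)$, which vanishes precisely when $\sigma=\tau$. Your explicit appeal to Theorem~\ref{th-uniq} (to ensure that any wavefront of speed $c$ coincides with the extended semi-wavefront defining $B_c$) and the separate treatment of $\tau=+\infty$ are sound refinements of details the paper leaves implicit.
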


\begin{proof}
If $ c \in\mathcal{C}$, then, by Corollary~\ref{cor-cond}, $\sigma=\tau$. Thus, $B_c(1)=\beta(\xi(1))=\beta(\sigma)=0$. On the other hand, if $ c \not\in\mathcal{C}$, then $\sigma<\tau$ and $\beta(\sigma)>0$. Therefore, $B_c(1)=\beta(\xi(1))=\beta(\sigma)>0$, concluding the proof.
\end{proof}

The following result describes the monotonicity of the solution $B_c(\eta)$ to problem~\eqref{eq-bb} with respect to the wave speed.
\begin{lemma}\label{lemma-c1}
For all $c_1,c_2 > 0 $, if $c_1<c_2$, then $B_{c_1}(\eta)>B_{c_2}(\eta)$ for every $\eta\in(0,1)$.
\end{lemma}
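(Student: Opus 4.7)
The plan is a three-step argument by contradiction, organized as a first-contact comparison for the ODE~\eqref{eq-bb}: since the speed enters~\eqref{eq-bb} only through the multiplicative factor $c^2$, any meeting of $B_{c_1}$ and $B_{c_2}$ at some $\eta>0$ would force a derivative comparison whose sign is rigidly dictated by $c_1^2-c_2^2$ and $1-\eta-B$. I would first establish the auxiliary strict inequality $B_c(\eta) > 1-\eta$ for every $\eta \in (0,1)$. Lemma~\ref{lemma1}(iv) already gives $\phi_c(\eta) := B_c(\eta) + \eta - 1 \geq 0$; if equality held at some interior $\eta_1 \in (0,1)$, then $\eta_1$ would be a minimum of $\phi_c$, forcing $\dot B_c(\eta_1) = -1$, whereas~\eqref{eq-bb} at such a point (where the denominator is strictly positive by $(A_1)$--$(A_3)$) returns $\dot B_c(\eta_1) = 0$, a contradiction.

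The hard part will be the second step, namely the local separation $B_{c_1} > B_{c_2}$ on a right-neighbourhood of $0$. This is delicate because, by Lemma~\ref{lem-dotB}, both solutions satisfy $B_c(0)=1$ with $\dot B_c(0)=-1$, so they leave the singular initial point with the same tangent and no first-order information distinguishes them. I would extract the second-order behaviour directly from~\eqref{eq-bb}, rewritten as
\[
c^2\,\frac{\phi_c(\eta)}{\eta^2} \;=\; -\dot B_c(\eta)\cdot\frac{g(\eta)}{\eta}\cdot h(B_c(\eta))\cdot\frac{f(\eta,B_c(\eta))}{\eta},
\]
and pass to the limit $\eta \to 0^+$ factor by factor: $\dot B_c(\eta)\to -1$; $g(\eta)/\eta\to\dot g(0)>0$ by $(A_2)$; $h(B_c(\eta))\to h(1)>0$; and, using $f(0,r)\equiv 0$ together with the $C^1$ regularity in $(A_1)$, $f(\eta,B_c(\eta))/\eta\to\partial_\eta f(0,1)\in [L_1,L_2]$. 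This yields
\[
\lim_{\eta\to 0^+}\frac{\phi_c(\eta)}{\eta^2} \;=\; \frac{\dot g(0)\, h(1)\,\partial_\eta f(0,1)}{c^2},
\]
a positive quantity strictly decreasing in $c$. Consequently $\phi_{c_1}(\eta) > \phi_{c_2}(\eta)$, equivalently $B_{c_1}(\eta) > B_{c_2}(\eta)$, on some right-neighbourhood $(0,\delta)$.

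Finally, to close the contradiction, suppose the claim fails and set $\eta^* := \inf\{\eta\in(0,1): B_{c_1}(\eta)\leq B_{c_2}(\eta)\}$; the second step gives $\eta^* \geq \delta > 0$. By continuity and the minimality of $\eta^*$, one has $B_{c_1}(\eta^*)=B_{c_2}(\eta^*)=:B^*$ and $B_{c_1} > B_{c_2}$ on $(0,\eta^*)$, forcing $\dot B_{c_1}(\eta^*) \leq \dot B_{c_2}(\eta^*)$. On the other hand, the first step yields $B^* > 1-\eta^*$, so evaluating~\eqref{eq-bb} at $\eta^*$ gives
\[
\dot B_{c_1}(\eta^*)-\dot B_{c_2}(\eta^*) \;=\; \frac{(c_1^2-c_2^2)(1-\eta^*-B^*)}{g(\eta^*)\,h(B^*)\,f(\eta^*,B^*)} \;>\; 0,
\]
since both factors in the numerator are negative and the denominator is positive, contradicting the previous inequality and proving the lemma.
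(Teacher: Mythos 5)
Your proof is correct and follows essentially the same route as the paper's: the same second-order expansion $B_c(\eta)=1-\eta+c^{-2}\dot g(0)h(1)f'_\eta(0,1)\eta^2+o(\eta^2)$, obtained by the same factor-by-factor limit, to separate the two solutions near the degenerate point $\eta=0$, followed by the same first-contact comparison exploiting the sign of $(c_1^2-c_2^2)(1-\eta^*-B^*)$. The only cosmetic difference is that you re-derive the strict inequality $B_c(\eta)>1-\eta$ via an interior-minimum argument, whereas the paper reads it off from the strict negativity of $\beta'$ established in Proposition~\ref{th-beta}.
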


\begin{proof} We first of all prove that, for every $c>0$, given $B_c$ the associated solution of~\eqref{eq-bb}, then, for every $ \eta $ in a right neighbourhood of $ 0 $, 
\begin{equation}\label{eq-x2}
B_c(\eta)=1-\eta+\frac{1}{c^2}h(1)\dot{g}(0)f'_\eta(0,1)\eta^2+o(\eta^2).
\end{equation}
Consider 
\begin{equation}\label{eq-x1}
\begin{split}
\lim_{\eta\to0}\frac{B_c(\eta)-1+\eta}{\eta^2}&
=\lim_{\eta\to0}\left(-\frac{c^2(1-\eta-B_c(\eta))}{g(\eta)h(B_c(\eta))f(\eta,B_c(\eta))}\cdot
\frac{g(\eta)h(B_c(\eta))f(\eta,B_c(\eta))}{c^2\eta^2} \right)\\
&=-\frac{1}{c^2}\lim_{\eta\to0}\dot B_c(\eta)h(B_c(\eta))\frac{g(\eta)}{\eta}
\cdot\frac{f(\eta,B_c(\eta))}{\eta}. 
\end{split}\end{equation}
Using Taylor expansion, we compute 
\[\frac{f(\eta,B_c(\eta))}{\eta}=f'_\eta(0,1)+\frac{o(\sqrt{\eta^2+(B_c(\eta)-1)^2})}{\eta},\]
since $f(0,1)=0$ and $f'_\beta(0,1)=0$. From Lemma \ref{lem-dotB}, $(A_2) $ and~\eqref{eq-x1} and since $f'_\eta(0,1)\geq L_1>0$, it follows
\begin{equation} \label{ddB0}
\lim_{\eta\to0}\frac{B_c(\eta)-1+\eta}{\eta^2}=\frac{1}{c^2}h(1)\dot{g}(0)f'_\eta(0,1)>0,
\end{equation}
proving~\eqref{eq-x2}. If $0<c_1<c_2$, then ${1}/{c_1^2}>{1}/{c_2^2}$, and so we deduce that $B_{c_1}(\eta)>B_{c_2}(\eta)$ for every $\eta$ in a right neighbourhood of $0$. By contradiction, let us assume that there exists $\bar\eta\in(0,1)$ such that $B_{c_1}(\bar\eta)-B_{c_2}(\bar\eta)=0$ and $B_{c_1}(\eta)-B_{c_2}(\eta)>0$ for all $\eta\in(0,\bar\eta)$. Thus, by Propositions \ref{th-beta}, it follows
\[
\dot B_{c_1}(\bar\eta)- \dot B_{c_2}(\bar\eta)=\frac{1-\bar\eta- B_{c_1}(\bar\eta)}{g(\bar\eta)h(B_{c_1}(\bar\eta))f(\bar\eta,B_{c_1}(\bar\eta))}(c_1^2-c_2^2)>0,
\]
which is a contradiction.
\end{proof}

The following gives the connection of the set $\mathcal{C}$.
\begin{proposition} \label{prop-conn}
For all $c_1\in\mathcal{C}$ and $c>c_1$, then $c\in\mathcal{C}$.
\end{proposition}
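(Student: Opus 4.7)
The plan is to reduce the statement to a one-line consequence of the two immediately preceding lemmas: the admissibility criterion of Lemma~\ref{lem-cc}, namely $c \in \mathcal{C} \iff B_c(1)=0$, and the strict monotonicity with respect to $c$ from Lemma~\ref{lemma-c1}.

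First, I would translate the hypothesis $c_1 \in \mathcal{C}$ via Lemma~\ref{lem-cc} into the identity
\[
\lim_{\eta \to 1^-} B_{c_1}(\eta) = B_{c_1}(1) = 0.
\]
For the arbitrary $c > c_1$, Theorem~\ref{th-m} produces the semi-wavefront $(\eta,\beta)$ associated with speed $c$, which is then extended to $[0,\sigma)$ as in Section~\ref{sub-ex-p}. By Proposition~\ref{prop-3} the component $\eta$ is strictly increasing on $(-\infty,\sigma)$ and approaches $1$ as $\xi \to \sigma^-$, so the reduction performed in the paragraph preceding Lemma~\ref{lem-dotB} yields a solution $B_c$ of \eqref{eq-bb} defined on all of $(0,1)$, with $B_c(\eta)=\beta(\xi(\eta)) \geq 0$ throughout.

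Next, I would invoke Lemma~\ref{lemma-c1} with the pair $c_1 < c$ to obtain
\[
0 \leq B_c(\eta) < B_{c_1}(\eta) \qquad \text{for every } \eta \in (0,1).
\]
Passing to the limit $\eta \to 1^-$ and using the value $B_{c_1}(1)=0$ already recalled above, this forces $B_c(1)=0$. A final application of Lemma~\ref{lem-cc} then gives $c \in \mathcal{C}$, which is the desired conclusion.

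There is no substantive obstacle here: the entire technical content has been pushed into Lemmas~\ref{lem-cc} and~\ref{lemma-c1}, and the present proposition is essentially a squeeze argument between $0$ and $B_{c_1}$. The only point worth double-checking during write-up is that $B_c$ is genuinely defined up to $\eta = 1$ (so that the limit makes sense), which, as noted, is a direct consequence of the construction in Section~\ref{sub-ex-p} and Proposition~\ref{prop-3}.
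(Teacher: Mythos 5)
Your argument is correct and follows essentially the same route as the paper: translate $c_1\in\mathcal{C}$ into $B_{c_1}(1)=0$ via Lemma~\ref{lem-cc}, apply the monotonicity of Lemma~\ref{lemma-c1} to get $B_c<B_{c_1}$ on $(0,1)$, deduce $B_c(1)=0$, and conclude with Lemma~\ref{lem-cc} again. The only cosmetic difference is that you justify $B_c(1)=0$ by squeezing between $0$ and $B_{c_1}$ (using continuity of $\beta$ at $\sigma$), whereas the paper cites Corollary~\ref{cor-cond} and Remark~\ref{rem-figures} for that step; both are fine.
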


\begin{proof}
Let us fix $c_1\in\mathcal{C}$ and take $c>c_1$. Then we consider the associated solutions $B_{c_1}$ and $B_c$ of~\eqref{eq-bb}. 
By Lemma~\ref{lem-cc}, $B_{c_1}(1)=0$. Moreover, since $c>c_1>0$, by using Lemma~\ref{lemma-c1}, we have $B_{c}(\eta)<B_{c_1}(\eta)$ for every $\eta\in(0,1)$. From this condition, Corollary~\ref{cor-cond} and Remark~\ref{rem-figures}, 
we get  $B_{c}(1)= 0$. Accordingly to Lemma~\ref{lem-cc}, we obtain $c\in\mathcal{C}$.
\end{proof}

\begin{lemma}\label{lem-B1}
For every $c\in\mathcal{C}$, $\dot B_c(1)=-\infty$ if and only if $\displaystyle\lim_{\xi\to\tau^-}\beta'(\xi)=-\frac{c}{g(1)\dot h(0)}$ and $\dot B_c(1)=-1$ if and only if $\displaystyle\lim_{\xi\to\tau^-}\beta'(\xi)=0$.
\end{lemma}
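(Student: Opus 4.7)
The plan is to re-express $\dot B_c(\eta)$ in terms of $\eta'$ and $\beta'$ via the inverse function theorem, and then read off the limit at $\eta = 1$ using Corollary~\ref{cor-alt} together with Remark~\ref{rem-dotz}.

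First, since $c\in\mathcal{C}$, Corollary~\ref{cor-cond} gives $\sigma=\tau$, so $\eta(\tau)=1$ and the inverse function $\xi=\xi(\eta)$ appearing in the definition of $B_c$ satisfies $\xi(\eta)\to\tau^-$ as $\eta\to 1^-$. Because $\eta'>0$ on $(-\infty,\tau)$ by Lemma~\ref{lemma1}(ii) and Proposition~\ref{prop-3}(i), the inverse function theorem yields $\dot\xi(\eta)=1/\eta'(\xi(\eta))$. Differentiating $B_c(\eta)=\beta(\xi(\eta))$ then gives
\[
\dot B_c(\eta)=\frac{\beta'(\xi(\eta))}{\eta'(\xi(\eta))},\qquad \eta\in(0,1),
\]
so that $\dot B_c(1)=\lim_{\xi\to\tau^-}\beta'(\xi)/\eta'(\xi)$.

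Next, recall that by Lemma~\ref{lemma1bis}(ii) we have $\lim_{\xi\to\tau^-}\eta'(\xi)=0$ with $\eta'>0$ on $(-\infty,\tau)$, and by Corollary~\ref{cor-alt} the limit $\lim_{\xi\to\tau^-}\beta'(\xi)$ equals either $0$ or $-c/(g(1)\dot h(0))$. In the sharp case $\lim_{\xi\to\tau^-}\beta'(\xi)=-c/(g(1)\dot h(0))\neq 0$, the numerator $\beta'(\xi)$ converges to a strictly negative value while the denominator $\eta'(\xi)$ tends to $0$ through strictly positive values; hence $\dot B_c(1)=-\infty$.

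Finally, in the classical case $\lim_{\xi\to\tau^-}\beta'(\xi)=0$, the ratio is a $0/0$ indeterminate form, and this is the main delicate point. However, Remark~\ref{rem-dotz}, which is the output of the auxiliary analysis of $z_c$ performed in Section~\ref{sub-ex-p}, states precisely that in this regime $\lim_{\xi\to\tau^-}\eta'(\xi)/\beta'(\xi)=-1$; hence $\dot B_c(1)=-1$. Since the two regimes provided by Corollary~\ref{cor-alt} are exhaustive and mutually exclusive, the two implications above are in fact equivalences, which proves the lemma.
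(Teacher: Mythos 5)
Your proposal is correct and follows essentially the same route as the paper: both reduce $\dot B_c(1)$ to $\lim_{\xi\to\tau^-}\beta'(\xi)/\eta'(\xi)$ via the chain rule and then conclude from Remark~\ref{rem-dotz} together with the dichotomy of Corollary~\ref{cor-alt}. Your handling of the sharp case by the direct observation that $\eta'(\xi)\to 0^+$ while $\beta'(\xi)$ tends to a strictly negative limit is a minor (and welcome) elaboration of what the paper leaves implicit.
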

\begin{proof}
By the definition of $B_c(\eta)$, 
\[
\lim_{\eta\to 1} \dot B_c(\eta)=\lim_{\xi\to\tau^-}\frac{\beta'(\xi)}{\eta'(\xi)}=\lim_{\xi\to\tau^-}\frac{1}{\frac{\eta'(\xi)}{\beta'(\xi)}}.
\]
The thesis is then a straightforward consequence of Lemma \ref{lemma1} and Remark \ref{rem-dotz}.
\end{proof}

According to Corollary \ref{cor-alt}  and Lemma \ref{lem-B1}, the classification of the wavefront can be based on the value of $\dot B_c(1)$ as follows.  
\begin{corollary} \label{cor-cw} 
For every $c\in\mathcal{C}$, the wavefront is sharp if and only if $\dot B_c(1)=-\infty$ and it is classic if and only if $\dot B_c(1)=-1. $ 
\end{corollary}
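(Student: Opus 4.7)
The plan is to chain together two results already established: Lemma~\ref{lem-B1}, which translates statements about $\dot B_c(1)$ into statements about $\lim_{\xi \to \tau^-}\beta'(\xi)$, and Corollary~\ref{cor-alt} together with Remark~\ref{rem-sharp}, which translates statements about $\lim_{\xi \to \tau^-}\beta'(\xi)$ into the classification (classical versus sharp at $0$) of Definition~\ref{def-2}.

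More precisely, I would first recall from Corollary~\ref{cor-alt} that for any wavefront of~\eqref{eq-sys}--\eqref{eq-bc}, the limit $\lim_{\xi \to \tau^-}\beta'(\xi)$ exists and takes one of exactly two values: either $0$ or $-c/(g(1)\dot h(0))$. These are the only dichotomous alternatives, so by Lemma~\ref{lem-B1} the value of $\dot B_c(1)$ must be either $-1$ or $-\infty$, with no other possibility. This reduces the corollary to showing the correspondence in each of the two cases.

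Next I would argue each case separately. If $\dot B_c(1) = -1$, then by Lemma~\ref{lem-B1} we have $\lim_{\xi\to\tau^-}\beta'(\xi) = 0$. If $\tau = +\infty$ the wavefront is classical by Proposition~\ref{prop-3}~(ii) and Remark~\ref{rem-sharp}; if $\tau \in \mathbb R$, then Remark~\ref{rem-sharp} directly identifies this as the classical case (since $\beta$ extends smoothly by $0$ past $\tau$ with matching first and second derivatives, using $g(1), h(0)$, and $\beta'(\tau)=0$). Conversely, if $\dot B_c(1) = -\infty$, then by Lemma~\ref{lem-B1} the limit equals $-c/(g(1)\dot h(0)) \neq 0$, so $\tau \in \mathbb R$ (otherwise $\beta$ would not be bounded) and the profile is sharp at $0$ by Remark~\ref{rem-sharp}. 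The ``if and only if'' in the statement then follows from the exhaustive dichotomy noted above.

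I do not anticipate a real obstacle: every ingredient is already in place, and the proof is essentially a one-line citation of Lemma~\ref{lem-B1} combined with Definition~\ref{def-2} and Remark~\ref{rem-sharp}. The only minor care is to note that the case $\tau = +\infty$ automatically falls into the classical branch and is compatible with $\dot B_c(1) = -1$ via Proposition~\ref{prop-3}~(ii) and Lemma~\ref{lemma1bis}~(iii), so no wavefront is left unclassified.
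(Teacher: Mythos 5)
Your proposal is correct and follows essentially the same route as the paper, which justifies this corollary precisely by combining the dichotomy of Corollary~\ref{cor-alt} with the translation provided by Lemma~\ref{lem-B1} (and the classification of Remark~\ref{rem-sharp}). Your additional remarks on the $\tau=+\infty$ case are consistent with Lemma~\ref{lemma1bis}~$(iii)$ and add nothing that conflicts with the paper's argument.
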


\begin{lemma} \label{lem-us}
Let us suppose that there exists $ \tilde c \in \mathcal C $ such that the wavefront $ (\eta_{\tilde c},\beta_{\tilde c}) $ is sharp. Then, for every $ c \not= \tilde c, $ the wavefront $ (\eta_c, \beta_c) $ is classical. Moreover,  for every $ c > \tilde c, $ $ \tau_c = +\infty. $ 
\end{lemma}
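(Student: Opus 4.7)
The plan is to exploit the desingularized equation~\eqref{eq-bb} to extract a precise asymptotic of $B_c(\eta)^2/(1-\eta)$ as $\eta\to 1^-$ whenever the associated wavefront is sharp, and then obtain a contradiction via Lemma~\ref{lemma-c1} if two different speeds both produced a sharp profile. Fix any $c\in\mathcal{C}$ for which $(\eta_c,\beta_c)$ is sharp. By Corollary~\ref{cor-cw}, $\dot B_c(1)=-\infty$, and the mean value theorem applied to $B_c$ (which vanishes at $\eta=1$) yields $B_c(\eta)/(1-\eta)\to +\infty$, i.e.\ $(1-\eta)/B_c(\eta)\to 0$ as $\eta\to 1^-$. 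Rewriting~\eqref{eq-bb} as
\[
B_c(\eta)\,\dot B_c(\eta) \;=\; \frac{c^2\bigl((1-\eta)/B_c(\eta)-1\bigr)}{g(\eta)\,\bigl(h(B_c(\eta))/B_c(\eta)\bigr)\,\bigl(f(\eta,B_c(\eta))/B_c(\eta)\bigr)},
\]
and using that $h(r)/r\to \dot h(0)>0$ by $(A_3)$, $g(\eta)\to g(1)>0$, and $f(\eta,r)/r\to f'_\beta(1,0)=:a\in[L_1,L_2]$ as $(\eta,r)\to (1,0)$ (which follows from the sandwich $L_1 sr\le f(s,r)\le L_2 sr$ in $(A_1)$, noting that $f(\eta,0)\equiv 0$ forces $f'_\eta(1,0)=0$), I can pass to the limit to obtain $\lim_{\eta\to 1^-} B_c(\eta)\dot B_c(\eta) = -c^2/(a\,g(1)\,\dot h(0))$. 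Applying L'H\^opital's rule to $B_c(\eta)^2/(2(1-\eta))$ then yields the key identity
\[
\lim_{\eta\to 1^-}\frac{B_c(\eta)^2}{1-\eta}=\frac{2c^2}{a\,g(1)\,\dot h(0)},
\]
which is strictly increasing in $c$.

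Next, let $c\in\mathcal{C}$ with $c\neq \tilde c$ and suppose for contradiction that the $c$-wavefront is also sharp. Lemma~\ref{lemma-c1} furnishes $B_c<B_{\tilde c}$ on $(0,1)$ when $c>\tilde c$, and $B_c>B_{\tilde c}$ on $(0,1)$ when $c<\tilde c$; squaring and dividing by $1-\eta>0$ preserves these inequalities, which pass to the $\eta\to 1^-$ limit as non-strict inequalities between $2c^2/(a\,g(1)\,\dot h(0))$ and $2\tilde c^2/(a\,g(1)\,\dot h(0))$. Since $c\mapsto c^2$ is strictly increasing on $(0,+\infty)$, the explicit values of these limits contradict both possibilities. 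Hence $(\eta_c,\beta_c)$ must be classical.

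For the moreover claim, fix $c>\tilde c$. The first part guarantees that $(\eta_c,\beta_c)$ is classical, so $\dot B_c(1)=-1$ by Corollary~\ref{cor-cw}; this gives $B_c(\eta)\sim 1-\eta$ and hence $f(\eta,B_c(\eta))\sim a(1-\eta)$ as $\eta\to 1^-$. From~\eqref{eq-ode-1}, $d\xi/d\eta = c/f(\eta,B_c(\eta))$, and integrating from a reference point $\xi_0$ with $\eta_0:=\eta(\xi_0)\in(0,1)$ yields
\[
\tau_c=\xi_0+\int_{\eta_0}^{1}\frac{c}{f(\eta,B_c(\eta))}\,d\eta,
\]
whose integrand blows up like $c/(a(1-\eta))$, producing a divergent integral; therefore $\tau_c=+\infty$. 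The main obstacle will be justifying the limit manipulations cleanly, especially verifying $(1-\eta)/B_c(\eta)\to 0$ in the sharp case (which refines $\dot B_c(1)=-\infty$) and uniformly controlling the Taylor remainders of $h$ and $f$ near $(1,0)$; the $C^1$ regularity at that corner, together with the two-sided bound $L_1\le f(s,r)/(sr)\le L_2$, is exactly what keeps $a$ strictly positive and finite, making the limit strictly monotone in $c$ and thereby powering the contradiction.
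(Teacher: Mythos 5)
Your argument is correct, and it splits naturally into two halves of different character. The first half --- extracting the asymptotic $B_c(\eta)^2/(1-\eta)\to 2c^2/\bigl(f'_\beta(1,0)\,g(1)\,\dot h(0)\bigr)$ for a sharp profile and playing it against the strict ordering of Lemma~\ref{lemma-c1} --- is essentially the paper's own argument, down to the same use of L'H\^opital and the expansion of $f$ at $(1,0)$. One small caveat: the limit $f(\eta,r)/r\to f'_\beta(1,0)$ is not an unrestricted two-variable limit; what justifies it is the $C^1$ regularity of $f$ together with $f'_\eta(1,0)=0$ and the fact that $(1-\eta)/B_c(\eta)\to 0$ along the orbit, while the sandwich inequality only guarantees $f'_\beta(1,0)\in[L_1,L_2]$, hence strict positivity. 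You flag this yourself at the end, and all the needed ingredients are in place. For the ``moreover'' part your route genuinely differs from the paper's: the paper argues by contradiction, normalizing $\tau_c=\tau_{\tilde c}<+\infty$, comparing the sharp and classical profiles near the common $\tau$, and deriving $+\infty\le \tilde c L_2/(cL_1)$ from the ratio $\dot B_{\tilde c}/\dot B_c$. You instead observe that classicality gives $\dot B_c(1)=-1$, hence $B_c(\eta)\sim 1-\eta$ and $f(\eta,B_c(\eta))\le L_2 B_c(\eta)=O(1-\eta)$, so that $\sigma_c-\xi_0=\int_{\eta_0}^1 c\,\mathrm{d}\eta/f(\eta,B_c(\eta))$ diverges. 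This is more elementary, avoids the simultaneous-translation device, and in fact proves the stronger statement that \emph{every} classical wavefront reaches $(1,0)$ only at $+\infty$, independently of any comparison with $\tilde c$ (consistent with Corollary~\ref{cor-clas}). To make it airtight, invoke Proposition~\ref{prop-conn} so that the wavefront at speed $c>\tilde c$ exists, and Lemma~\ref{lemma1}~$(iii)$ or Corollary~\ref{cor-cond} to pass from $\sigma_c=+\infty$ to $\tau_c=+\infty$.
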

\begin{proof}
We divide the proof into two steps.

\noindent
\textit{Step~$1$. Uniqueness of the sharp wavefront.} Let us suppose that there exists $ c \in \mathcal C $ such that $ (\eta_{c},\beta_{c}) $ is sharp. Then $ \dot B_{c}(1)=-\infty, $ hence, by De L'Hôpital rule,
\[ \lim_{\eta \to 1} \frac {1 - \eta} {B_{c}(\eta)} = \lim_{\eta \to 1} \frac {-1} {\dot B_{c}(\eta)} = 0. \]  
Moreover, since $f$ is differentiable in $(1,0)$, 
\[
f(1,0)=0=f'_\eta(1,0)\quad\hbox{and}\quad f'_\beta(1,0) \ge L_1 >0,
\]
thus, according to Lemma \ref{lem-cc}, for every $\eta$ in a left neighbourhood of $1$, we can write
\begin{equation}\label{f10}
f(\eta,B_{c}(\eta))=f'_\beta(1,0)B_{c}(\eta)+o(\eta-1+B_{c}(\eta)).
\end{equation}
Now, by De L'Hôpital rule and \eqref{f10}, it follows that
\begin{align*} 
\lim_{\eta\to 1}\frac{B^2_c(\eta)}{1-\eta} &=\lim_{\eta\to 1}\frac{2B_c(\eta)\dot{B}_c(\eta)}{-1}= -\frac{2c^2}{g(1)}\lim_{\eta\to 1}  \left[ \left(\frac{1-\eta}{B_c(\eta)}-1\right) \frac {B_c(\eta)}{h(B_c(\eta))} \frac {B_c(\eta)} {f(\eta,B_c(\eta))} \right] =\\
&=\frac{2c^2}{g(1)\dot{h}(0)}\lim_{\eta\to 1}\frac{1}{f'_\beta(1,0)+\frac{o(\eta-1+B_c(\eta))}{\eta-1+B_c(\eta)}\left( \frac{\eta-1}{B_c(\eta)} + 1 \right)} = \frac{2c^2}{g(1)\dot{h}(0)f'_\beta(1,0)}.
\end{align*}	
Thus, for all $\eta$ in a left neighbourhood of $1$,
\begin{equation}\label{B1}
B_c(\eta)=c\sqrt{\frac{2(1-\eta)}{g(1)\dot{h}(0)f'_\beta(1,0)}}+o(\sqrt{1-\eta}).
\end{equation}
Assume now the existence of two admissible speeds $c_1,c_2\in\mathcal{C}$ such that $\dot{B}_{c_1}(1)=\dot{B}_{c_2}(1)=-\infty$. Then, if $c_1<c_2$, by \eqref{B1},
\[
B_{c_1}(\eta)-B_{c_2}(\eta)<0
\]
for every $\eta$ in a left neighbourhood of~$1$. This, however, contradicts Lemma~\ref{lemma-c1}. Therefore, there exists at most a unique ${\tilde c}\in\mathcal{C}$ such that $ (\eta_{\tilde c},\beta_{\tilde c}) $ is sharp. 

\noindent
\textit{Step~$2$. For every $c>\tilde c$, $\tau_c=+\infty$.} 
Assume $c>\tilde c$. In this case, by the previous step, the wavefront $(\eta_c,\beta_c)$ associated with the speed $c$ is classical. Assume by contradiction, that $\tau_c<+\infty$. Then, by taking a suitable translation of $(\eta_c,\beta_c)$ and $(\eta_{\tilde c},\beta_{\tilde c})$, we can assume that $\tau:=\tau_c=\tau_{\tilde c}<+\infty$. First of all, by Corollary \ref{cor-alt} and by the L'Hôpital rule, 
\[
+\infty=\lim_{\xi\to \tau^-}\frac{\beta_{\tilde c}'(\xi)}{\beta_{c}'(\xi)}=\lim_{\xi\to \tau^-}\frac{\beta_{\tilde c}(\xi)}{\beta_{c}(\xi)}.
\]
From Lemma~\ref{lem-B1} and $(A_1)$, we have
\[\begin{split}
+\infty&=\lim_{\eta \to 1}\frac{\dot B_{\tilde c}(\eta)}{\dot B_{c}(\eta)}=\lim_{\xi \to \tau^-}\frac{\beta'_{\tilde c}(\xi)}{\eta'_{\tilde c}(\xi)}\frac{\eta'_{c}(\xi)}{\beta'_{c}(\xi)}=\lim_{\xi \to \tau^-}\frac{\beta_{\tilde c}(\xi)}{\beta_{c}(\xi)}\frac{\eta'_{c}(\xi)}{\eta'_{\tilde c}(\xi)}\\
&=\lim_{\xi \to \tau^-}\frac{\beta_{\tilde c}(\xi)}{\beta_{c}(\xi)}\frac{\tilde cf(\eta_{c}(\xi),\beta_{c}(\xi))}{cf(\eta_{\tilde c}(\xi),\beta_{\tilde c}(\xi))} = \frac {\tilde c} {c}  \lim_{\xi \to \tau^-}\frac{\beta_{\tilde c}(\xi)}{f(\eta_{\tilde c}(\xi),\beta_{\tilde c}(\xi))}\frac{f(\eta_{c}(\xi),\beta_{c}(\xi))} {\beta_{c}(\xi)}\leq\frac{\tilde c L_2}{c L_1},
\end{split}
\]
which is a contradiction, showing that $\tau_c=+\infty$. This ends the proof. 
\end{proof}

Now we aim to prove that the set of admissible speeds is closed. First of all, we define
\begin{equation}\label{eq-ccc}
c_0:=\inf \mathcal{C},
\end{equation}
and observe that, thanks to Remark~\ref{rem41}, $c_0$ is a positive real number and $c_{\sharp}\leq c_0\leq c_*.$

The following result says that the set $\mathcal{C}=[c_0,+\infty)$.
\begin{proposition} \label{prop-c0}
$c_0\in\mathcal{C}$ and $c_\sharp\leq c_0\leq c_*$. 
\end{proposition}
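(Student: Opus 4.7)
The bounds $c_\sharp\leq c_0\leq c_*$ are immediate from Remark~\ref{rem41}: Proposition~\ref{prop28} gives $\mathcal{C}\subseteq[c_\sharp,+\infty)$, so that $c_0=\inf\mathcal{C}\geq c_\sharp$, while Remark~\ref{rem-star} shows $(c_*,+\infty)\subseteq\mathcal{C}$, so that $c_0\leq c_*$. So the only substantive content is the closedness claim $c_0\in\mathcal{C}$.

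The plan is to argue by contradiction via the reduced first-order problem~\eqref{eq-bb}. Fix a decreasing sequence $c_n\searrow c_0$ with $c_n\in\mathcal{C}$ and let $B_{c_n}$ denote the associated solutions of~\eqref{eq-bb}; by Lemma~\ref{lem-cc} one has $B_{c_n}(1)=0$ for every $n$. Let $B_{c_0}$ be the solution of~\eqref{eq-bb} corresponding to $c_0$, built from the extended semi-wavefront of Section~\ref{sub-ex-p}. Assuming for contradiction that $c_0\notin\mathcal{C}$, Lemma~\ref{lem-cc} forces $b:=B_{c_0}(1)>0$.

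The key observation is that, under this assumption, the right-hand side of~\eqref{eq-bb} is non-singular at $\eta=1$: by $(A_1)$--$(A_3)$ the three factors $g(1)$, $h(b)$ and $f(1,b)$ are all strictly positive. Hence $B_{c_0}$ prolongs as a $C^1$ solution of~\eqref{eq-bb} on some interval $[1/2,1+\varepsilon]$ on which $B_{c_0}\geq b/2$. To transfer this behavior to the $B_{c_n}$, one would normalize each semi-wavefront so that $\eta_{c_n}(0)=1/2$; then $B_{c_n}(1/2)=\beta_{c_n}(0)$, and Proposition~\ref{prop-conv} yields $B_{c_n}(1/2)\to B_{c_0}(1/2)$. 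On the compact domain $[1/2,1+\varepsilon]\times[b/4,1]$ the vector field of~\eqref{eq-bb} is Lipschitz in $B$ and continuous in the parameter $c$, so standard continuous dependence on initial data and parameters implies that, for all large $n$, $B_{c_n}$ extends to the whole interval $[1/2,1+\varepsilon]$ and converges uniformly to $B_{c_0}$ there. In particular $B_{c_n}(1)\to b>0$, contradicting $B_{c_n}(1)=0$.

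The step I expect to require the most care is the bridging of convergence at the interior point $\eta=1/2$: the singular boundary condition of~\eqref{eq-bb} at $\eta=0$ (captured only by the slope $\dot B_c(0)=-1$ from Lemma~\ref{lem-dotB}) does not directly support standard continuous dependence, so Proposition~\ref{prop-conv} must be invoked explicitly to upgrade the singular boundary behavior into genuine $C^0$ convergence at $\xi=0$, whence at $\eta=1/2$. Once this matching is in place, the closing contradiction is a routine application of ODE continuous dependence on a compact interval on which the reduced equation~\eqref{eq-bb} is non-degenerate precisely because of the (false) hypothesis $B_{c_0}(1)>0$ we are aiming to rule out.
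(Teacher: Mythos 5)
Your argument is correct, but it takes a genuinely different route from the paper's. The paper argues directly: for a decreasing sequence $c_n\searrow c_0$ with $c_n\in\mathcal{C}$ it proves that $\{B_{c_n}\}_n$ \emph{and} $\{\dot B_{c_n}\}_n$ are equibounded on all of $[0,1]$ --- which requires the second-order expansions of $B_c$ at the degenerate corners $\eta=0$ and $\eta=1$ (cf.~\eqref{ddB0} and its analogue at $\eta=1$) --- then extracts a uniform limit $Y$ by Ascoli--Arzel\`a, identifies $Y=B_{c_0}$ on $[0,1)$ via the integral form of~\eqref{eq-bb}, the matching $B_{c_n}(1/2)=\beta_{c_n}(0)\to\beta_{c_0}(0)$ from Proposition~\ref{prop-conv}, and uniqueness of the Cauchy problem, and finally gets $B_{c_0}(1)=Y(1)=\lim_n B_{c_n}(1)=0$. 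You instead argue by contradiction and only need continuous dependence on the rectangle $[1/2,1]\times[b/4,1]$, where the (to-be-refuted) hypothesis $b=B_{c_0}(1)>0$, together with the monotonicity of $B_{c_0}$, makes the right-hand side of~\eqref{eq-bb} uniformly Lipschitz in $B$ and continuous in $c$; this bypasses entirely the delicate gradient estimates near $(0,1)$ and $(1,0)$ and is in that respect more economical, while both routes share the essential bridging step via Proposition~\ref{prop-conv} that you correctly single out. Two points to tighten: (a) the extension to $[1/2,1+\varepsilon]$ is unnecessary and slightly delicate, since for $\eta>1$ the functions $f$ and $g$ are only continuous extensions and need not inherit the positivity and $C^1$ properties of $(A_1)$--$(A_2)$; the contradiction already closes at $\eta=1$, where confinement gives $B_{c_n}(1)\geq b/4$ against $B_{c_n}(1)=0$. (b) ``Continuous dependence implies uniform convergence on $[1/2,1]$'' hides the standard exit-time step: one must first use the Gronwall bound to show that, for $n$ large, $B_{c_n}$ cannot leave $[b/4,1]$ before $\eta=1$ (at a first exit point it would be at distance at least $3b/4$ from $B_{c_0}\geq b$, contradicting a bound smaller than $b/2$); only then does the estimate hold up to $\eta=1$. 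Neither point invalidates the approach. Note finally that the paper's heavier direct argument pays off later: essentially the same compactness machinery is reused in Proposition~\ref{prop-B1inf}, whereas your contradiction argument does not by itself deliver the uniform convergence $B_{c_n}\to B_{c_0}$ on $[0,1]$ exploited there.
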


\begin{proof}
Consider the solution $B_{c_0}$ to equation~\eqref{eq-bb} when $c=c_0$. We will prove that $B_{c_0}(1)=0.$ Consequently, using Lemma~\ref{lem-cc},   $c_0\in\mathcal{C}$.

Let us take a decreasing sequence $\{c_n\}_n$  converging to $c_0$ and consider, for every $n$, the solution $B_{c_n}$ of~\eqref{eq-bb} corresponding to $ c=c_n. $ Then $ B_{c_n}(0)= 1 $ and $ \dot B_{c_n}(0)=-1, $ by Lemma \ref{lem-dotB}. According to~\eqref{eq-ccc} and Lemma~\ref{lem-cc}, for every $n$, $ c_n \in \mathcal C, $ and $ B_{c_n}(1)=0. $ Moreover, by Lemma \ref{lem-us} and Corollary \ref{cor-cw}, without loss of generality we can suppose that $ \dot B_{c_n}(1)=-1 $ for every $ n. $ Finally, taking a suitable translation, we can suppose that the wavefront $ (\eta_{c_n},\beta_{c_n}) $ satisfies $ \eta_{c_n}(0)=\frac 1 2 $ for every $ n. $

For every $\eta \in [0,1] $ and $ n, $ we have 
\[0 \leq B_{c_n}(\eta)\leq 1,
\]
and so $\{B_{c_n}\}_n$ is equibounded in $[0,1] $.

Let us now prove that $ \{ \dot B_{c_n} \}_n $ is equibounded too. Notice that $ \dot{B}_{c_n}(\eta) \leq 0 $ for every $ n$ and $\eta \in [0,1]. $ Suppose by contradiction that there exists a sequence $ \{ \eta_n \}_n $ such that $ \lim_{n \to +\infty} \dot B_{c_n}(\eta_n) = - \infty. $ Since $ \{\eta_n \}_n \subset [0,1] $, there exists a subsequence, still denoted as the sequence, and $ \overline {\eta} \in [0,1], $ such that $ \lim_{n \to +\infty} \eta_n = \overline {\eta}. $ Assume firstly that $ \overline {\eta} \in (0,1). $ Then, without loss of generality, we can find $ \epsilon > 0 $ such that $ \eta_n \in [\overline {\eta} - \epsilon, \overline {\eta} + \epsilon] \subset(0,1)$ for every $ n. $ Then, since $\{c_n\}_n$  is a decreasing sequence and $ B_{c_n} $ is a decreasing function, by Lemma \ref{lemma-c1} we deduce 
\[\begin{split}
\dot{B}_{c_n}(\eta_n)=& \frac{-c_n^2(B_{c_n}(\eta_n) + \eta_n - 1)}{g(\eta_n)h(B_{c_n}(\eta_n))f(\eta_n,B_{c_n}(\eta_n))}\\
\geq & \frac{-c_1^2}{L_1 (\overline {\eta} - \epsilon) B_{c_1}(\overline {\eta} + \epsilon) (\min_{[\overline{\eta} - \epsilon,\overline{\eta} + \epsilon ]}g) (\min_{[B_{c_1}(\overline{\eta} + \epsilon),1]}h)}, 
\end{split}\]
which contradicts $ \lim_{n \to +\infty} \dot B_{c_n}(\eta_n) = - \infty. $ Assume now that $ \overline {\eta} =0. $ Then, by the mean value theorem and \eqref{ddB0}, recalling that $\dot B_{c_n}(0) = -1, $ for every $ n $ there exists $ x_n \in (0,\eta_n) $ such that 
\[ \dot B_{c_n} (\eta_n)= \ddot B_{c_n}(x_n) \eta_n + \dot B_{c_n}(0) = \left [\frac{2}{c_n^2}h(1)\dot{g}(0)f'_\eta(0,1) + o_n(1)\right]  \eta_n -1, \] 
which again contradicts $ \lim_{n \to +\infty} \dot B_{c_n}(\eta_n) = - \infty. $ Finally, assuming that  $ \overline {\eta} =1 $ with a similar reasoning we get, for every $ n , $  
\[ \dot B_{c_n} (\eta_n)= \left [-\frac{2}{c_n^2}g(1)\dot{h}(0)f'_\beta(1,0) + o_n(1)\right] (\eta_n -1) -1, \] 
hence again a contradiction. So $\{\dot B_{c_n}\}_n$ is equibounded in $[0,1]$. Hence, $\{B_{c_n}\}_n$ is equicontinuous in $[0,1]$ and by the Ascoli-Arzel\`a theorem there is a subsequence, that we still denote by $\{B_{c_n}\}_n$, converging uniformly on $[0,1]$ to a function $Y\in C[0,1]$. Since we have, for every $ \eta, \hat{\eta} \in (0,1), $ with $ \hat{\eta}<\eta$,
\[
B_{c_n}(\eta)=B_{c_n}(\hat{\eta})+\int_{\hat{\eta}}^\eta \dot B_{c_n}(s)\,\mathrm{d}s =B_{c_n}(\hat{\eta})+\int_{\hat{\eta}}^\eta \frac{-c_n^2(1-s-B_{c_n}(s))}{g(s)h(B_{c_n}(s))f(s,B_{c_n}(s))} \,\mathrm{d}s,
\]
by passing to the limit as $n\to+\infty,$
\[
Y(\eta)=Y(\hat{\eta})+\int_{\hat{\eta}}^\eta \frac{-c_0^2(1-s-Y(s))}{g(s)h(Y(s))f(s,Y(s))} \,\mathrm{d}s.
\] 
Moreover, by Proposition \ref{prop-conv}, 
\[ \eta_{c_0} (0) = \lim_{n \to +\infty}   \eta_{c_n} (0) = \frac 1 2 \]
and
\[ \beta_{c_0} (0) = \lim_{n \to +\infty}   \beta_{c_n} (0). \]
Therefore
\begin{align*}
B_{c_0}(1/2) = & \beta_{c_0}(\xi_{c_0}(1/2)) = \beta_{c_0}(0) \\
= & \lim_{n \to + \infty} \beta_{c_n}(0)= \lim_{n \to +\infty}\beta_{c_n}(\xi_{c_n}(1/2)) = \lim_{n \to + \infty} B_{c_n}(1/2) = Y(1/ 2), 
\end{align*}
thus we obtain that $Y$ is the solution of 
\[\begin{cases}
\dot{Y}(\eta)=\dfrac{-c_0^2(1-\eta-Y(\eta))}{g(\eta)h(Y(\eta))f(\eta,Y(\eta))},\\[10pt]
Y({1}/{2})=B_{c_0}({1}/{2}).
\end{cases}\]
By the uniqueness of the solution of the Cauchy problem, it follows that $Y(\eta)=B_{c_0}(\eta)$, for all $\eta\in[0,1)$. Moreover 
\[ 
\lim_{\eta \to 1^-} B_{c_0} (\eta) = \lim_{ \eta \to 1^-} Y(\eta) = \lim_{\eta \to 1^-} \lim_{n \to + \infty} B_{c_n}(\eta) =  \lim_{n \to + \infty} B_{c_n}(1)=0. \]
For the estimates, we exploit Proposition~\ref{prop28}, Theorem~\ref{th-positive}, and Remarks~\ref{rem-star} and~\ref{rem41} to conclude the proof.
\end{proof}

\begin{proposition}\label{prop-B1inf}
$\dot B_{c_0}(1)=-\infty.$
\end{proposition}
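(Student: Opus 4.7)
The plan is to argue by contradiction. By Corollary~\ref{cor-cw}, the only possible values of $\dot B_{c_0}(1)$ are $-1$ and $-\infty$, so I would suppose for contradiction that $\dot B_{c_0}(1) = -1$, i.e., that the wavefront at $c_0$ is classical, and then produce some $c < c_0$ with $c \in \mathcal{C}$, which contradicts the definition $c_0 = \inf \mathcal{C}$.

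The first preparatory step is to derive a refined asymptotic expansion for $B_{c_0}$ near $\eta = 1$, analogous to the expansion \eqref{eq-x2} obtained near $\eta = 0$ in the proof of Lemma~\ref{lemma-c1}. Using the ODE \eqref{eq-bb} together with $f(1,0) = 0$, $f'_\eta(1,0) = 0$ (since $f$ vanishes identically on $\{\beta = 0\}$), $f'_\beta(1,0) \in [L_1, L_2]$, and the regularity of $g, h$, a double application of De L'H\^opital to $(B_{c_0}(\eta)-(1-\eta))/(1-\eta)^2$ should yield
\[
B_{c_0}(\eta) = (1-\eta) + \frac{A}{c_0^2}(1-\eta)^2 + o\bigl((1-\eta)^2\bigr) \quad \text{as } \eta \to 1^-,
\]
where $A := g(1)\,\dot h(0)\,f'_\beta(1,0) > 0$.

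Next, I would choose a sequence $c_n \uparrow c_0$ with $c_n < c_0$. By minimality of $c_0$, each $c_n \notin \mathcal C$, so Lemma~\ref{lem-cc} gives $L_n := B_{c_n}(1) > 0$. The strict monotonicity $B_{c_n} > B_{c_0}$ on $(0,1)$ from Lemma~\ref{lemma-c1}, combined with continuous dependence on $c$ of the Cauchy problem \eqref{eq-bb} (in the spirit of Proposition~\ref{prop-conv}, extended to compact subsets of $[0,1)$), shows that $B_{c_n} \to B_{c_0}$ uniformly on compact subsets of $[0,1)$. Since each $B_{c_n}$ is decreasing in $\eta$, $L_n \leq B_{c_n}(\eta)$ for every $\eta < 1$, whence $L_n \to 0^+$. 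Evaluating the ODE at $\eta = 1$ and using $h(L_n) \sim \dot h(0) L_n$ and $f(1,L_n) \sim f'_\beta(1,0) L_n$ then gives
\[
\dot B_{c_n}(1) \;=\; \frac{-c_n^2 L_n}{g(1)\,h(L_n)\,f(1,L_n)} \;\sim\; -\frac{c_n^2}{A L_n} \;\longrightarrow\; -\infty.
\]

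The hard part is extracting a contradiction from $L_n \to 0^+$ together with the classical asymptotic of $B_{c_0}$. My plan is a barrier/comparison argument exploiting the strict inequality $c_n^2 < c_0^2$. Using the refined asymptotic of Step~1, I would construct a supersolution $\overline B_n$ of \eqref{eq-bb} at speed $c_n$ on $[0,1]$ with $\overline B_n(0) \geq 1$ and $\overline B_n(1) = 0$, obtained by matching, near $\eta = 1$, an ansatz of the form $(1-\eta) + a_n(1-\eta)^2$ with $a_n$ taken slightly larger than $A/c_n^2 > A/c_0^2$, to $B_{c_0}$ away from $\eta = 1$. Comparison would then force $B_{c_n} \leq \overline B_n$ on $[0,1]$, and in particular $L_n = B_{c_n}(1) \leq \overline B_n(1) = 0$, contradicting $L_n > 0$. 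The delicate point is verifying the supersolution inequality through the boundary layer of width $\sim L_n^2/c_n^2$ near $\eta = 1$, where $B_{c_n}$ transitions from $\sim 1-\eta$ to $\sim L_n$; this may require a matched-asymptotic decomposition, or alternatively a reformulation through the variable $z_c$ from Section~\ref{sub-ex-p} (for which the classical case corresponds to $z_{c_0}(0) = \dot z_{c_0}(0) = 0$, and where the comparison with the barrier $w_{\rho_*}$ from Theorem~\ref{th-fkpp} is more natural).
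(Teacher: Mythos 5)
Your Steps 1 and 2 are sound and in fact reproduce two of the paper's own ingredients: the expansion $B_{c_0}(\eta)=(1-\eta)+\tfrac{A}{c_0^2}(1-\eta)^2+o((1-\eta)^2)$ with $A=g(1)\dot h(0)f'_\beta(1,0)$ is the classical-case analogue of~\eqref{B1}, obtained from~\eqref{f10} exactly as in Lemma~\ref{lem-us} and~\eqref{ddB0}, while the computation $\dot B_{c_n}(1)=-c_n^2L_n/\bigl(g(1)h(L_n)f(1,L_n)\bigr)\to-\infty$ is precisely the final display of the paper's proof. The genuine gap is Step 3, which you yourself flag as unfinished, and the construction you sketch there cannot work as stated: an upper barrier $\overline B_n$ at speed $c_n$ with $\overline B_n(0)\geq 1$ and $B_{c_n}\leq\overline B_n$ on all of $[0,1]$ cannot be ``matched to $B_{c_0}$ away from $\eta=1$'', because Lemma~\ref{lemma-c1} gives $B_{c_n}>B_{c_0}$ strictly on $(0,1)$, and moreover, writing $F_c(\eta,B)$ for the right-hand side of~\eqref{eq-bb}, one has $\dot B_{c_0}=\tfrac{c_0^2}{c_n^2}F_{c_n}(\eta,B_{c_0})<F_{c_n}(\eta,B_{c_0})$, so $B_{c_0}$ is a strict \emph{sub}solution of the speed-$c_n$ equation --- the wrong direction for a supersolution. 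Your worry about the boundary layer of $B_{c_n}$ is also misplaced: the supersolution inequality is checked along $\overline B_n$ itself and never sees $B_{c_n}$; the real issue is where and how the comparison is initialized.

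The barrier idea can be salvaged in a purely local form: fix $a>A/c_1^2\geq A/c_n^2$ for all $n$, verify that $\overline B(\eta)=(1-\eta)+a(1-\eta)^2$ is a strict supersolution at every speed $c_n$ on some $[1-\epsilon,1)$ with $\epsilon$ independent of $n$ (the inequality reduces to $c_n^2a/A>1$ up to $o(1)$ terms as $\eta\to1$), use Step 1 and $a>A/c_0^2$ to get $B_{c_0}(1-\epsilon)<\overline B(1-\epsilon)$ for small $\epsilon$, then use $B_{c_n}(1-\epsilon)\to B_{c_0}(1-\epsilon)$ to initialize the comparison at $1-\epsilon$ for large $n$ and conclude $L_n\leq\overline B(1)=0$, a contradiction. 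Note that even this repaired version still needs the identification $B_{c_n}\to B_{c_0}$ pointwise on $[0,1)$, which is nontrivial because of the singularity of~\eqref{eq-bb} at $\eta=0$ (the paper obtains it from the monotonicity of Lemma~\ref{lemma-c1} together with Proposition~\ref{prop-conv}). The paper's own closing step avoids barriers altogether: the monotone sequence $B_{c_n}$ is shown to converge to $B_{c_0}$ on all of $[0,1]$ including $\eta=1$, Dini's theorem upgrades this to uniform convergence and hence equicontinuity, and the uniform Lipschitz control near $\eta=1$ coming from $\dot B_{c_0}(1)=-1$ is then contradicted directly by $\dot B_{c_n}(1)\to-\infty$.
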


\begin{proof}
Let us take an increasing sequence $\{c_n\}_n$ converging to $c_0$, and consider, for every $n$, the solution $B_{c_n}$ of~\eqref{eq-bb} corresponding to $ c=c_n. $ Then $ B_{c_n}(0)= 1 $ and $ \dot B_{c_n}(0)=-1, $ by Lemma \ref{lem-dotB}. According to Proposition \ref{prop-conn} and Lemma \ref{lem-cc}, for every $n$, $ c_n  \not\in\mathcal{C}$, i.e. $ B_{c_n}(1)>0.$

Since $\{c_n\}_n$ is increasing, by Lemma~\ref{lemma-c1}, we deduce that $\{B_{c_n}\}_n$ is a decreasing sequence of functions in $C[0,1]$. Let $Z$ be its limit as $n\to+\infty.$ Consider $\hat{\eta},\eta\in(0,1)$ with $\hat{\eta}<\eta$. For all $s\in[\hat{\eta},\eta]$, we have
\[0\geq \dot{B}_{c_n}(s)= \frac{-c_n^2(B_{c_n}(s) + s - 1)}{g(s)h(B_{c_n}(s))f(s,B_{c_n}(s))}\geq\frac{-c_0^2}{L_1 \hat{\eta} B_{c_0}(\eta) (\min_{[\hat{\eta},\eta ]}g) (\min_{[B_{c_0}(\eta),1]}h)}.
\]
Thus, by arguing as in proof of Proposition~\ref{prop-c0}, we have that $Z$ is the solution of 
\[\begin{cases}
\dot{Z}(\eta)=\dfrac{-c_0^2(1-\eta-Z(\eta))}{g(\eta)h(Z(\eta))f(\eta,Z(\eta))},\\[10pt]
Z({1}/{2})=B_{c_0}({1}/{2}).
\end{cases}\]
Since $B_{c_n}(0)=1$ for every $n$, by the uniqueness of the solution of the Cauchy problem, it follows that $Z(\eta)=B_{c_0}(\eta)$ for every $\eta\in[0,1)$. Next, we claim that $Z(1)=0$. Therefore, let us assume by contradiction that $\lim_{n\to+\infty}B_{c_n}(1)=\theta>0.$ Since $\{B_{c_n}\}_n$ is a decreasing sequence of decreasing functions, for every $n$ and $\eta\in[0,1)$, $B_{c_n}(\eta)>B_{c_n}(1)>\theta.$ Thus, by passing to the limit as $n\to+\infty$, we obtain $B_{c_0}(\eta)\geq\theta$ for every $\eta\in[0,1)$, which is a contradiction because $B_{c_0}(1)=0$. We thus conclude that $\lim_{n\to+\infty}B_{c_n}(1)=0$, so that $Z(1)=0$ implying $Z(\eta)=B_{c_0}(\eta)$ for every $\eta\in[0,1].$ It follows that the decreasing sequence $\{B_{c_n}\}_n$ converges to the continuous function $B_{c_0}$ and so, by the Dini's theorem, the convergence is uniform in $C[0,1]$. In particular, $\{B_{c_n}\}_n$ is equicontinuous in $[0,1]$, i.e. denoting its modulus of equicontinuity by $\omega\colon[0,+\infty)\to[0,+\infty)$, for every $\delta\geq0$ and for every $n$, if $\eta_1,\eta_2\in[0,1]$ and $|\eta_1-\eta_2|\leq\delta$, then 
\begin{equation}\label{eq-equicon}
\left| B_{c_n}(\eta_1)-B_{c_n}(\eta_2)	\right|\leq \omega(\delta).
\end{equation}
Thus, by passing to the limit as $n\to+\infty$, we also have
\[
\left| B_{c_0}(\eta_1)-B_{c_0}(\eta_2)	\right|\leq \omega(\delta).
\]
Let us suppose by contradiction that $\dot B_{c_0}(1)=-1$. It follows that $B_{c_0}$ is of class $C^1$ in $[0,1]$, and so there exists $D>0$ such that $|\dot{B}_{c_0}(\eta)|\leq D$ for all $\eta\in[0,1]$ yielding $\omega(\delta)\leq D\delta$ for every $\delta<1.$ From~\eqref{eq-equicon}, for every $\delta<1,$ it thus follows
\[
\frac{\left| B_{c_n}(1-\delta)-B_{c_n}(1)	\right|}{\delta}\leq D.
\]
By passing to the limit as $\delta\to0^+$, we obtain $|\dot{B}_{c_n}(1)|\leq D$ for all $n$. On the other hand, since $\lim_{n\to+\infty}B_{c_n}(1)=0$, we have 
\[
\lim_{n\to+\infty}\dot{B}_{c_n}(1)=\lim_{n\to+\infty}\frac{-c_n^2 B_{c_n}(1)}{g(1)h(B_{c_n}(1))f(1,B_{c_n}(1))}=\lim_{n\to+\infty}\frac{-c_n^2}{g(1)\frac{h(B_{c_n}(1))}{B_{c_n}(1)}f(1,B_{c_n}(1))}=-\infty,
\]
which is a contradiction, and the thesis follows.
\end{proof}

\begin{corollary}\label{cor-clas}
The following hold:
\begin{enumerate}[nosep,wide=0pt,  labelwidth=20pt, align=left]
\item[$(i)$] if $c=c_0$, then $\tau<+\infty$ and $(\eta,\beta)$ is sharp;
\item[$(ii)$] if $c>c_0$, then $\tau=+\infty$ and $(\eta,\beta)$ is classical.
\end{enumerate}
\end{corollary}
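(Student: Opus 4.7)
The plan is to read off both parts from the two results immediately preceding the corollary, once we notice that $c_0$ itself supplies the $\tilde c$ needed to invoke Lemma~\ref{lem-us}.

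For part $(i)$, I would start from Proposition~\ref{prop-B1inf}, which gives $\dot B_{c_0}(1)=-\infty$. Since $c_0\in\mathcal{C}$ by Proposition~\ref{prop-c0}, Corollary~\ref{cor-cw} then immediately yields that the wavefront $(\eta,\beta)$ associated with $c=c_0$ is sharp. By Definition~\ref{def-2} (and the notation fixed in~\eqref{eq-tau}), sharpness at $0$ means precisely that there is a finite $\xi_\ell=\tau\in\mathbb{R}$ at which $\beta$ vanishes and fails to be differentiable, so in particular $\tau<+\infty$.

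For part $(ii)$, having just established that $\tilde c:=c_0$ is an admissible speed whose wavefront is sharp, I would apply Lemma~\ref{lem-us} with this choice of $\tilde c$. The lemma's conclusion gives two things for any $c\neq c_0$: the wavefront $(\eta_c,\beta_c)$ is classical, and if moreover $c>c_0=\tilde c$ then $\tau_c=+\infty$. This is exactly the statement of $(ii)$.

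There is essentially no obstacle here beyond checking that the hypothesis of Lemma~\ref{lem-us} is fulfilled, which is precisely what Proposition~\ref{prop-B1inf} combined with Corollary~\ref{cor-cw} provides; in particular, the uniqueness of the sharp wavefront proved in \emph{Step~1} of Lemma~\ref{lem-us} ensures that $c_0$ is the only admissible speed at which sharpness can occur, so the dichotomy sharp/classical lines up perfectly with the dichotomy $c=c_0$ versus $c>c_0$.
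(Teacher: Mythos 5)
Your proposal is correct and follows essentially the same route as the paper, whose proof is simply a citation of Propositions~\ref{prop-c0}, \ref{prop-B1inf}, \ref{prop-conn}, Corollary~\ref{cor-cw}, and Lemmas~\ref{lem-B1} and~\ref{lem-us}; you assemble exactly these ingredients, taking $\tilde c=c_0$ in Lemma~\ref{lem-us}. The only implicit point worth making explicit is that for $c>c_0$ the existence of the wavefront (so that Lemma~\ref{lem-us} has something to apply to) comes from Proposition~\ref{prop-conn}, which the paper cites and you use tacitly.
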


\begin{proof}
To prove the thesis, we exploit Propositions \ref{prop-c0}, ~\ref{prop-B1inf} and \ref{prop-conn}, Corollary \ref{cor-cw}, and Lemmas~\ref{lem-B1} and \ref{lem-us}. 
\end{proof}

\noindent

{
\bibliographystyle{elsart-num-sort}
\bibliography{biblio_MHST}

\begin{thebibliography}{10}
\expandafter\ifx\csname url\endcsname\relax
  \def\url#1{\texttt{#1}}\fi
\expandafter\ifx\csname urlprefix\endcsname\relax\def\urlprefix{URL }\fi

\bibitem{AiHuang-07}
S.~Ai, W.~Huang, Travelling wavefronts in combustion and chemical reaction
  models, Proceedings of the Royal Society of Edinburgh Section A: Mathematics
  137~(4) (2007) 671 – 700.

\bibitem{ArWe-78}
D.~G. Aronson, H.~F. Weinberger, Multidimensional nonlinear diffusion arising
  in population genetics, Adv. in Math. 30~(1) (1978) 33--76.

\bibitem{BaDe-07}
J.~W. Barrett, K.~Deckelnick, Existence, uniqueness and approximation of a
  doubly-degenerate nonlinear parabolic system modelling bacterial evolution,
  Mathematical Models and Methods in Applied Sciences 17~(7) (2007) 1095 –
  1127.

\bibitem{BCL-00}
E.~Ben-Jacob, I.~Cohen, H.~Levine, Cooperative self-organization of
  microorganisms, Advances in Physics 49~(4) (2000) 395 – 554.

\bibitem{BNS-85}
H.~Berestycki, B.~Nicolaenko, B.~Scheurer, Traveling wave solutions to
  combustion models and their singular limits, SIAM J. Math. Anal. 16~(6)
  (1985) 1207--1242.

\bibitem{Coall-21}
C.~Colson, F.~S\'{a}nchez-Gardu\~{n}o, H.~M. Byrne, P.~K. Maini, T.~Lorenzi,
  Travelling-wave analysis of a model of tumour invasion with degenerate,
  cross-dependent diffusion, Proc. A. 477~(2256) (2021) Paper No. 20210593, 22.

\bibitem{Fi-79}
P.~C. Fife, Mathematical aspects of reacting and diffusing systems.,
  Springer-Verlag, Berlin-New York, 1979.

\bibitem{GaMa-22}
T.~Gallay, C.~Mascia, Propagation fronts in a simplified model of tumor growth
  with degenerate cross-dependent self-diffusivity, Nonlinear Anal. Real World
  Appl. 63 (2022) Paper No. 103387, 28.

\bibitem{GuHo-83}
J.~Guckenheimer, P.~Holmes, Nonlinear oscillations, dynamical systems, and
  bifurcations of vector fields, vol.~42 of Applied Mathematical Sciences,
  Springer-Verlag, New York, 1983.

\bibitem{HaKo-91}
J.~K. Hale, H.~Ko\c{c}ak, Dynamics and bifurcations, vol.~3 of Texts in Applied
  Mathematics, Springer-Verlag, New York, 1991.

\bibitem{Ha-04}
D.~Hartmann, Pattern formation in cultures of bacillus subtilis, Journal of
  Biological Systems 12~(02) (2004) 179--199.

\bibitem{KMUS-97}
K.~Kawasaki, M.~Matsushita, T.~Umeda, N.~Shigesada, Modeling spatio-temporal
  patterns generated by bacillus subtilis, J. Theor. Biol. 188 (1997) 177--185.

\bibitem{Lo-97}
E.~Logak, Mathematical analysis of a condensed phase combustion model without
  ignition temperature, Nonlinear Anal. 28~(1) (1997) 1--38.

\bibitem{LoLo-96}
E.~Logak, V.~Loubeau, Travelling wave solutions to a condensed phase combustion
  model, Asymptotic Anal. 12~(4) (1996) 259--294.

\bibitem{MaMa-03}
L.~Malaguti, C.~Marcelli, Sharp profiles in degenerate and doubly degenerate
  {F}isher-{KPP} equations, J. Differential Equations 195~(2) (2003) 471--496.

\bibitem{MS-24}
L.~Malaguti, E.~Sovrano, Wavefronts for a degenerate reaction-diffusion system
  with application to bacterial growth models, preprint (2024).

\bibitem{Ma-85}
M.~Marion, Qualitative properties of a nonlinear system for laminar flames
  without ignition temperature, Nonlinear Anal. 9~(11) (1985) 1269--1292.

\bibitem{MSM-00}
M.~Mimura, H.~Sakaguchi, M.~Matsushita, Reaction–diffusion modelling of
  bacterial colony patterns, Phys. A 282~(1) (2000) 283--303.

\bibitem{Mitra-23}
K.~Mitra, J.~Hughes, S.~Sonner, H.~Eberl, J.~Dockery, Travelling waves in a
  pde–ode coupled model of cellulolytic biofilms with nonlinear diffusion,
  Journal of Dynamics and Differential Equations.

\bibitem{Mu-02}
J.~D. Murray, Mathematical biology. {I}, vol.~17 of Interdisciplinary Applied
  Mathematics, 3rd ed., Springer-Verlag, New York, 2002, an introduction.

\bibitem{Maini-JDE95}
F.~S\'{a}nchez-Gardu\~{n}o, P.~K. Maini, Travelling wave phenomena in some
  degenerate reaction-diffusion equations, J. Differential Equations 117~(2)
  (1995) 281--319.

\bibitem{SMGA-01}
R.~A. Satnoianu, P.~K. Maini, F.~S. Garduno, J.~P. Armitage, Travelling waves
  in a nonlinear degenerate diffusion model for bacterial pattern formation,
  Discrete Contin. Dyn. Syst. Ser. B 1~(3) (2001) 339--362.

\bibitem{Maini-new}
F.~Sánchez-Garduño, P.~Maini, E.~Kappos, A review on travelling wave
  solutions of one-dimensional reaction diffusion equations with non-linear
  diffusion term, Forma 11 (1996) 45--59.

\end{thebibliography}
\bigskip\Addresses
}

\end{document}